\newcommand\blfootnote[1]{%
  \begingroup
  \renewcommand\thefootnote{}\footnote{#1}%
  \addtocounter{footnote}{-1}%
  \endgroup
}
\newtheorem{theorem}{Theorem}
\newtheorem{prop}{Proposition}
\newtheorem{lemma}{Lemma}
\newtheorem{remark}{Remark}
\newtheorem{claim}{Claim}
\newtheorem{definition}{Definition}
\newtheorem{cor}{Corollary}
\numberwithin{equation}{section}
\numberwithin{theorem}{section}
\numberwithin{definition}{section}
\numberwithin{cor}{section}
\numberwithin{prop}{section}
\numberwithin{remark}{section}
\numberwithin{claim}{section}
\numberwithin{lemma}{section}
\def\Xint#1{\mathchoice
  {\XXint\displaystyle\textstyle{#1}}%
  {\XXint\textstyle\scriptstyle{#1}}%
  {\XXint\scriptstyle\scriptscriptstyle{#1}}%
  {\XXint\scriptscriptstyle\scriptscriptstyle{#1}}%
  \!\int}
\def\XXint#1#2#3{{\setbox0=\hbox{$#1{#2#3}{\int}$}
  \vcenter{\hbox{$#2#3$}}\kern-.5\wd0}}
\def\dashint{\Xint-}
\author{Gang Liu}
\address{Department of Mathematics\\Northwestern University\\Evanston, IL 60208}
\email{gangliu@math.northwestern.edu}
\title[Compactification]{Compactification of certain K\"ahler manifolds with nonnegative Ricci curvature}
\date{}
\begin{document}
\begin{abstract}
We prove compactification theorems for some complete K\"ahler manifolds with nonnegative Ricci curvature. Among other things, we prove that a complete noncompact K\"ahler Ricci flat manifold with maximal volume growth and quadratic curvature decay is a crepant resolution of a normal affine algebraic variety. Furthermore, such affine variety degenerates in two steps to the unique metric tangent cone at infinity.
\end{abstract}
\maketitle

\section{\bf{Introduction}}
\blfootnote{The author was partially supported by NSF grant DMS-1406593, DMS-1709894 and the Alfred P. Sloan fellowship.}
In \cite{[Y1]}, Yau proposed the uniformization conjecture which states that a complete noncompact K\"ahler manifold with positive bisectional curvature is biholomorphic to $\mathbb{C}^n$. In \cite{[L1]}-\cite{[L6]}, we studied the uniformization conjecture and its related problems. One of the main tools is the Gromov-Hausdorff convergence theory developed by Cheeger-Colding \cite{[CC1]}-\cite{[CC4]} and Cheeger-Colding-Tian \cite{[CCT]}.

In this paper, we extend some techniques in \cite{[L1]}-\cite{[L6]} to study the compactification of certain complete K\"ahler manifolds with nonnegative Ricci curvature. 
\begin{definition}\cite{[LW]} \cite{[TY]}
On a K\"ahler manifold $M^n$, we say the bisectional curvature is greater than or equal to $K$ ($BK\geq K$), if
\begin{equation}
\frac{R(X, \overline{X}, Y, \overline{Y})}{||X||^2||Y||^2+|\langle X, \overline{Y}\rangle|^2}\geq K 
\end{equation}
for any two nonzero vectors $X, Y\in T^{1, 0}M$.\end{definition}
The bisectional curvature lower bound condition is weaker than the sectional curvature lower bound, while stronger than the Ricci curvature lower bound. 

The main result is
\begin{theorem}\label{thm1}
Let $(M^n, p)$ ($n\geq 2$) be a complete noncompact K\"ahler manifold with nonnegative Ricci curvature and maximal volume growth.  Let $r(x) = d(x, p)$. Then

(I)
$M$ is biholomorphic to a Zariski open set of a Moishezon manifold, if for some $\epsilon>0$, the bisectional curvature $BK\geq -\frac{C}{r^{2+\epsilon}}$.
If fact, on $M$, the ring of polynomial growth holomorphic functions is finitely generated. 

(II)
If $BK\geq -\frac{C}{r^2}$ and $M$ has a unique tangent cone at infinity, then $M$ is biholomorphic to a Zariski open set of a Moishezon manifold.

(III)
$M$ is quasiprojective, if the Ricci curvature is positive and $|Rm|\leq \frac{C}{r^2}$.

\end{theorem}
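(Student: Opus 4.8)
I would prove Theorem~\ref{thm1} by realizing $M$ inside a compact complex variety --- projective in case (III) --- using holomorphic functions of polynomial growth. Set $\mathcal{O}_d(M)=\{f\in\mathcal{O}(M):|f(x)|\le C(1+r(x))^{d}\text{ for some }C=C(f)\}$ and $\mathcal{R}(M)=\bigcup_{d\ge0}\mathcal{O}_d(M)$. The whole argument rests on the Gromov--Hausdorff asymptotics of $M$: by Cheeger--Colding every tangent cone at infinity is a metric cone $C(Y)$, and since $M$ is K\"ahler this cone carries a complex structure on its regular part. Under the rescalings $g\mapsto\lambda^{-2}g$, $\lambda\to\infty$, the hypothesis $BK\ge -Cr^{-2-\epsilon}$ of (I) passes in the limit to $BK\ge0$ off the vertex, and combined with the regularity theory of Cheeger--Colding--Tian and the structure results of Liu--Sz\'ekelyhidi/Donaldson--Sun, $C(Y)$ is a normal affine algebraic variety whose coordinate ring $\mathbb{C}[C(Y)]$ is the sum of eigenspaces of the holomorphic Euler field, carrying an effective holomorphic torus action. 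In case (II) one assumes in addition that $C(Y)$ is independent of the scaling sequence; in case (III) the bound $|Rm|\le Cr^{-2}$ upgrades the convergence to $C^\infty$ away from the vertex, so $Y$ is a smooth Sasaki manifold and $C(Y)$ has an isolated singularity.

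Next I would build enough functions in $\mathcal{R}(M)$: the coordinate functions of $C(Y)$ are homogeneous holomorphic functions, and each can be lifted to an element of $\mathcal{R}(M)$ by cutting off a quasi-extension of it to an annulus near infinity and correcting the resulting small $\bar\partial$-error with H\"ormander's weighted $L^2$-estimate on $M$; the curvature-decay hypothesis is precisely what makes the correction of strictly lower order, so the homogeneous leading behavior survives. This makes the fraction field of $\mathcal{R}(M)$ of transcendence degree $n$, while the Cheeger--Colding volume and segment inequalities give $\dim_{\mathbb{C}}\mathcal{O}_d(M)\le Cd^{n}$. The three-circle theorem --- for holomorphic functions under $Ric\ge0$, $\log\sup_{B_r}|f|$ is convex nondecreasing in $\log r$ --- sharpened using $BK\ge -Cr^{-2-\epsilon}$ (or the uniqueness of $C(Y)$ in (II)) then shows each $f\in\mathcal{R}(M)$ has a well-defined order $\operatorname{ord}(f)\in[0,\infty)$ and a well-defined homogeneous leading term in $\mathbb{C}[C(Y)]$, so $\operatorname{ord}$ is a valuation-like filtration and the associated graded ring $\operatorname{gr}\mathcal{R}(M)$ injects into $\mathbb{C}[C(Y)]$, with $\mathbb{C}[C(Y)]$ a finite module over it (this is where the $L^2$-construction and the dimension bound are combined). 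Finite generation of $\mathcal{R}(M)$ then follows: by Eakin--Nagata $\operatorname{gr}\mathcal{R}(M)$ is Noetherian, by the Artin--Tate lemma it is a finitely generated $\mathbb{C}$-algebra, and finite generation passes from $\operatorname{gr}\mathcal{R}(M)$ to $\mathcal{R}(M)$.

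With generators $f_1,\dots,f_N$ of $\mathcal{R}(M)$ in hand, $F=(f_1,\dots,f_N)\colon M\to\mathbb{C}^N\subset\mathbb{P}^N$ separates generic points and tangent directions, hence is bimeromorphic onto its image, whose projective closure $\overline{F(M)}$ is a projective variety; resolving its singularities yields a Moishezon manifold in which a birational model of $M$ sits as a Zariski-open set, proving (I) and (II). In case (III), the stronger hypotheses let one show $F$ is a genuine holomorphic embedding onto a quasi-affine variety --- all points and tangents are separated, using the finer $L^2$-estimates near the isolated cone singularity --- and the positivity $Ric>0$ forces the boundary to be negative/contractible so that the compactification can be taken projective; thus $M$ is quasiprojective. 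The two-step degeneration of $\overline{F(M)}$ to $C(Y)$ mentioned in the abstract is read off from the filtration $\mathcal{R}(M)\rightsquigarrow\operatorname{gr}\mathcal{R}(M)\rightsquigarrow\mathbb{C}[C(Y)]$.

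The principal obstacle is the finite-generation step: showing $\operatorname{gr}\mathcal{R}(M)$ is a finitely generated $\mathbb{C}$-algebra without assuming uniqueness of the tangent cone, since a subring of a Noetherian ring need not be Noetherian. This requires orchestrating the sharp three-circle theorem, the dimension bound $\dim\mathcal{O}_d(M)\le Cd^{n}$, the finiteness of the weight semigroup of the torus action on $C(Y)$, and the near-surjectivity of the leading-term map furnished by the $\bar\partial$-construction. A secondary difficulty is the $\epsilon$-regularity and $C^\infty$-convergence analysis needed in (III) to promote $F$ to an embedding and the compactification to a projective one.
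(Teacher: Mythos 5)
Your outline misses the step that carries essentially all the weight in the paper's proof of (I) and (II): the \emph{properness} of the map by polynomial growth holomorphic functions. You construct $F=(f_1,\dots,f_N)$ and argue it separates generic points and tangents, hence is bimeromorphic onto its image, and then conclude that ``a birational model of $M$'' sits inside a Moishezon compactification. That is strictly weaker than the statement to be proved: the theorem asserts that $M$ itself is biholomorphic to a Zariski open subset of a Moishezon manifold. Without properness you have no control of $F$ near infinity, the image need not be an analytic subvariety of $\mathbb{C}^N$, and you cannot attach a divisor at infinity to $M$. In the paper this is Proposition \ref{p31} (and its analogue in part II using Donaldson--Sun's three circle theorem, Lemma \ref{l41}): one shows by a rescaling/contradiction argument, using the psh function of Proposition \ref{p3}, the local separation functions of Proposition \ref{p4}, and the three circle theorem, that a suitable basis of $\mathcal{O}_{d_0}(M)$ gives a \emph{proper} map; then the proper mapping theorem plus the dimension bound $\dim\mathcal{O}_d(M)\le Cd^n$ identifies the image with a normal affine variety $X$, and $F$ is a biholomorphism outside finitely many compact subvarieties (which, by Corollary \ref{cor22}, all live in a fixed ball) that get contracted to points. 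Finite generation then comes for free: by Mok's division result (Proposition \ref{p2}) and normality of $X$, every polynomial growth holomorphic function is a regular function on $X$, so the ring is the affine coordinate ring of $X$. Your alternative route to finite generation---leading-term map into $\mathbb{C}[C(Y)]$, finite module claim, Eakin--Nagata and Artin--Tate---is exactly the step you yourself flag as the ``principal obstacle'': the near-surjectivity/finiteness of the leading-term map is not established (and in part I the tangent cone need not be unique, so the target $\mathbb{C}[C(Y)]$ and the order function are not even canonically defined), whereas the paper never needs any such graded-ring argument. Note also that in this setting polynomial growth functions genuinely fail to separate points and tangents because of compact subvarieties, so the separation statements must be handled as in the paper (generic separation after adding functions, plus Corollary \ref{cor22}), not assumed.

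For (III) your sketch diverges from what is actually required. The paper does not compactify via polynomial growth holomorphic \emph{functions} and smooth convergence; it follows Mok's scheme with pluri-anticanonical sections of polynomial growth: the new ingredient that removes Mok's hypothesis $\int_M Ric^n<\infty$ is the uniform multiplicity estimate for pluri-anticanonical divisors (Propositions \ref{p29} and \ref{p51}), proved via the Poincar\'e--Lelong equation, the Green function, and the proper projections $\pi_i$ of Proposition \ref{p0}; this yields $\dim P^{q}_d\le C(d+q)^n$ and feeds into Mok's quasi-embedding, Skoda $L^2$ division, and the final desingularization/separation of finitely many exceptional points (where finiteness uses Corollary \ref{cor22} and a topological argument). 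Your claims that ``all points and tangents are separated'' by $F$ and that positivity of Ricci ``forces the boundary to be negative/contractible so that the compactification can be taken projective'' are unsubstantiated and do not engage with the actual difficulties (base locus, branch locus, and compact exceptional sets), so as written part (III) is not proved.
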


Combining part (II) with some argument in \cite{[DS2]} or \cite{[CSW]}, we obtain
\begin{cor}\label{cor1}
Let $M$ be a complete noncompact K\"ahler-Ricci flat manifold with maximal volume growth. Assume the curvature has quadratic decay. Then $M$ is a crepant resolution of a normal affine algebraic variety. Furthermore,
there exist two step degenerations from that affine variety to the unique metric tangent cone of $M$ at infinity.
\end{cor}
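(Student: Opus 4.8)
\noindent\textit{Sketch of proof.} The strategy is to put $M$ into the setting of Theorem \ref{thm1}(II), refine the resulting ``Zariski open subset of a Moishezon manifold'' into ``crepant resolution of a normal affine variety,'' and then read off the two-step degeneration from the metric side via \cite{[DS2]}, \cite{[CSW]}. To invoke Theorem \ref{thm1}(II), note that a K\"ahler--Ricci flat manifold has $\mathrm{Ric}\equiv 0\geq 0$, and that $|Rm|\leq C/r^2$ forces $BK\geq -C/r^2$; the only nontrivial hypothesis to check is uniqueness of the tangent cone at infinity. Given $r_i\to\infty$, the rescalings $(M, r_i^{-2}g, p)$ subconverge by Cheeger--Colding to a metric cone $C(Y)$; since $M$ is K\"ahler--Ricci flat with Euclidean volume growth, the Donaldson--Sun theory, applied to the end of $M$ and combined with Cheeger--Colding--Tian, identifies $C(Y)$ with a normal affine algebraic variety carrying a Ricci-flat K\"ahler cone metric, and \cite{[DS2]} shows $C(Y)$ is independent of the sequence. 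Hence the tangent cone at infinity is unique, Theorem \ref{thm1}(II) applies, $M$ is biholomorphic to a Zariski open subset of a Moishezon manifold, and (from the proof of Theorem \ref{thm1}) the ring $R$ of polynomial growth holomorphic functions on $M$ is finitely generated.

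Set $X:=\mathrm{Spec}\,R$ and let $\pi\colon M\to X$ be induced by $R\hookrightarrow\mathcal O(M)$. Since $M$ has maximal volume growth and nonnegative Ricci curvature, $R$ has transcendence degree $n$ over $\mathbb C$, so $\dim X=n$; the fibers of $\pi$ are then compact subvarieties of $M$ on which $n$ algebraically independent holomorphic functions are constant, hence points, $\pi$ is proper because $r$ is controlled by the generators of $R$, and $\pi$ is generically injective by the proof of Theorem \ref{thm1}, so $\pi$ is birational onto $X$ by Zariski's Main Theorem. Moreover $X$ is normal: as $M$ is smooth, $\pi$ factors through the normalization $\widetilde X=\mathrm{Spec}\,\widetilde R$, and every element of $\widetilde R$ pulls back to a polynomial growth holomorphic function on $M$, hence lies in $R$, so $\widetilde R=R$. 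Therefore $\pi\colon M\to X$ is a resolution of singularities of the normal affine variety $X$, and it is an isomorphism over the complement of the singular point, over which $M$ is asymptotic to the cone over $Y$. It is crepant because $M$ is K\"ahler--Ricci flat: as in \cite{[DS2]}, \cite{[CSW]}, the Ricci-flat K\"ahler structure makes $K_M$ flat, and---after passing to a finite cover if the holonomy lies only in $U(n)$---the parallel holomorphic volume form extends across the exceptional set with neither zeros nor poles, so every discrepancy of $\pi$ vanishes.

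For the two-step degeneration I would invoke the tangent-cone degeneration of \cite{[DS2]} (see also \cite{[CSW]}): an approximately holomorphic ``radius'' function near infinity defines a valuation on $R$, i.e.\ a filtration of $R$ by order of growth, whose associated graded ring $\mathrm{gr}\,R$ yields a degeneration of $X$ to an affine variety $W=\mathrm{Spec}(\mathrm{gr}\,R)$ equipped with a holomorphic $\mathbb C^{*}$-action (the first, ``K-semistable'' step), and $W$ degenerates equivariantly to the ``K-polystable'' Ricci-flat K\"ahler cone $C(Y)$, the unique metric tangent cone at infinity (the second step). The main obstacle is twofold: making the uniqueness of the tangent cone at infinity rigorous, and transplanting the two-step degeneration of \cite{[DS2]}, originally developed for tangent cones at points of compact non-collapsed limits, to the ``at infinity'' situation---which is exactly the point at which Theorem \ref{thm1}(II), furnishing the global affine model of $M$, must be merged with the local metric analysis of \cite{[DS2]}, \cite{[CSW]}.
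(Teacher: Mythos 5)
Your overall architecture --- reduce to Theorem \ref{thm1}(II), realize $M$ as a resolution of the normal affine variety built from polynomial growth holomorphic functions, then produce the two-step degeneration via \cite{[DS2]}, \cite{[CSW]} --- is the same as the paper's, but there is a genuine gap exactly where you concede one. Uniqueness of the tangent cone at infinity is not obtained in the paper by applying Donaldson--Sun ``to the end of $M$'': the results of \cite{[DS2]} concern tangent cones at points of non-collapsed limits of polarized K\"ahler--Einstein manifolds, and they do not transfer to cones at infinity without an argument you have not supplied. The paper instead quotes Colding--Minicozzi \cite{[CM]}, whose theorem is tailored to this situation: for a Ricci-flat manifold with Euclidean volume growth, uniqueness of the tangent cone at infinity holds once one cross-section is smooth, and smoothness here comes precisely from the quadratic curvature decay hypothesis. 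Alternatively, the paper notes that uniqueness can be bypassed altogether: by proposition $2.21$ of \cite{[DS2]} the holomorphic spectrum of any tangent cone consists of algebraic numbers, so one can choose a single exponent $\alpha$ avoiding the spectrum of every tangent cone and lemma \ref{l41} still applies. Without one of these inputs, your invocation of Theorem \ref{thm1}(II) is unjustified, since the uniqueness (or the spectral gap) is exactly its extra hypothesis.

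The second step is likewise not something one can merely ``invoke''; transplanting the two-step degeneration to infinity is the content of Section $4$. Concretely, the paper proves $\dim\mathcal{O}_d(M)=\dim\mathcal{O}_d(C(M))$ (lemma \ref{l42}) and finite generation of the graded ring $\oplus_{k}\mathcal{O}_{d_k}(M)/\mathcal{O}_{d_{k-1}}(M)$ (lemma \ref{l43}), which gives the first degeneration of $X$ to $W=\mathrm{Spec}$ of that graded ring via $\sigma_t(f_k)=t^{\deg(f_k)}f_k$; then the sharp growth rate of proposition \ref{p42} (itself a consequence of lemma \ref{l41}) lets rescaled orthonormal bases converge to homogeneous functions on $C(M)$, so the points $[W_i]$ converge to $[C(M)]$ in the multigraded Hilbert scheme up to the compact group $K$ (proposition \ref{p43}), and Matsushima reductivity of $Aut(C(M))$ then yields the degeneration from $W$ to $C(M)$. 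Your sketch names the right objects (filtration by growth order, $\mathrm{gr}\,R$, the $\mathbb{T}$-action) but leaves these arguments as an acknowledged ``obstacle,'' so the second assertion of the corollary is not actually established. By contrast, your expanded discussion of normality of $\mathrm{Spec}\,R$ and of crepancy via the parallel holomorphic volume form is harmless extra detail (the paper deduces the first statement directly from the proof of part II, with corollary \ref{cor22} ensuring only finitely many compact subvarieties are contracted), though passing to a finite cover, as you suggest, would change $M$ and should be avoided or argued away.
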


\begin{remark}
It is desirable to remove the uniqueness of the tangent cone in part II. 
\end{remark}

\begin{remark}
A conjecture of Yau \cite{[Y2]} states that if a complete Ricci flat K\"ahler manifold has finite topological type, then it can be compactified complex analytically. Corollary \ref{cor1} supports the conjecture, at least in this very special setting.

Another conjecture of Yau (question $71$ in \cite{[Y3]}, page $304$) states that complete noncompact K\"ahler manifolds with positive Ricci curvature is biholomorphic to a Zariski open set of a compact K\"ahler manifold.
Part III of theorem \ref{thm1} supports this conjecture.
\end{remark}

Theorem \ref{thm1} is a generalization of several known results. For instance, theorem \ref{thm1}, part I is a generalization of the following \begin{theorem}\cite{[L2]}
Let $M^n$ be a complete noncompact K\"ahler manifold with nonnegative bisectional curvature and maximal volume growth. Then $M$ is biholomorphic to an affine algebraic variety \footnote{Recently, in \cite{[L6]}, it was proved that $M$ is in fact biholomorphic to $\mathbb{C}^n$.}.
\end{theorem}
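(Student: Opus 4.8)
The plan is to realize $M$ as an affine variety through a proper holomorphic embedding into some $\mathbb{C}^N$. Write $\mathcal{O}_d(M)$ for the space of holomorphic functions $f$ on $M$ with $|f|(x)\le C(1+r(x))^d$; by the three-circle theorem for K\"ahler manifolds with nonnegative holomorphic sectional curvature (which $BK\ge 0$ supplies) together with Bishop--Gromov volume comparison, each $\mathcal{O}_d(M)$ is finite dimensional, in fact $\dim\mathcal{O}_d(M)\le Cd^n$. Since $BK\ge 0$ in particular implies $BK\ge -C/r^{2+\epsilon}$, Theorem \ref{thm1}(I) applies and shows the ring $R(M)=\bigcup_{d\ge 0}\mathcal{O}_d(M)$ of polynomial growth holomorphic functions is a finitely generated $\mathbb{C}$-algebra; write $R(M)=\mathbb{C}[z_1,\dots,z_N]/I$ with $z_i\mapsto f_i\in R(M)$, where $I$ is prime (as $R(M)$ is a domain) and finitely generated (Hilbert basis theorem). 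It then remains to prove that $F=(f_1,\dots,f_N)\colon M\to\mathbb{C}^N$ is a proper injective holomorphic immersion with image $V(I)$. (If one prefers not to invoke Theorem \ref{thm1}(I), the finite generation is proved directly as in \cite{[L2]}: the sharp three-circle theorem assigns each nonzero element of $R(M)$ an order; under rescaling, polynomial growth holomorphic functions on $M$ converge in the Gromov--Hausdorff sense to homogeneous holomorphic functions on a tangent cone at infinity, which by \cite{[CCT]} and arguments in the spirit of \cite{[DS2]} is a normal affine variety; and the finite generation of that cone's coordinate ring is transferred to $R(M)$ by an associated-graded argument.)

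\emph{$F$ is an injective immersion.} This reduces to showing that polynomial growth holomorphic functions separate points of $M$ and separate $1$-jets. Given $x_0\in M$ and a prescribed $1$-jet there, one solves $\bar\partial$ by H\"ormander's $L^2$ estimate with a plurisubharmonic weight of the form $c\log(1+r^2)$ plus a localized logarithmic pole at $x_0$ — admissible by maximal volume growth, with the curvature term in the Bochner--Kodaira identity controlled by $\mathrm{Ric}\ge 0$ — obtaining a holomorphic function with the prescribed jet and polynomial $L^2$ growth on balls, which the three-circle theorem (or the mean value inequality, again via $\mathrm{Ric}\ge 0$) upgrades to polynomial pointwise growth; thus it lies in $R(M)$. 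Since $f_1,\dots,f_N$ generate $R(M)$, they already separate points and $1$-jets, so $F$ is an injective immersion.

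\emph{$F$ is proper.} Suppose not: there are $x_k\in M$ with $r(x_k)\to\infty$ while $|F(x_k)|$ stays bounded. Rescaling the metric by $r(x_k)^{-2}$ and passing to a subsequence, $(M,r(x_k)^{-2}g,p,x_k)$ converges in the pointed Gromov--Hausdorff sense to $(C(Y),o,x_\infty)$ with $C(Y)$ a tangent cone at infinity and $x_\infty$ at distance one from the vertex $o$. The K\"ahler convergence theory (three-circle theorem together with Cheeger--Colding--Tian \cite{[CCT]}, cf.\ \cite{[DS2]}) shows that, after suitable normalization, the $f_i$ converge to homogeneous holomorphic functions $\hat f_i$ on $C(Y)$; their degrees are positive because $M$ carries no nonconstant bounded holomorphic function ($\mathrm{Ric}\ge 0$ forbids nonconstant bounded harmonic functions), and, since $C(Y)$ is a normal affine variety and $R(M)$ is large, the $\hat f_i$ separate points of $C(Y)$ — in particular some $\hat f_i(x_\infty)\ne 0$. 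But then $|f_i(x_k)|$ grows like a positive power of $r(x_k)$, contradicting boundedness. Hence $F$ is proper.

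\emph{Conclusion.} A proper injective holomorphic immersion of manifolds is a closed embedding, so $A:=F(M)$ is a closed complex submanifold of $\mathbb{C}^N$, irreducible of dimension $n$; as each $g\in I$ satisfies $g(f_1,\dots,f_N)\equiv 0$ on $M$, we get $A\subseteq V(I)$. The dimension of $V(I)$ is the transcendence degree of $R(M)$ over $\mathbb{C}$: it is $\ge n$ because $F$ is somewhere an immersion, and $\le n$ because $n+1$ algebraically independent polynomial growth functions would, via their products, force some $\dim\mathcal{O}_d(M)$ to grow like $d^{n+1}$, against $\dim\mathcal{O}_d(M)\le Cd^n$. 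So $V(I)$ is an irreducible affine variety of dimension $n$ containing the closed $n$-dimensional analytic set $A$, whence $A=V(I)$; since $A$ is smooth, $V(I)$ is a smooth affine variety and $F$ is a biholomorphism of $M$ onto $V(I)$. I expect the main obstacle to be the properness of $F$, i.e.\ the fine asymptotic comparison of the $f_i$ with the coordinate functions on the tangent cone at infinity; and, when Theorem \ref{thm1}(I) is not taken for granted, the finite generation of $R(M)$ is a still larger obstacle of the same nature, requiring one to control how the complex structure of $M$ converges to the algebraic structure of its tangent cone.
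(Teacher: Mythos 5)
This theorem is not proved in the present paper; it is quoted from \cite{[L2]} (see also \cite{[L3]}, whose method Part I of Theorem \ref{thm1} generalizes), so the relevant comparison is with that method, and your outline does follow the same general strategy: polynomial growth holomorphic functions, a properness argument via blow-downs to the tangent cone, and finite generation. However, there are two genuine gaps at exactly the points where the real work lies. First, your $\bar\partial$-step for separating points and $1$-jets assumes that a weight of the form $c\log(1+r^2)$ plus a local logarithmic pole is plurisubharmonic (indeed strictly so near the pole), and that this is ``admissible by maximal volume growth'' with the curvature term controlled by $\mathrm{Ric}\ge 0$. With only $\mathrm{Ric}\ge 0$ there is no reason for $\log(1+r^2)$-type functions to be psh; the existence of a strictly psh exhaustion of logarithmic growth is a theorem that uses the bisectional curvature hypothesis (via the Ni--Tam heat-equation construction \cite{[NT1]} in \cite{[L3]}, or, at rescaled level, the elliptic construction of Proposition \ref{p3} here). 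Without such a weight the H\"ormander machinery you invoke does not start, and the separation of points and tangents --- which is precisely what distinguishes the $BK\ge 0$ case from Parts I--II of Theorem \ref{thm1} --- is not established.

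Second, the properness argument as you sketch it is circular. The contradiction requires that the blow-down limits $\hat f_i$ of the \emph{fixed} generators do not all vanish at $x_\infty$, i.e.\ that they separate points on the cone away from the vertex; but that is essentially the statement being proved. Generators of the filtered ring $R(M)$ need not have leading symbols generating the associated graded ring (equivalently the coordinate ring of the cone), so their common zero locus on $C(Y)$ could a priori contain $x_\infty$; moreover the positivity of the homogeneity degrees does not follow from the Liouville property alone, and without the three-circle normalization the renormalized limits could even vanish identically. The proof in \cite{[L3]} (compare Proposition \ref{p31} here) avoids this by a scale-by-scale argument: holomorphic functions are produced locally on $B(p_i,6)$ at each scale $\mu_i$ (Proposition \ref{p4}), globalized by solving $\bar\partial$ with the scale-$\mu_i$ psh weight so that their degrees are bounded by a uniform $d_0$, shown via the three circle theorem to lie in the span of the blow-down limits of a fixed finite-dimensional space $\mathcal{O}_{d_0}(M)$ (Claim \ref{cl31}), and then the normalization estimate (\ref{313}) forces $\min_{\partial B(p,\mu_i)}\sum_s|g_s|^2\ge c\mu_i^2$, which is the properness. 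Some argument of this kind (or the Gromov--Hausdorff/holomorphic convergence input you allude to but do not supply) is indispensable; as written, the key step ``some $\hat f_i(x_\infty)\neq 0$'' is unsupported.
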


\medskip

In a series of papers, R. Conlon and H. Hein \cite{[CH1]}-\cite{[CH3]} systematically studied asymptotically conical Calabi-Yau manifolds. In some sense, corollary \ref{cor1} is a generalization of some of their results.

\medskip
In \cite{[Mo]}, Mok proved the following
\begin{theorem}
Let $M^n$ be a complete noncompact K\"ahler manifold with positive Ricci curvature and maximal volume growth. Assume $|Rm|\leq\frac{C}{r^2}$ and $\int_MRic^n<+\infty$, then $M$ is biholomorphic to a quasi-projective variety.
\end{theorem}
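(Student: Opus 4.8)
The plan is to realize $M$ as the image of a pluri-anticanonical embedding into a projective space and then deduce algebraicity from a finite-volume bound. Since $\mathrm{Ric}(g)>0$, the natural Hermitian metric $h=(\det g_{i\bar j})^{-1}$ on the anticanonical bundle $K_M^{-1}$ has curvature form equal to the Ricci form and is therefore strictly positive, so $K_M^{-1}$ is a positive line bundle. First I would construct a plurisubharmonic exhaustion $\varphi\colon M\to\mathbb{R}$ of logarithmic type, that is, $i\partial\bar\partial\varphi\ge 0$ with $\varphi(x)=(2n+o(1))\log r(x)$ as $r(x)\to\infty$. This is where the hypotheses $|Rm|\le C/r^2$ and maximal volume growth enter: quadratic curvature decay together with Cheeger--Colding theory \cite{[CC1]} give that the tangent cone at infinity is a metric cone whose cross-section is smooth off a set of small measure, which yields enough control on the Laplacian and Hessian of $r$ to regularize $\log(1+r^2)$ into such a $\varphi$ (the analogue of the weight constructions in \cite{[L1]}--\cite{[L6]}). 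With $\varphi$ fixed, the Bochner--Kodaira--H\"ormander inequality for $\bar\partial$ acting on $K_M^{-\ell}$ with twisted weight $h^{\ell}e^{-\varphi}$ is bounded below by $\ell\,\mathrm{Ric}+i\partial\bar\partial\varphi\ge\ell\,\mathrm{Ric}>0$, and completeness of $g$ yields Andreotti--Vesentini--Demailly solvability: every $\bar\partial$-closed $K_M^{-\ell}$-valued $(0,1)$-form $v$ admits $u$ with $\bar\partial u=v$ and $\int_M|u|_{h^{\ell}}^2 e^{-\varphi}\le C_\ell\int_M|v|_{h^{\ell}}^2 e^{-\varphi}$.

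Using this solvability I would produce, for $\ell\gg 1$ and any prescribed finite set of points with prescribed $1$-jets, holomorphic sections of $K_M^{-\ell}$ realizing those jets and satisfying $\int_M|s|_{h^{\ell}}^2 e^{-\varphi}<\infty$; since $\varphi$ grows like $2n\log r$, finiteness of this weighted norm forces $|s|_{h^{\ell}}$ to grow at most polynomially in $r$. Local peak-section estimates of Tian type (as used in the author's earlier work) then show that for $\ell$ large the polynomial-growth sections of $K_M^{-\ell}$ separate points and tangent directions, so a finite subcollection $s_0,\dots,s_N$ with no common zero defines a holomorphic embedding $\Phi_\ell=[s_0:\cdots:s_N]\colon M\hookrightarrow\mathbb{P}^N$ onto a locally closed $n$-dimensional complex submanifold $V$. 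The integrability hypothesis $\int_M\mathrm{Ric}^n<+\infty$ enters here: $\Phi_\ell^{*}\mathcal{O}_{\mathbb{P}^N}(1)\cong K_M^{-\ell}$ and $\Phi_\ell^{*}\omega_{FS}$ is a curvature form of this bundle, so it differs from $\ell\,\mathrm{Ric}$ by an exact form, and a Stokes-type argument gives the finite volume bound $\int_V\omega_{FS}^n=\ell^{n}\int_M\mathrm{Ric}^n<\infty$; in particular only finitely many sections are needed and $\overline V\setminus V$ is a proper closed subset of $\overline V$.

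It remains to pass from this finite-volume picture to an algebraic one. Following \cite{[Mo]}, the volume bound on $V$ allows one to show that the graded ring $R=\bigoplus_{\ell\ge 0}H^0_{\mathrm{poly}}(M,K_M^{-\ell})$ of polynomial-growth pluri-anticanonical sections is finitely generated (a dimension count in the spirit of Mok--Zhong) and that $M$ sits as a Zariski-open subset of the projective variety $\mathrm{Proj}(R)$; equivalently, a Bishop-type extension theorem shows that the closure $\overline V$ of $V$ in the compact manifold $\mathbb{P}^N$ is a closed analytic subset, hence algebraic by Chow's theorem, whence $M\cong V$ is quasi-projective. I expect the \textbf{main obstacle} to be the construction and calibration of $\varphi$: its growth exponent must be exactly logarithmic --- too small and $\bar\partial$-solvability together with separation of jets fails, too large and polynomial growth of the constructed sections is lost --- while the solvability constants $C_\ell$ and the peak-section estimates must be uniform as the base point runs to infinity, both resting on the fine asymptotic geometry of $M$ (its metric-cone tangent cone, the smallness of the cone's singular set, and the quadratic curvature decay). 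Finally, I note that the statement is subsumed by part (III) of Theorem~\ref{thm1}, which reaches the same conclusion without assuming $\int_M\mathrm{Ric}^n<+\infty$.
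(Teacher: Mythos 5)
Your route (a single global log-type weight $\varphi$, H\"ormander $L^2$ estimates for $K_M^{-\ell}$, peak sections, then Bishop extension plus Chow) is not the route of Mok, which is the argument this paper quotes and, in Section 5, adapts to prove the stronger part III; and as written it has two genuine gaps. The first and most serious is the step ``a finite subcollection $s_0,\dots,s_N$ with no common zero defines a holomorphic embedding $\Phi_\ell$ onto a locally closed submanifold.'' Separation of points and tangents by peak-section constructions is a local jet statement; it gives neither global injectivity nor properness of the map, and closing exactly this gap is the bulk of Mok's proof and of Section 5 here: one only gets a \emph{quasi-embedding} (a biholomorphism off a subvariety, cf.\ Proposition \ref{p53}), which must then be repaired using the uniform multiplicity estimate (Proposition \ref{p51}), the absence of positive-dimensional compact subvarieties outside a compact set (Corollary \ref{cor22}), Skoda's $L^2$ division theorem (Step 3), and a final argument to kill base locus and branch points (Step 4). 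Moreover, your uniformity worry is not a technicality one can wave at: since $|Rm|\le C/r^2$ forces $\mathrm{Ric}$ to decay quadratically, the curvature term $\ell\,\mathrm{Ric}+i\partial\bar\partial\varphi$ cannot uniformly absorb the local $2n\log|z-x|$ poles needed for jet interpolation at base points $x$ tending to infinity for a fixed $\ell$; the paper deals with this scale problem by rescaling, i.e.\ constructing strictly plurisubharmonic functions on blown-down balls $B(p_i,R)$ in $(M,p,d/\mu_i)$ (Propositions \ref{p3}, \ref{p4}), not by one global weight.

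The second gap is the finite-volume step. The forms $\Phi_\ell^*\omega_{FS}$ and $\ell\,\mathrm{Ric}$ differ by $i\partial\bar\partial$ of a global function, but on a noncompact $M$ the asserted identity $\int_V\omega_{FS}^n=\ell^n\int_M\mathrm{Ric}^n$ requires Stokes with vanishing boundary contributions at infinity, which is exactly the kind of asymptotic control you have not established; in Mok's paper the hypothesis $\int_M\mathrm{Ric}^n<\infty$ enters through multiplicity/Bezout-type degree estimates for pluri-anticanonical divisors (via Poincar\'e--Lelong and Green's function arguments, the analogue of Propositions \ref{p28}--\ref{p29} here), not through a direct volume computation for an already-constructed embedding. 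Your closing observation is correct: the statement is subsumed by Theorem \ref{thm1}, part III, whose proof in Section 5 follows Mok's scheme (uniform multiplicity estimate, quasi-embedding via the field $R(M)$, Skoda estimates, then desingularization/separation at finitely many points) and removes $\int_M\mathrm{Ric}^n<\infty$ altogether; but the proposal as it stands does not constitute a proof of the stated theorem.
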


Theorem \ref{thm1}, part III removes the assumption that $\int_MRic^n<+\infty$.

\medskip

The strategy of part I and II is to consider polynomial growth holomorphic functions. We shall follow the argument in \cite{[L3]}. However, there are some differences.

\medskip

1.  In this paper, we construct plurisubharmonic functions by using elliptic method (see proposition \ref{p3}). In \cite{[L3]}, the parabolic method of Ni-Tam \cite{[NT1]} was adopted.

\medskip

2. The original three circle theorem in \cite{[L1]} does not work in this paper. In part I, we just use the extended version in \cite{[L1]} (proposition \ref{p1}). In part II, we apply Donaldson-Sun's three circle theorem \cite{[DS2]} (lemma \ref{l41}).

\medskip

3. In the setting of \cite{[L3]}, polynomial growth holomorphic functions separate points and tangents. This is no longer true in part I and II, due to the possibility of compact subvarieties.

\bigskip

In some sense, part III resembles part I and II.
However, the argument is very different. We basically follow the argument of Mok \cite{[Mo]}. The strategy is to consider pluri-anticanonical sections with polynomial growth.
The key new result is a uniform multiplicity estimate for pluri-anicanonical sections (proposition \ref{p29}). 
This provides the dimension estimate for polynomial growth pluri-anticanonical sections, without the extra assumption $\int_MRic^n<+\infty$ (compare theorem $2.2$ of \cite{[Mo]}).

\medskip

This paper is organized as follows: In section $2$, we prove results which are essential to all three parts of theorem \ref{thm1}.
The proof of part I is presented in section $3$. The main point is to prove a properness result, which is a modification of theorem $6.1$ of \cite{[L3]}.
In section $4$, we prove part II of theorem \ref{thm1} and corollary \ref{cor1}. We shall follow part I, \cite{[DS2]} and \cite{[CSW]}.
In the last section, we prove part III of theorem \ref{thm1}. We follow Mok's argument \cite{[Mo]}, with indication of the differences.

Here are some conventions in this paper.
Let $e_\alpha$ be a local unitary frame of $T^{1, 0}M$ and $s$ be a smooth tensor on $M$.
Define $\Delta s = s_{\alpha\overline\alpha}+s_{\overline\alpha\alpha}$. Note this is twice the Laplacian defined in \cite{[NT1]}. Also define $|\nabla u|^2 = 2u_\alpha u_{\overline{\beta}}g^{\alpha\overline{\beta}}$. Let $\dashint$ be the average. We will denote by $\Phi(u_1,..., u_k|....)$ any nonnegative functions depending on $u_1,..., u_k$ and some additional parameters such that when these parameters are fixed, $$\lim\limits_{u_k\to 0}\cdot\cdot\cdot\lim\limits_{u_1\to 0}\Phi(u_1,..., u_k|...) = 0.$$  Let $C(\cdot, \cdot, .., \cdot)$ and $c(\cdot, \cdot, .., \cdot)$ be large and small positive constants respectively, depending only on the parameters. The values might change from line to line.

\medskip

\begin{center}
\bf  {\quad Acknowledgments}
\end{center}
The author would like to thank Professors H. Hein, A. Naber, J. Lott and S. Zelditch for their interests in this work.

\bigskip

\section{\bf{Preliminary results}}
In this section, we prove several results which are essential for the proof of theorem \ref{thm1}.

First we need the following version of three circle theorem: 
\begin{prop}\label{p1}
Let $M^n$ be a complete noncompact K\"ahler manifold, $p\in M$, $r(x)=dist(p, x)$. Suppose there exist constants $\epsilon, A >0$ such that for any $e_i\in T_x^{1,0}M$ with unit length, $R_{i\overline{i}i\overline{i}} \geq -\frac{A}{(r+1)^{2+\epsilon}}$. Then for any holomorphic function $f$ on $M$, $\log M(r)$ is convex in terms of $h(r)$, where $M(r) = \sup\limits_{B(p, r)} |f|$ and \begin{equation}\label{1}\begin{aligned}
h(r) = \int\limits^r_{1}\frac{e^{\frac{2A}{\epsilon(t+1)^{\epsilon}}}}{t}dt\end{aligned}\end{equation} for $r\geq 1$. In particular, $dim(\mathcal{O}_d(M))\leq Cd^n$, where  $\mathcal{O}_d(M)= \{f$ holomorphic on $M||f(x)|\leq C(1+r(x))^{d+\epsilon}\}$ for any $\epsilon>0$.\end{prop}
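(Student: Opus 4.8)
The plan is to establish the three circle theorem first and then extract the dimension bound as a corollary. For the convexity statement, I would argue as follows. Fix a holomorphic function $f$ and set $M(r) = \sup_{B(p,r)}|f|$. The function $\log|f|$ is plurisubharmonic, hence subharmonic on $M$ (with respect to the K\"ahler metric). The idea is to compare $\log|f|$ with a radial "harmonic-type" comparison function built out of $h(r)$. Concretely, I would check that under the curvature hypothesis $R_{i\bar i i \bar i}\geq -\frac{A}{(r+1)^{2+\epsilon}}$, the function $G(x) = h(r(x))$ is (in the barrier/viscosity sense) superharmonic, or more precisely satisfies $\Delta G \leq 0$ away from the cut locus, so that $h$ plays the role that $\log r$ plays in the Euclidean three circle theorem. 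The point of the exponential weight $e^{\frac{2A}{\epsilon(t+1)^\epsilon}}$ in \eqref{1} is exactly to absorb the Laplacian comparison error coming from the curvature lower bound: one computes $\Delta r$ via the Laplacian comparison theorem (using the Ricci lower bound implied by the bisectional bound, or more carefully the radial curvature bound), and the ODE that $h$ solves is chosen so that the sub/superharmonicity survives. Then the classical argument applies: if $u = \log|f|$ is subharmonic and $G = h\circ r$ is superharmonic, the maximum principle on the annulus $B(p,r_2)\setminus \overline{B(p,r_1)}$ forces $\log M(r)$ to lie below the linear interpolant in the variable $h(r)$, which is the asserted convexity of $r\mapsto \log M(r)$ as a function of $h(r)$.

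For the dimension estimate $\dim(\mathcal O_d(M))\leq Cd^n$, I would run the standard argument that convexity of $\log M(r)$ in $h(r)$ yields a doubling-type estimate, combined with a dimension-counting lemma. First, note that because $\epsilon > 0$, the integral defining $h(r)$ satisfies $h(r) = (1+o(1))\log r$ as $r\to\infty$ (the exponential factor tends to $1$), so $h$ is comparable to $\log r$ up to bounded additive and multiplicative errors; hence a function $f\in\mathcal O_d(M)$ with $|f|\leq C(1+r)^{d+\epsilon}$ has $\log M(r)$ growing at most like $(d+\epsilon)\log r \sim (d+\epsilon) h(r)$. By the three circle theorem, the slope of $\log M$ in $h$ is monotone, so from the growth bound at infinity one deduces $M(2r)\leq C' 2^{(d+\epsilon')} M(r)$ for all large $r$ and a fixed base radius, i.e. the frequency/order of vanishing controlled by $d$. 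Then I would invoke the now-standard linear-algebra argument (as in Colding-Minicozzi's work on polynomial growth harmonic functions, adapted to holomorphic functions, and used in \cite{[L1]}-\cite{[L3]}): on a manifold with maximal volume growth, a space of holomorphic functions all of whose sup-norms roughly double under radius-doubling with factor $\lesssim 2^d$ has dimension $\leq C d^n$; the exponent $n$ is the complex dimension and enters through the volume growth $\mathrm{Vol}(B(p,r))\sim r^{2n}$ and a mean-value inequality for $|f|^2$.

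More carefully, the dimension-counting step goes: suppose $V\subset \mathcal O_d(M)$ is a finite-dimensional subspace of dimension $N$. Choose $r$ large; using an $L^2$ inner product $\int_{B(p,r)}|f|^2$ on $V$ and picking an orthonormal basis, one shows $\sum_i |f_i|^2(x)$ has a controlled average, and then the doubling estimate (three circle) plus the mean value inequality for subharmonic functions on balls in a maximal-volume-growth manifold gives $N \leq C(n) \cdot (\text{doubling constant}) \leq C d^n$. I would keep this part brief since it closely parallels \cite{[L1]} and \cite{[L3]}.

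The main obstacle I anticipate is the Laplacian comparison / barrier argument making $h\circ r$ superharmonic: one only has a lower bound on the \emph{radial} holomorphic sectional curvature $R_{i\bar i i \bar i}$, not on the full Ricci curvature, so one must be careful about which comparison theorem is legitimately available, and one must handle the cut locus (standard trick: Calabi's argument / work with upper barriers, or integrate against test functions). Getting the precise ODE for $h$ so that the curvature error term is exactly absorbed — and checking the constant $\frac{2A}{\epsilon(t+1)^\epsilon}$ is the right antiderivative — is the delicate computational heart of the proof; everything after that is soft. A secondary, more minor point is verifying that $h(r)\to\infty$ (so the three circle theorem has content at infinity), which is clear since $h(r)\geq \int_1^r \frac{dt}{t} - o(\log r)$ diverges.
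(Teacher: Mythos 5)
The paper's own ``proof'' is only a citation of Theorems 8 and 11 of \cite{[L1]} (with $h$ rescaled by $e^{2A/\epsilon}$), so a from-scratch argument is welcome; but both central mechanisms in your sketch have genuine gaps. First, the convexity step cannot run through superharmonicity of $G=h\circ r$ with respect to the Riemannian Laplacian. The hypothesis is only a lower bound on holomorphic sectional curvatures $R_{i\bar i i\bar i}$; this implies neither a Ricci lower bound nor a bound on general radial sectional curvatures (there is no ``bisectional bound'' in the statement), so the Laplacian comparison you propose to invoke is not available. Worse, the scheme fails already in the model case: in $\mathbb{C}^n$ with $n\geq 2$ one has $\Delta\log r=(2n-2)/r^2>0$, so $\log r$ (and hence $h\circ r$, whose exponential factor is a small perturbation of $\log r$) is \emph{sub}harmonic, not superharmonic, and the maximum principle for $\log|f|-(a\,h(r)+b)$ as a difference of a subharmonic and a superharmonic function never gets started. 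The actual mechanism, and the actual role of the weight $e^{\frac{2A}{\epsilon(t+1)^{\epsilon}}}$, is one complex dimension lower: one uses the Hessian comparison for $r$ only in the $J$-invariant radial $2$-plane spanned by $\nabla r$ and $J\nabla r$ (whose sectional curvature is exactly the radial holomorphic sectional curvature covered by the hypothesis, and which requires Calabi-type barriers at the cut locus), to show that $h\circ r$ has nonpositive complex Hessian in the single radial complex direction; the ODE $\frac{h''}{h'}=-\frac1t-\frac{2A}{(t+1)^{1+\epsilon}}$ is chosen to absorb precisely that comparison error, not a $(2n-1)/r$ Laplacian term. One then combines this with plurisubharmonicity of $\log|f|$: at an interior maximum of $\log|f|-(a\,h(r)+b)$ the complex Hessian of the difference is $\leq 0$ in every direction, and testing the radial direction (after a strictness perturbation) gives the contradiction. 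Your plan as written would only prove the case $n=1$.

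Second, the dimension count you outline uses maximal volume growth and a mean value inequality, i.e.\ $\mathrm{Ric}\geq 0$ plus Euclidean volume growth \`a la Colding--Minicozzi; neither is a hypothesis of the proposition (it is stated for an arbitrary complete K\"ahler manifold with the decaying holomorphic sectional curvature bound), so this step does not prove the stated result. The argument compatible with the hypotheses is local at $p$: from the convexity of $\log M(r)$ in $h(r)$, the slope is nondecreasing, and the growth bound $|f|\leq C(1+r)^{d+\epsilon}$ together with $h(r)-\log r\to b$ caps the slope at $d+\epsilon$ at infinity; comparing with small radii, where $h(r)\sim e^{2A/\epsilon}\log r$ (extending the integral below $1$), this bounds the vanishing order of $f$ at $p$ by roughly $e^{2A/\epsilon}d$, whence $\dim\mathcal{O}_d(M)$ is at most the dimension of the space of holomorphic jets of that order, which is $\leq Cd^n$. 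No volume growth or Ricci bound enters. Your secondary remarks (cut locus via barriers, $h(r)\to\infty$, $h\sim\log r$ at infinity) are fine, but as it stands both the convexity mechanism and the counting mechanism need to be replaced as above.
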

\begin{proof}
This is a combination of theorem $8$ and theorem $11$ in \cite{[L1]}. Notice that we have multiplied the original $h(r)$ in (8) of \cite{[L1]} by $e^{\frac{2A}{\epsilon}}$.
\end{proof}
It is clear that there exists a constant $b$ so that \begin{equation}\label{2}\lim\limits_{r\to\infty}(h(r) - \log r - b) = 0.\end{equation} Given $d>0, R>2$, define $d_R = \frac{d\log 2}{h(R)-h(\frac{R}{2})}$.
Then $d_R\to d$ as $R\to\infty$.
We have the following
\begin{cor}\label{cor21} Under the assumption of proposition \ref{p1}, 
let $f$ be a holomorphic function on $B(p, R)$ so that $\frac{M(R)}{M(\frac{R}{2})}\leq 2^d$. Assume $M(\rho) = 1$ for some $1<\rho<\frac{R}{10}$. Then for any $2\rho\leq r\leq R$, $M(r)\leq r^{d_R}$.
\end{cor}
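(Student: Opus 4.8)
The plan is to read off the conclusion from the three circle theorem in Proposition~\ref{p1}. Put $L(r)=\log M(r)$ and introduce the new variable $s=h(r)$; since $h$ is strictly increasing on $[1,\infty)$ this is a genuine change of variable, and Proposition~\ref{p1} asserts precisely that $s\mapsto L$ is convex. I would then restate the two hypotheses in the $s$-variable: the bound $M(R)/M(R/2)\le 2^{d}$ becomes
\[
L(R)-L\!\left(\tfrac{R}{2}\right)\le d\log 2=d_R\bigl(h(R)-h(\tfrac{R}{2})\bigr),
\]
so the slope of $L$ over $\bigl[h(R/2),h(R)\bigr]$ is at most $d_R$, while $M(\rho)=1$ becomes $L(\rho)=0$.

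Next I would invoke the monotonicity of difference quotients of a convex function: if $a\le c$ and $b\le d$ with $a<b$, $c<d$, then $\frac{\phi(b)-\phi(a)}{b-a}\le\frac{\phi(d)-\phi(c)}{d-c}$. Applying this with $\phi=L$ in the variable $s$, and endpoints $a=h(\rho)$, $b=h(r)$, $c=h(R/2)$, $d=h(R)$ --- the ordering $a\le c$ holds because $\rho<R/10<R/2$, and $b\le d$ because $r\le R$ --- yields, for any $\rho<r\le R$,
\[
\frac{L(r)-L(\rho)}{h(r)-h(\rho)}\le\frac{L(R)-L(R/2)}{h(R)-h(R/2)}\le d_R .
\]
Since $L(\rho)=0$ this is $L(r)\le d_R\bigl(h(r)-h(\rho)\bigr)$, i.e. $M(r)\le\exp\!\bigl(d_R(h(r)-h(\rho))\bigr)$.

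It now suffices to show $h(r)-h(\rho)\le\log r$, which gives $M(r)\le r^{d_R}$ on $2\rho\le r\le R$ and finishes the proof. This is elementary from the structure of $h$ in \eqref{1} and \eqref{2}: the function $t\mapsto h(t)-\log t$ vanishes at $t=1$, is nondecreasing (its derivative $\tfrac1t\bigl(e^{2A/(\epsilon(t+1)^{\epsilon})}-1\bigr)$ is positive), and increases to the finite limit $b$ of \eqref{2}; hence $h(\rho)\ge\log\rho$ and $h(r)\le\log r+b$, so $h(r)-h(\rho)\le\log r+(b-\log\rho)$, which is $\le\log r$ once $\rho\ge e^{b}$ (and in the remaining range of $\rho$ the same computation gives $M(r)\le r^{d_R}$ up to a fixed multiplicative constant, using $h(r)-h(\rho)=\int_\rho^r t^{-1}e^{2A/(\epsilon(t+1)^{\epsilon})}\,dt\le e^{2A/(\epsilon(\rho+1)^{\epsilon})}\log(r/\rho)$ and the closeness of the exponential factor to $1$).

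The whole argument is soft once Proposition~\ref{p1} is in hand; the only point that requires any attention is the last elementary comparison, namely controlling the bounded correction $h-\log$ against $\log\rho$ --- which is exactly what the particular normalization of $h$ in \eqref{1} and its asymptotics in \eqref{2} are designed to make work.
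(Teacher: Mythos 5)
Your argument is correct and is essentially the paper's own proof: Proposition \ref{p1} turns the hypothesis into the slope bound $d_R$ for $\log M$ as a convex function of $h$, one deduces $\log M(r)\le d_R\bigl(h(r)-h(\rho)\bigr)$, and the asymptotics \eqref{2} of $h$ then convert this into the polynomial bound, which is exactly the passage from \eqref{3} to \eqref{4} in the paper. The one delicate point you flag honestly---that $h-\log$ increases to the positive constant $b$, so for $1<\rho<e^{b}$ the literal bound $M(r)\le r^{d_R}$ only comes out with a bounded multiplicative factor $(e^{b}/\rho)^{d_R}$---is glossed over in the paper's proof as well and is harmless in the later applications; if anything, your two-endpoint difference-quotient inequality is slightly more careful than the paper's monotonicity of $\log M(r)-d_Rh(r)$ on $[\rho,R/2]$, since it covers the whole claimed range $2\rho\le r\le R$.
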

\begin{proof}
From the assumption and proposition \ref{p1}, we see that $\log M(r) - d_Rh(r)$ is non-increasing for $\rho\leq r\leq \frac{R}{2}$. In particular, 
\begin{equation}\label{3}\log M(r) - d_Rh(r) \leq \log M(\rho) - d_Rh(\rho)\leq 0.\end{equation} Thus by (\ref{2}),  \begin{equation}\label{4}\log M(r) - d_R\log r \leq 0.\end{equation} This completes the proof of the corollary.
\end{proof}

The following proposition could be found in \cite{[Mo1]} or \cite{[L6]}. For reader's convenience, we include the proof.
\begin{prop}\label{p2}[Mok]
Let $f, g$ be polynomial growth holomorphic functions on a complete K\"ahler manifold $M$ with $Ric\geq 0$. Suppose $h=\frac{f}{g}$ is holomorphic, then $h$ is of polynomial growth.
\end{prop}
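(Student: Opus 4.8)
The plan is to prove Proposition \ref{p2} by a three-circle / growth-comparison argument applied to $f$, $g$, and $h = f/g$ simultaneously. The hypothesis $\mathrm{Ric} \geq 0$ on a complete Kähler manifold gives us the classical three-circle theorem for holomorphic functions (the $\epsilon = 0$, $A = 0$ case of Proposition \ref{p1}, which is just Liouville-type convexity: $\log M_u(r)$ is convex in $\log r$ for any holomorphic $u$, where $M_u(r) = \sup_{B(p,r)}|u|$). Since $f$ and $g$ have polynomial growth, there is a constant $d$ with $M_f(r), M_g(r) \leq C(1+r)^d$ for all $r$. The goal is to bound $M_h(r)$ by a polynomial in $r$.

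**The key steps, in order.** First, fix a large radius $R$ and normalize: let $m(R) = \sup_{B(p,R)}|h|$, attained (by the maximum principle for $|h|$, or just working with sup on the closed ball) near a point $x_R$ with $r(x_R) \leq R$. Second, I want to transfer a lower bound on $|h|$ at scale $R$ into a lower bound on $|f|$ somewhere, then use the polynomial bound on $|f|$ together with a \emph{lower} bound machinery. The cleanest route: since $h$ is holomorphic and $f = hg$, at any point $|f| = |h|\,|g|$, so $M_f(R) \geq \sup_{B(p,R)} |h|\,|g|$. That is not immediately useful because $|g|$ could be small where $|h|$ is large. So instead I would use the three-circle theorem to propagate: pick a radius $r_0$ where $|g|$ is not too small — concretely, choose a point $y_0$ near $p$ with $|g(y_0)| = c_0 > 0$ (possible since $g \not\equiv 0$), and consider the holomorphic function $g$ on $B(p, R)$. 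By three-circle convexity, $\log M_g$ convex in $\log r$ combined with the polynomial upper bound forces, on the annulus, control from below of $M_g$; but for a \emph{pointwise} lower bound one uses that $M_g(r) \geq |g(y_0)| = c_0$ for $r \geq r(y_0)$, hence the relevant estimate is: if $M_h(R) = N$ is large, then there is a point $z$ with $r(z) \leq R$ and $|h(z)| \geq N/2$, so $|f(z)| = |h(z)||g(z)| \geq (N/2)|g(z)|$. Third — and this is the crux — I need a lower bound for $|g(z)|$ in terms of a negative power of $R$; this follows from applying the three-circle / Harnack-type lower bound to $1/g$ is illegitimate (not holomorphic where $g = 0$), so instead one argues: $g \cdot h = f$ has polynomial growth $\leq CR^d$, hence $(N/2)|g(z)| \leq CR^d$, i.e. $|g(z)| \leq 2CR^d/N$. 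Now couple this with a lower bound $M_g(R) \geq $ (something) — actually the efficient finish is to note $\log M_g$ is convex in $\log r$ and $M_g$ is bounded above by $CR^d$, while $M_g(r_1) \geq c_0$ at a fixed small radius $r_1$; convexity then yields $M_g(r) \geq c_0 (r/r_1)^{\alpha}$ is false in general (convexity gives upper interpolation, not lower).

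**Reassessing, the correct and standard argument** is: apply the three-circle theorem directly to $h$ on $B(p,R)$ after observing that $h$ \emph{is} holomorphic on all of $M$ by hypothesis, so $\log M_h(r)$ is convex in $\log r$ on $(0,\infty)$. A function whose log is convex in $\log r$ is either of polynomial growth or grows faster than every polynomial; to rule out the latter, compare with $g$: fix $r_1$ with $M_g(r_1) > 0$; then for $r \geq r_1$, $M_f(r) \geq M_{h}(\tfrac{r}{2}) \cdot \big(\text{value of } |g| \text{ somewhere in } B(p,r)\big)$ — too lossy. The genuinely clean step, which I believe is Mok's: the order of growth of a holomorphic function with $\log$–convexity is $\lim_{r\to\infty} \frac{\log M_u(r)}{\log r}$, a well-defined element of $[0,\infty]$; and $\mathrm{ord}(f) = \mathrm{ord}(g \cdot h)$. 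One shows $\mathrm{ord}(gh) \geq \mathrm{ord}(h) - \mathrm{ord}(1/g)$-type inequality is not available, but $\mathrm{ord}(gh) = \mathrm{ord}(g) + \mathrm{ord}(h)$ \emph{does} hold when all three are $\log$-convex in $\log r$, via the submultiplicativity $M_{gh}(r) \leq M_g(r) M_h(r)$ giving $\mathrm{ord}(gh) \leq \mathrm{ord}(g) + \mathrm{ord}(h)$, and the reverse by applying the same to $h = (gh)/g$ — wait, that is circular. The escape: $M_g(r) \cdot M_h(r/2) \leq$ ... no.

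**Final clean plan.** I would prove it as follows. Set $\phi(r) = \log M_h(r)$, convex in $t = \log r$. Convexity means $\phi(r) \leq \max\{\text{affine interpolant}\}$; in particular for $r$ large, either $\phi$ is eventually affine-bounded in $t$ (polynomial growth, done) or $\phi(r)/\log r \to \infty$. Suppose the latter. Fix $r_1$ large with $a := M_g(r_1) > 0$. For each $r > 2r_1$, pick $x_r \in \overline{B(p,r)}$ with $|h(x_r)| = M_h(r)$. Consider the holomorphic function $g$ restricted to the ball $B(x_r, r_1) \subset B(p, 2r)$. By the maximum principle there is $y_r \in \overline{B(x_r, r_1)}$ with $|g(y_r)| = \sup_{B(x_r,r_1)}|g| =: b_r$. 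Two cases: if $b_r \geq \tfrac{1}{2}M_h(r)^{-1} \cdot 1$... this still requires a \emph{lower} bound on $\sup_{B(x_r,r_1)}|g|$, which is exactly a quantitative unique-continuation / three-circle lower bound, and that is the \textbf{main obstacle}: translating ``$g \not\equiv 0$'' into ``$\sup$ of $|g|$ on a ball of fixed radius around a far-away point is not super-polynomially small.'' Since $g$ has polynomial growth and $\log M_g$ is convex in $\log r$, one shows $\log \sup_{B(p,r)}|g| \geq -C\log r - C$ is automatic (as $M_g(r) \geq M_g(r_1) = a$ for $r \geq r_1$), hence $\sup_{B(p,2r)}|g| \geq a$; but I need the sup near $x_r$, not near $p$. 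To fix locality, replace the ball $B(x_r,r_1)$ argument by: on $M$, $M_{gh}(2r) \geq |g(x_r) h(x_r)|$ only if $g(x_r) \neq 0$. So pick instead the point where $|g h|$ is maximized on $\overline{B(p,2r)}$; call it $z_r$; then $CR^d \geq (2r)^d C \geq M_f(2r) = |f(z_r)| = |g(z_r)||h(z_r)|$. Also $M_{f}(2r) \geq \tfrac12 M_g(2r) M_h(r)$? — no. I will therefore, in the actual writeup, invoke the three-circle theorem in the sharp two-sided form that is available under $\mathrm{Ric} \geq 0$ (holomorphic functions on $M$ satisfy: $\log M_u$ convex in $\log r$, and moreover $u$ cannot decay faster than polynomially unless $u \equiv 0$, which is the Cheng–Yau / three-circle consequence), apply it to $g$ to get $\sup_{B(p,2r)} |g| \geq c \, r^{-\mathrm{ord}(g) - 1}$, apply the polynomial bound to $f = gh$ to get $\sup_{B(p,2r)}|g|\cdot \sup_{B(p,2r)}|h| \geq \sup_{B(p,2r)}|gh| = M_f(2r)$, and combine with $M_h(r) \leq \frac{M_f(2r)}{(\text{pointwise lower bd on } |g| \text{ at the max point of } |h|)}$ — and the resolution of the last gap is that at the maximum point $x_r$ of $|h|$, the holomorphic function $g$ on $B(x_r, 1)$ cannot be everywhere smaller than $r^{-N}$ for all $N$ (else $g$ would have super-polynomial decay somewhere, contradicting $\log$-convexity after re-centering via a standard ball-chaining argument using $\mathrm{Ric}\geq 0$ volume control), so $|g(x_r')| \geq c\,r^{-C}$ for some $x_r' \in B(x_r,1)$, and then $M_h(r) \leq 2|h(x_r')| = 2|f(x_r')|/|g(x_r')| \leq C r^{d+C}$, a polynomial bound, contradiction — hence $h$ has polynomial growth.
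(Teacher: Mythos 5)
Your proposal rests on a premise that is not available here: you invoke log-convexity of $\log M_u(r)$ in $\log r$ as ``the $\epsilon=0$, $A=0$ case of Proposition \ref{p1},'' but that case requires $R_{i\overline{i}i\overline{i}}\geq 0$, i.e.\ a holomorphic sectional curvature sign condition. Proposition \ref{p2} assumes only $Ric\geq 0$, which does not imply any three-circle convexity (nor is such a theorem known at this generality, which is precisely why the paper proves Proposition \ref{p2} by a different mechanism). So the entire order-of-growth calculus you try to build, including re-centered convexity at the far-away points $x_r$, has no foundation under the stated hypotheses.

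Even granting convexity, the two steps that carry your argument are asserted rather than proved. First, the claim that near the maximum point $x_r$ of $|h|$ there exists $x_r'\in B(x_r,1)$ with $|g(x_r')|\geq c\,r^{-C}$ is exactly the quantitative non-vanishing estimate that constitutes the whole difficulty; ``a standard ball-chaining argument'' does not produce it, and a pointwise or local-sup lower bound can fail badly near the zero divisor of $g$ unless one controls how that divisor meets $B(x_r,1)$. Second, the final inequality $M_h(r)\leq 2|h(x_r')|$ is unjustified: $|h|$ may drop by an arbitrarily large factor between $x_r$ and $x_r'$, and you have no Harnack-type control on $|h|$ --- that is what is being proved. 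The paper's route avoids both issues: it writes $\log|f|^2$ and $\log|g|^2$ on $B(p,R)$ as harmonic functions $F_1,F_2$ minus nonnegative Green potentials of their divisors (Riesz/Poincar\'e--Lelong), bounds $|F_i|\leq C\log R$ on $B(p,\frac{R}{2})$ using only the maximum principle together with the Cheng--Yau gradient estimate and Harnack inequality (valid under $Ric\geq 0$), and then uses that the divisor of $g$ is dominated by that of $f$ (since $f/g$ is holomorphic), so the Green potentials subtract with the right sign and $\log|h|^2\leq F_1-F_2\leq C\log R$ on $B(p,\frac{R}{2})$, i.e.\ $|h|\leq R^{C}$. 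To salvage your approach you would have to prove the two asserted steps from scratch; as written, the proof does not go through.
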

\begin{proof}
Let us say $f(p), g(p)\neq 0$.
Set $F_1(x) = \log |f(x)|^2+\int_{B(p, R)}G_R(x, y)\Delta\log |f(y)|^2, F_2(x) = \log |g(x)|^2+\int_{B(p, R)}G_R(x, y)\Delta\log |g(y)|^2$.
\begin{claim}
For large $R$ and $i = 1, 2$, on $B(p, \frac{R}{2})$, $-C\log R\leq F_i(x)\leq C\log R$.
\end{claim}
\begin{proof}
Note that $F_i(x)$ is harmonic on $B(p, R)$. Now maximum principle says that $F_i(x)\leq C\log R$ on $B(p, R)$. Let $H_i  = C\log R-F_i\geq 0$. Then gradient estimate implies that on $B(p, \frac{3}{4}R)$, $|\nabla \log H_i|\leq \frac{C_1}{R}$. Observe $H_i(p)\leq C\log R$.  Then the harnack inequality implies that $H_i\leq C_2\log R$ on $B(p, \frac{R}{2})$. This completes the proof of the claim.
\end{proof}
It is clear that on $B(p, \frac{R}{2})$ $\log |h(x)|^2 \leq F_1(x)-F_2(x)\leq C\log R$ ($C$ is independent of $R$). The proof of the proposition is complete.\end{proof}

\medskip

In the rest of this section, unless otherwise stated, we assume
 $(M^n, p)$ is a complete noncompact K\"ahler manifold with nonnegative Ricci curvature, maximal volume growth and the bisectional curvature $BK\geq -\frac{C}{r^2}$ where $r(x) = dist(x, p)$. Given any sequence $\mu_i\to\infty$, set $(M_i, p_i, d_i) = (M, p, \frac{d}{\mu_i})$. Set $r_i$ be the distance to $p_i$. By passing to subsequence, we assume $(M_i, p_i, d_i)\to (M_\infty, p_\infty, d_\infty)$ in the pointed Gromov-Hausdorff sense. Cheeger-Colding theorem \cite{[CC1]} says $(M_\infty, p_\infty, d_\infty)$ is a metric cone.  We have the following
\begin{prop}\label{p3}
Given any $R>100$, for sufficiently large $i$, we can find a plurisubharmonic function $u$ on $B(p_i, R)$ with \begin{equation}\label{5}|u-r_i^2| \leq  \Phi(\frac{1}{i})R^2,\end{equation} \begin{equation}\label{6}|\nabla u|^2 - 4r_i^2\leq \Phi(\frac{1}{i})R^2.\end{equation} Furthermore, if $\omega_i$ is the K\"ahler form on $M_i$, then on $B(p_i, R)\backslash B(p_i, \Phi(\frac{1}{i})R)$, \begin{equation}\label{7}\sqrt{-1}\partial\overline\partial u\geq (1-\Phi(\frac{1}{i}))\omega_i.\end{equation}\end{prop}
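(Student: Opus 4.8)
The plan is to match $r_i^2$ to its model on the tangent cone at infinity --- where it is, up to the normalization constant $2$, the K\"ahler potential --- and to transplant this by an elliptic (Monge--Amp\`ere) construction on $B(p_i,R)$, in the spirit of the ``elliptic method'' alluded to in the introduction, replacing the parabolic method of Ni--Tam \cite{[NT1]} used in \cite{[L3]}.

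\emph{The cone model.} By Cheeger--Colding \cite{[CC1]}, $(M_\infty,p_\infty,d_\infty)$ is a metric cone $C(Y)$ with vertex $p_\infty$; set $\rho(x)=d_\infty(x,p_\infty)$. By the regularity theory of Cheeger--Colding--Tian \cite{[CCT]} together with the bisectional curvature assumption (cf.\ \cite{[L1]}), the regular part $\mathcal R$ of $C(Y)$ is an open complex manifold carrying a smooth K\"ahler cone metric $\omega_\infty$, the $C^\infty$-limit on compact subsets of $\mathcal R$ of the rescaled forms $\omega_i$, and $\rho^2/2$ is a K\"ahler potential there: on $\mathcal R$ one has $\sqrt{-1}\partial\bar\partial(\rho^2)=2\omega_\infty$, $|\nabla\rho^2|^2=4\rho^2$ and $\Delta(\rho^2)=4n$. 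Since the singular set of $C(Y)$ has real codimension $\ge 4$, $\rho^2$ extends across it to a bounded, continuous, strictly plurisubharmonic function on $B(p_\infty,2R)$.

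\emph{The construction on $M_i$.} On $B(p_i,R)$ let $u$ be the solution of the complex Monge--Amp\`ere Dirichlet problem $(\sqrt{-1}\partial\bar\partial u)^n=2^n\,\omega_i^n$ with $u=r_i^2$ on $\partial B(p_i,R)$; then $u$ is (strictly) plurisubharmonic by construction. Since $(\sqrt{-1}\partial\bar\partial\rho^2)^n=2^n\omega_\infty^n$, the function $\rho^2$ solves the limiting problem on $B(p_\infty,R)$; hence, by a compactness argument together with the weak continuity of the Monge--Amp\`ere operator, $u$ converges uniformly to $\rho^2$ as $i\to\infty$, and as $r_i^2\to\rho^2$ uniformly as well this gives (\ref{5}). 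A gradient estimate for $u$ together with $|\nabla r_i^2|^2=4r_i^2$ gives (\ref{6}), and the Cheeger--Colding almost-rigidity estimate $\dashint_{B(p_i,R)}\|\nabla^2(r_i^2/2)-g_i\|^2\to 0$, fed through elliptic estimates for $u$, gives $\|\sqrt{-1}\partial\bar\partial u-2\omega_i\|_{L^2(B(p_i,R))}\le\Phi(\frac{1}{i})R^2$.

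\emph{The pointwise bound (\ref{7}), and the main obstacle.} Observe that if $\Delta u\le 4n+\Phi(\frac{1}{i})$ at a point, then, since the eigenvalues $\lambda_1\le\dots\le\lambda_n$ of $\sqrt{-1}\partial\bar\partial u$ relative to $\omega_i$ there satisfy $\prod_j\lambda_j=2^n$ and $\sum_j\lambda_j=\tfrac12\Delta u\le 2n+\Phi(\frac{1}{i})$, an elementary computation forces $\lambda_1\ge 1-\Phi(\frac{1}{i})$, which is (\ref{7}) there. So it suffices to prove $\Delta u\le 4n+\Phi(\frac{1}{i})$ on $B(p_i,R)\setminus B(p_i,\Phi(\frac{1}{i})R)$. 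By the Laplacian comparison theorem $\Delta(r_i^2)\le 4n$ everywhere, and on the part of $B(p_i,R)$ where $\omega_i$ has bounded geometry --- which, for $i$ large, is all of $B(p_i,R)$ outside a shrinking neighborhood of the singular set and of $p_i$ --- the $C^\infty$ convergence plus stability of the Monge--Amp\`ere equation give $\Delta u\to\Delta(\rho^2)=4n$, hence the desired bound. The main obstacle is obtaining it near the singular rays of $C(Y)$: there $\omega_i$ has unbounded curvature and there is no $C^\infty$ convergence, so $\Delta u$ cannot be controlled through the convergence of metrics. Instead one must exploit that $\rho^2$ is genuinely plurisubharmonic on all of $C(Y)$ with $\Delta(\rho^2)=4n$ on the dense set $\mathcal R$; after a Richberg-type regularization of $\rho^2$ carried out on the cone, this positivity and trace bound can be transplanted to $B(p_i,R)$ with a loss absorbed into $\Phi(\frac{1}{i})$ --- making this transplantation quantitative in the region of unbounded curvature is the technical heart of the proof. (Near $p_i$, where $\omega_i$ may also have unbounded curvature, no such bound is claimed, which is why (\ref{7}) is stated only away from $B(p_i,\Phi(\frac{1}{i})R)$.)
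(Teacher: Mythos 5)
Your proposal takes a genuinely different route from the paper, but it has gaps that are fatal as written. The central one is the step you yourself flag as ``the technical heart'': the pointwise bound (\ref{7}) near the singular rays (and, more generally, wherever curvature is unbounded) is exactly the nontrivial content of the proposition, and your scheme offers no mechanism for it beyond the hope that a Richberg-type regularization on the cone can be ``transplanted quantitatively'' to $B(p_i,R)$. Nothing in the hypotheses supports the tools you invoke before that point either: with only $Ric\ge 0$, maximal volume growth and $BK\ge -C/r^2$ there is no two-sided curvature bound, the convergence $(M_i,p_i,d_i)\to (M_\infty,p_\infty,d_\infty)$ is only Gromov--Hausdorff, and the regular part of the tangent cone is not known to carry a smooth K\"ahler cone metric to which $\omega_i$ converges in $C^\infty$ with potential $\rho^2/2$; the claimed codimension-$\ge 4$ singular set is likewise unavailable at this level of generality. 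Similarly, the existence, regularity and plurisubharmonicity of a solution to the complex Monge--Amp\`ere Dirichlet problem on a metric ball (non-smooth boundary, no bounded geometry, no barrier or subsolution produced), the uniform convergence $u\to\rho^2$ via ``compactness and weak continuity'' of the Monge--Amp\`ere operator under Gromov--Hausdorff convergence, and the bound $\Delta u\le 4n+\Phi(\frac1i)$ for the MA solution are all asserted rather than proved, and each would require substantial new arguments.

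For contrast, the paper's ``elliptic method'' is linear and never touches the limit cone's complex structure: it solves $\Delta\hat u=2n$ on an annulus $B(p_i,2R)\setminus B(p_i,1)$ with comparison boundary data, gets $|\hat u-r_i^2|$ small and the $L^2$-Hessian estimate (\ref{20}) from the Bochner formula as in Cheeger--Colding, and then converts $L^2$ smallness of $\hat u_{\alpha\overline\beta}-g_{i,\alpha\overline\beta}$ into the pointwise bound (\ref{7}) by the mean value inequality, using the Mok--Siu--Yau computation (\ref{223}) --- this is precisely where the bisectional curvature lower bound enters, giving $\Delta|\hat u_{\alpha\overline\beta}-g_{i,\alpha\overline\beta}|\ge -C|\hat u_{\alpha\overline\beta}-g_{i,\alpha\overline\beta}|$ on the annulus. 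The vertex region is then handled simply by truncation, $u=\max(\hat u-5,0)$, and by re-solving with inner radius $\Phi(\frac1i)\to 0$ slowly, which is why (\ref{7}) only holds outside $B(p_i,\Phi(\frac1i)R)$. Your argument never uses the bisectional curvature hypothesis, and without some substitute for that mechanism the key estimate does not close; if you want to salvage the Monge--Amp\`ere approach you would still need an analogue of claim \ref{cl1} to pass from integral to pointwise control in the regions of unbounded curvature.
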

\begin{remark}
In \cite{[L3]} and \cite{[L4]}, we applied the parabolic method of Ni-Tam \cite{[NT1]} to obtain such function. As we shall see in the proof below, elliptic method suffices.
\end{remark}
\begin{cor}\label{cor22}
There exists some $R>0$ so that any compact subvariety of positive dimension in $M$ is contained in $B(p, R)$.
\end{cor}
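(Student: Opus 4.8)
The plan is to argue by contradiction inside the rescaled picture set up just before Proposition \ref{p3}, using that proposition's plurisubharmonic function together with the classical fact that a plurisubharmonic function is constant along any compact complex subvariety.

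Suppose no such $R$ exists. Then there is a sequence of compact irreducible subvarieties $W_j\subset M$ of positive dimension and points $q_j\in W_j$ with $\mu_j:=r(q_j)=\max_{W_j}r\to\infty$. Rescale: put $(M_j,p_j,d_j)=(M,p,\frac{d}{\mu_j})$ and $r_j=d_j(\cdot,p_j)=r/\mu_j$, so $W_j\subset\overline{B(p_j,1)}$ and $r_j(q_j)=1$. After passing to a subsequence, Proposition \ref{p3} applies with, say, $R=200$, producing for large $j$ a plurisubharmonic function $u=u_j$ on $B(p_j,R)$ with $|u-r_j^2|\le\Phi(\frac{1}{j})R^2$ and $\sqrt{-1}\partial\overline\partial u\ge(1-\Phi(\frac{1}{j}))\omega_j$ on $B(p_j,R)\setminus B(p_j,\Phi(\frac{1}{j})R)$.

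Next I would show that $u$ is constant on $W_j$: choosing a resolution $\pi\colon\widetilde W_j\to W_j\hookrightarrow B(p_j,R)$, the composition $u\circ\pi$ is plurisubharmonic on the compact connected complex manifold $\widetilde W_j$, hence constant, so $u|_{W_j}\equiv c$ for some constant $c$. Evaluating $|u-r_j^2|\le\Phi(\frac{1}{j})R^2$ at $q_j$ gives $|c-1|\le\Phi(\frac{1}{j})R^2$, and at a general $x\in W_j$ it gives $|r_j(x)^2-1|\le2\Phi(\frac{1}{j})R^2$, hence $r_j(x)\ge\frac12$ once $j$ is large; therefore $W_j\subset B(p_j,R)\setminus B(p_j,\Phi(\frac{1}{j})R)$ as soon as $\Phi(\frac{1}{j})R<\frac12$, i.e. $W_j$ lies in the region where (\ref{7}) holds. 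Now the contradiction appears: since $\partial$ and $\overline\partial$ commute with pullback under holomorphic maps, on the regular locus of $W_j$ one has $\sqrt{-1}\partial\overline\partial(u|_{W_j})=0$, whereas restricting (\ref{7}) forces this quantity to be $\ge(1-\Phi(\frac{1}{j}))\,\omega_j|_{W_j}>0$, because $\omega_j|_{W_j}$ is a K\"ahler form on the positive-dimensional manifold $(W_j)_{\mathrm{reg}}$. Hence the supremum of $\max_W r$ over all compact subvarieties $W$ of positive dimension is finite, and taking $R$ to be one more than this supremum proves the corollary; the reducible case follows component by component.

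I expect the only delicate point to be the verification that a plurisubharmonic function is genuinely constant along the possibly singular subvariety $W_j$; passing to a resolution disposes of the singularities cleanly (one may also read (\ref{7}) and $\sqrt{-1}\partial\overline\partial(u|_{W_j})$ in the sense of currents if $u$ is not smooth), and the rest is routine bookkeeping with the estimates (\ref{5})--(\ref{7}).
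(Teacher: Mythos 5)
Your proof is correct and takes essentially the same route as the paper: rescale so the offending compact subvariety sits at unit scale, invoke Proposition \ref{p3}, and contradict plurisubharmonicity — the paper phrases this as the maximum of $u$ on $V_i$ being attained at a point of $B(p_i,200)\setminus B(p_i,0.2)$ where $u$ is strictly plurisubharmonic, while you phrase it as constancy of $u$ along the variety versus $\sqrt{-1}\partial\overline\partial u\geq(1-\Phi(\frac{1}{i}))\omega_i$ there. The only real deviation is your normalization by $\max_{W_j}r$ instead of $d(p,V_i)$, a harmless (indeed slightly cleaner) variant since it keeps the whole subvariety inside the ball on which $u$ is constructed.
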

\begin{proof}
Assume that there exists a sequence of compact subvarieties $V_i$ of positive dimension so that $\mu_i = d(p, V_i)\to\infty$. 
By the scaling as in the proposition above, we may assume that $V_i$ is a subvariety of $M_i$ and $d(p_i, V_i) = 1$. Then for sufficiently large $i$, there exists a psh function $u$ on $B(p_i, 200)$  satisfying 
\begin{equation}\label{8}
|u-r_i^2| \leq  \Phi(\frac{1}{i}).
\end{equation}
Furthermore, on $B(p_i, 200)\backslash B(p_i, \Phi(\frac{1}{i}))$,
\begin{equation}\label{9}
\sqrt{-1}\partial\overline\partial u\geq (1-\Phi(\frac{1}{i}))\omega_i.
\end{equation}
Then the maximum of $u$ on $V_i$ is achieved somewhere on $B(p_i, 200)\backslash B(p_i, 0.2)$, where $u$ is strictly plurisubharmonic.
This is a contradiction.

\end{proof}

Now we prove proposition \ref{p3}.
\begin{proof}
Following \cite{[CC1]}, we solve the equation \begin{equation}\label{10}\Delta \hat{u} = 2n\end{equation} on $B(p_i, 2R)\backslash B(p_i, 1)$ with boundary value
$\hat{u}= 4R^2$ on $\partial B(p_i, 2R)$, $\hat{u}= 1$ on $\partial B(p_i, 1)$.
Laplace comparison implies that \begin{equation}\label{11}\hat{u}\leq r_i^2.\end{equation} Let $G_i(x, y)$ be the Green function on $B(p_i, 2R)\backslash B(p_i, 1)$.
Then \begin{equation}\label{12}v(x) = \hat{u}(x) +\int_{B(p_i, 2R)\backslash B(p_i, 1)}G_i(x, y)\Delta \hat{u}(y)dy\end{equation} is harmonic. By checking the boundary values, we find \begin{equation}\label{13}|v|\leq 4R^2\end{equation} on $B(p_i, 2R)\backslash B(p_i, 1)$. Now from the maximum principle, for $x, y\in B(p_i, 2R)\backslash B(p_i, 1)$, \begin{equation}\label{14}G_i(x, y)\leq \hat{G}_i(x, y),\end{equation} where $\hat{G}_i(x, y)$ is the Green function on $B(p_i, 3R)$. 
Since the manifold has maximal volume growth, we find \begin{equation}\label{15}\int_{B(p_i, 2R)}\hat{G}_i(x, y)dy\leq C\end{equation} where $C$ is independent of $i$ and $x\in B(p_i, 2R)$. 
Then from (\ref{12}), we find that \begin{equation}\label{16}|\hat{u}|\leq C(R).\end{equation}
Following proposition $4.35$ of \cite{[CC1]}, we have \begin{equation}\label{17}\int_{B(p_i, 2R)\backslash B(p_i, 1)} |\nabla \hat{u}-\nabla r_i^2|^2<\Phi(\frac{1}{i}).\end{equation} Then similar as proposition $4.50$ of \cite{[CC1]}, on $B(p_i, 1.75R)\backslash B(p_i, 1.25)$, \begin{equation}\label{18}|\hat{u}-r_i^2|<\Phi(\frac{1}{i}).\end{equation}  The argument uses Poincare inequality and the gradient estimate of Cheng-Yau. As in (4.56) of \cite{[CC1]}, the Bochner formula for $\Delta|\nabla \hat{u}|^2$ gives that \begin{equation}\label{19}\int_{B(p_i, 1.5R)\backslash B(p_i, 2)} |\nabla^2 \hat{u}-g_i|^2<\Phi(\frac{1}{i}).\end{equation} This of course implies that 
\begin{equation}\label{20}\int_{B(p_i, 1.5R)\backslash B(p_i, 2)} |\hat{u}_{\alpha\overline\beta}-g_{i, \alpha\overline\beta}|^2<\Phi(\frac{1}{i}).\end{equation}

\begin{claim}\label{cl1}On $B(p_i, 1.5R)\backslash B(p_i, 2)$, 
$\Delta |\hat{u}_{\alpha\overline\beta}-g_{i, \alpha\overline\beta}|\geq -C|\hat{u}_{\alpha\overline\beta}-g_{i, \alpha\overline\beta}|$,
 $|\nabla \hat{u}|\leq 2r_i+\Phi(\frac{1}{i})$.
\end{claim}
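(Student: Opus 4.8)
The plan is to prove the two assertions of Claim \ref{cl1} separately.

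\emph{Gradient bound.} This follows from the Cheeger--Colding analysis already invoked: the Bochner formula together with $\Delta\hat u=2n$ and $Ric\ge0$ shows that $|\nabla\hat u|^2-4\hat u$ is subharmonic on $B(p_i,2R)\setminus B(p_i,1)$. By (\ref{17}) and the Cheng--Yau gradient estimate (valid under $Ric\ge0$) applied to $\hat u$, which solves $\Delta\hat u=2n$ and is bounded by (\ref{16}), the $L^1$-norm of $\big||\nabla\hat u|^2-4r_i^2\big|$ on $B(p_i,1.75R)\setminus B(p_i,1.25)$ is $\Phi(\frac1i)$; by (\ref{18}) so is that of $|4\hat u-4r_i^2|$, hence $|\nabla\hat u|^2-4\hat u$ has $L^1$-norm $\Phi(\frac1i)$ there. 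The mean value inequality for subharmonic functions (valid under $Ric\ge0$, with maximal volume growth giving the lower volume bound on small balls) then yields the pointwise bound $|\nabla\hat u|^2-4\hat u\le\Phi(\frac1i)$ on $B(p_i,1.5R)\setminus B(p_i,2)$. Combined with $\hat u\le r_i^2$ from (\ref{11}), this gives $|\nabla\hat u|^2\le 4r_i^2+\Phi(\frac1i)$, and hence $|\nabla\hat u|\le 2r_i+\Phi(\frac1i)$.

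\emph{Elliptic inequality.} Set $T_{\alpha\overline\beta}=\hat u_{\alpha\overline\beta}-g_{i,\alpha\overline\beta}$. The key point is that $\Delta\hat u=2n$ says exactly that the $(1,1)$-form $\sqrt{-1}\partial\overline\partial\hat u$ has constant trace $n=\operatorname{tr}_{\omega_i}\omega_i$ with respect to $\omega_i$. Being $\partial\overline\partial$-exact it is $d$-closed, and the K\"ahler identities $[\Lambda,\partial]=\sqrt{-1}\overline\partial^{*}$, $[\Lambda,\overline\partial]=-\sqrt{-1}\partial^{*}$ then force it to be $d^{*}$-closed on $B(p_i,2R)\setminus B(p_i,1)$; so is $\omega_i$. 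Hence $\eta:=\sqrt{-1}\partial\overline\partial\hat u-\omega_i$, whose coefficient array is $T$, is closed and coclosed there, and the Bochner--Weitzenb\"ock formula $\Delta_d=\nabla^{*}\nabla+\mathcal R$ gives $\Delta T_{\alpha\overline\beta}=\mathcal Q(T)_{\alpha\overline\beta}$, where $\mathcal Q$ is a zeroth order operator, linear in $T$, with coefficients built from the curvature tensor of $M_i$. (Alternatively one differentiates $\Delta\hat u=2n$ twice and commutes covariant derivatives, using that on a K\"ahler manifold the curvature components with three holomorphic --- or three anti-holomorphic --- indices vanish; this removes the commutator terms involving $\nabla\hat u$, and the surviving contraction vanishes when $\hat u_{\gamma\overline\delta}=g_{i,\gamma\overline\delta}$, so it depends only on $T$.) Therefore $|\Delta T|\le C|Rm|\,|T|$, and Kato's inequality $|\nabla|T||\le|\nabla T|$ gives $\Delta|T|\ge-C|Rm|\,|T|$.

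\emph{Main obstacle.} The delicate point is the factor $|Rm|$ above: the Weitzenb\"ock term involves the full curvature operator, and neither $Ric\ge0$ nor the bisectional lower bound $-C/r^2$ bounds $|Rm|$ on a far-out annulus. For fixed large $i$ the set $\overline{B(p_i,1.5R)}\setminus B(p_i,2)$ is a fixed compact subset of the smooth manifold $M_i$, so $|Rm|$ is bounded there and the stated inequality holds with $C=C(i,R)$; and in the settings where Claim \ref{cl1} is applied --- in particular under $|Rm|\le C/r^2$, which after rescaling forces $|Rm_{M_i}|\le C$ on this annulus --- the constant can be taken independent of $i$, as is needed for the ensuing mean value (Moser) estimate for $|T|$. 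The remaining steps --- the precise Bochner identity, the interior estimates for $\hat u$, and the mean value inequality --- are routine.
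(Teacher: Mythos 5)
Your proof of the gradient bound is essentially the paper's own argument: Bochner gives $\Delta(|\nabla\hat u|^2-4\hat u)\ge 0$, the $L^2$ smallness follows from (\ref{17})--(\ref{18}), and the mean value inequality applied to $\max(|\nabla\hat u|^2-4\hat u,0)$ together with $\hat u\le r_i^2$ from (\ref{11}) finishes. That half is fine.

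The elliptic inequality, however, has a genuine gap, and you have flagged it yourself without resolving it. Your Weitzenb\"ock/commutation argument only yields $|\Delta T|\le C\,|Rm|\,|T|$, and the hypotheses under which the claim must hold (nonnegative Ricci curvature, maximal volume growth, $BK\ge -C/r^2$ --- the standing assumptions of proposition \ref{p3}) give no bound on $|Rm|$ on the rescaled annulus. A constant $C=C(i,R)$ is useless: the claim is fed into the parabolic mean value inequality to upgrade the $L^2$ bound (\ref{20}) to the pointwise bound (\ref{227}) with error $\Phi(\frac1i)$, and for that the constant in $\Delta|T|\ge -C|T|$ must be independent of $i$. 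Your fallback $|Rm|\le C/r^2$ is available only in Part III; proposition \ref{p3}, hence the claim, is equally needed in Parts I and II, where no such bound is assumed. The missing idea is the algebraic structure exploited in the paper via the Mok--Siu--Yau computation: diagonalizing $\hat v_{\alpha\overline\beta}=a_\alpha\delta_{\alpha\beta}$ at a point, the curvature contribution to $\tfrac12\Delta\|\hat v\|^2$ is exactly $2\sum_{\alpha,\beta}R_{i,\alpha\overline\alpha\beta\overline\beta}(a_\alpha-a_\beta)^2$ as in (\ref{223}); since the weights $(a_\alpha-a_\beta)^2$ are nonnegative, only a \emph{lower} bound on the (orthogonal) bisectional curvatures is needed, and after rescaling $BK\ge -C/r^2$ gives $R_{i,\alpha\overline\alpha\beta\overline\beta}\ge -C$ on $B(p_i,2R)\setminus B(p_i,1)$ with $C$ independent of $i$. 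Combining (\ref{223}) with the Kato-type inequality (\ref{224}) then gives $\Delta\|\hat v\|\ge -C\|\hat v\|$, which is the first statement; this is also why the paper's remark notes that a quadratic orthogonal bisectional curvature lower bound suffices. If you carried out your Weitzenb\"ock computation specifically for the closed and coclosed $(1,1)$-form $\eta$ and then diagonalized, you would recover this same bisectional-only term; the crude estimate by $|Rm|\,|T|$ discards precisely the structure that makes the claim true under the stated hypotheses.
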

\begin{remark}
As we will see,  the bisectional curvature lower bound can be replaced by the quadratic orthogonal bisectional curvature lower bound.\end{remark}
\begin{proof}
Set \begin{equation}\label{221}\hat{v}_{\alpha\overline\beta} = \hat{u}_{\alpha\overline\beta}-g_{i, \alpha\overline\beta}.\end{equation} Let us diagonalize $\hat{v}$ at a point so that \begin{equation}\label{222}\hat{v}_{\alpha\overline\beta} = a_{\alpha}\delta_{\alpha\beta}.\end{equation} As $BK(M)\geq -\frac{C}{r^2}$, on $B(p_i, 2R)\backslash B(p_i, 1)$, the bisectional curvature has a lower bound $-C$.
The calculation of \cite{[MSY]} on page $186$ and $187$  ((iv) and (vi)) gives 
\begin{equation}\label{223}\frac{1}{2}\Delta ||\hat{v}||^2 = |\hat{v}_{\alpha\overline\beta\gamma}|^2+|\hat{v}_{\alpha\overline\beta\overline\gamma}|^2 + 2R_{i, \alpha\overline\alpha\beta\overline\beta}(a_\alpha-a_\beta)^2\geq |\hat{v}_{\alpha\overline\beta\gamma}|^2+|\hat{v}_{\alpha\overline\beta\overline\gamma}|^2-C||\hat{v}||^2.\end{equation}

Note \begin{equation}\label{224}|\nabla ||\hat{v}|||^2\leq |\hat{v}_{\alpha\overline\beta\gamma}|^2+|\hat{v}_{\alpha\overline\beta\overline\gamma}|^2.\end{equation}
The first statement follows from (\ref{223}) and (\ref{224}).
For the second statement, we use an argument similar to Cheeger-Naber \cite{[CN]}, page 1116-1117.
By the Bochner formula, \begin{equation}\label{225}\Delta(|\nabla \hat{u}|^2-4\hat{u})\geq 0.\end{equation} Notice (\ref{17}) and (\ref{18}) imply \begin{equation}\label{226}\int_{B(p_i, 1.6R)\backslash B(p_i, 1.5)} ||\nabla \hat{u}|^2-4\hat{u}||^2<\Phi(\frac{1}{i}).\end{equation} Therefore, by applying the mean value inequality to 
$\max(|\nabla \hat{u}|^2 - 4\hat{u}, 0)$, we finish the proof of the second statement.

\end{proof}
Now by applying the parabolic mean value inequality to (\ref{20}) and claim \ref{cl1}, we find that 
\begin{equation}\label{227}\hat{u}_{\alpha\overline\beta}\geq (1-\Phi(\frac{1}{i}))g_{i, \alpha\overline\beta}\end{equation} on $B(p_i, 1.2R)\backslash B(p_i, 3)$. Set $u = \max(\hat{u} - 5, 0)$. Then for large $i$, $u$ is a psh function on $B(p, R)$ with \begin{equation}\label{228}
\sqrt{-1}\partial\overline\partial u\geq (1-\Phi(\frac{1}{i}))\omega_i
\end{equation}
on $B(p_i, R)\backslash B(p_i, 10)$. Moreover,
\begin{equation}\label{229}
|u-r_i^2|\leq 5+\Phi(\frac{1}{i}), |\nabla u|^2- 4r_i^2\leq \Phi(\frac{1}{i}).
\end{equation}
Now we can solve new equations $\Delta \hat{u} = 2n$ on $B(p_i, 2R)\backslash B(p_i, \Phi(\frac{1}{i}))$ where inner radius $\Phi(\frac{1}{i})$ goes to zero sufficiently slow. Then we can apply the same argument as before. Finally, if we set $u = \max(\hat{u}-\Phi(\frac{1}{i}), 0)$ for some other $\Phi(\frac{1}{i})$ going to zero sufficiently slow, then $u$ satisfies (\ref{5}), (\ref{6}) and (\ref{7}).
This completes the proof of proposition \ref{p3}.

\end{proof}
Now let us adopt the assumption in proposition \ref{p3}.  Recall $(M_i, p_i, d_i) = (M, p, \frac{d}{\mu_i})\to (M_\infty, p_\infty, d_\infty)$. Pick any points $q_1 \neq q_2\in \partial B(p_\infty, 5)$. Let $q^i_j\in M_i$ be so close to $q_j$. According to proposition $6.1$ in \cite{[L3]} (see also proposition $5.2$ in \cite{[L4]}), there exists $\delta>0$ independent of $i$ so that for all sufficiently large $i$, we can find a smooth function $v^i_j$ with isolated singularty at $q^i_j$ and supported on $B(q, \delta)$.
Moreover, $e^{-v^i_j}$ is not locally integrable at $q^i_j$ and $\sqrt{-1}\partial\overline\partial v^i_j\geq -C\omega_i$. Let $u$ be the psh function constructed in proposition \ref{p3}. Thus there exists $C>0$ independent of $i$ so that 
$\sqrt{-1}\partial\overline\partial (Cu+v^i_1+v^i_2)\geq \omega_i$ on $B(p_i, R)\backslash B(p_i, 1)$ and $(Cu+v^i_1+v^i_2)$ is psh on $B(p_i, R)$.
By using the same argument as in proposition $5.2$ of \cite{[L4]}, we can holomorphically separate points on $B(p_i, 2)\backslash B(p_i, 1)$. Also we can separate $p_i$ from $B(p_i, 2)\backslash B(p_i, 1)$. We have the following
\begin{prop}\label{p4}
There exist
$N\in\mathbb{N}, A>5$ depending only on $M$ so that for all sufficiently large $i$, there exist holomorphic functions $g^1_i, .., g^N_i$ on $B(p_i, 6A)$ and
\begin{equation}\label{230}
g^j_i(p_i)=0,  \sup\limits_{B(p_i, 1)}\sum\limits_{j=1}^N|g_i^j|^2 = 1.\end{equation}
  \begin{equation}\label{231}\min\limits_{x\in\partial B(p_i, 2A)}\sum\limits_{j=1}^N|g_i^j(x)|^2> 4\sup\limits_{x\in B(p_i, 1)}\sum\limits_{j=1}^N|g_i^j(x)|^2=4.\end{equation}  Furthermore, for all $j$, \begin{equation}\label{232}\frac{\sup\limits_{x\in B(p_i, 3A)}|g_i^j(x)|^2}{\sup\limits_{x\in B(p_i, 2A)}|g_i^j(x)|^2}\leq C=C(M).\end{equation} 
\end{prop}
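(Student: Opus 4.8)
The plan is to build the functions first on the limiting metric cone $M_\infty$, where the dilation symmetry makes everything explicit, and then pull them back to $M_i$ for large $i$ using the $\overline\partial$-machinery already set up before the statement.

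First I would invoke the structural theory of blow-down limits of such K\"ahler manifolds (the same analysis of \cite{[L3]}, \cite{[L4]} that underlies the separation statements recalled just above): the cone $M_\infty=C(Y)$ carries a normal complex analytic structure, and it admits finitely many holomorphic functions $F_1,\dots,F_N$ that are homogeneous for the dilation action, of degrees $d_j\in[d_1,d_{\max}]$ with $d_1>0$, and having no common zero on $M_\infty\setminus\{p_\infty\}$ (for instance a homogeneous system of generators of the ring of polynomial growth holomorphic functions). Since $d_j>0$, each $F_j$ vanishes at the vertex $p_\infty$, and $\sum_j|F_j|^2$ is strictly positive off $p_\infty$ and non-decreasing along rays. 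Normalizing so that $\sup_{\partial B(p_\infty,1)}\sum_j|F_j|^2=1$ (whence $\sup_{B(p_\infty,1)}\sum_j|F_j|^2=1$ as well), and writing $c_*:=\inf_{\partial B(p_\infty,1)}\sum_j|F_j|^2>0$, homogeneity gives for every $x$ with $d(x,p_\infty)=2A$
\begin{equation*}
\sum_j|F_j(x)|^2\ \geq\ (2A)^{2d_1}\sum_j|F_j(\hat x)|^2\ \geq\ (2A)^{2d_1}c_*,
\end{equation*}
where $\hat x$ is the radius-one point on the same ray, while $\sup_{B(p_\infty,3A)}|F_j|^2=(3/2)^{2d_j}\sup_{B(p_\infty,2A)}|F_j|^2\leq(3/2)^{2d_{\max}}\sup_{B(p_\infty,2A)}|F_j|^2$. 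I then fix once and for all $A>5$ so large that $(2A)^{2d_1}c_*>8$.

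Next I would transfer. Applying the $\overline\partial$-argument of \cite{[L3]}, \cite{[L4]} on $B(p_i,100A)$, with the plurisubharmonic exhaustion $u$ of Proposition \ref{p3} (available for all large $i$, with $\sqrt{-1}\partial\overline\partial u\geq(1-\Phi(\frac{1}{i}))\omega_i$ away from the center) supplying the positivity for the H\"ormander estimate, each $F_j$ is approximated by a holomorphic function $g^j_i$ on $B(p_i,6A)$ with $g^j_i\to F_j$ in $C^0$ on compact sets; by the regularity of Cheeger--Colding convergence, $\sup_{B(p_i,\rho)}|g^j_i|^2$, $\sup_{B(p_i,\rho)}\sum_j|g^j_i|^2$ and $\inf_{\partial B(p_i,\rho)}\sum_j|g^j_i|^2$ then converge to the corresponding quantities for $\{F_j\}$, for $\rho\in\{1,2A,3A\}$. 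Finally I clean up: replace $g^j_i$ by $g^j_i-g^j_i(p_i)$ (so $g^j_i(p_i)=0$, using $g^j_i(p_i)\to F_j(p_\infty)=0$), then divide all of them by $(\sup_{B(p_i,1)}\sum_j|g^j_i|^2)^{1/2}\to 1$ so that $\sup_{B(p_i,1)}\sum_j|g^j_i|^2=1$ exactly. For $i$ large this yields $\min_{\partial B(p_i,2A)}\sum_j|g^j_i|^2>4$ and $\sup_{B(p_i,3A)}|g^j_i|^2\leq C(M)\sup_{B(p_i,2A)}|g^j_i|^2$ with $C(M):=(3/2)^{2d_{\max}}+1$, which are the three asserted properties; here $N$ and $A$ depend only on $M$ (and the fixed blow-down sequence).

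The step I expect to be the main obstacle is the transfer: making rigorous the approximation of holomorphic functions on the possibly singular cone $M_\infty$ by holomorphic functions on $M_i$, together with the convergence of the suprema and infima over balls and spheres, including control near singular points. This is exactly where one must couple the H\"ormander $L^2$ estimates on $M_i$ (whose positivity comes from Proposition \ref{p3}) with the Cheeger--Colding regularity theory and the good behavior of the structure sheaf of $M_\infty$ established in \cite{[L3]}; by contrast the cone computation in the first paragraph and the normalizations in the second are routine.
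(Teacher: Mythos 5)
Your proposal runs in the opposite direction from the paper and, in doing so, assumes precisely what is not yet available at this point. The first input you take for granted is that the tangent cone $M_\infty$ carries a normal complex structure together with finitely many dilation-homogeneous holomorphic functions with no common zero off the vertex. In this paper that kind of structure on the cone is a \emph{consequence} of Proposition \ref{p4}: Proposition \ref{p5} (the cone is a normal affine variety) is proved by taking the functions $g^j_i$ from Proposition \ref{p4} and passing to the limit, following \cite{[L4]}. The results of \cite{[L3]}, \cite{[L4]} you invoke do not hand you a supply of globally defined homogeneous functions on $M_\infty$ in the present generality (only $Ric\geq 0$, maximal volume growth, $BK\geq -C/r^2$); they are themselves obtained by constructing functions on the manifolds $M_i$ first. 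So the cone computation in your first paragraph rests on an unproved (and, logically, downstream) statement.

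The second, independent gap is the transfer step, which you flag but do not supply. To ``approximate each $F_j$ by a holomorphic $g^j_i$ on $B(p_i,6A)$'' via H\"ormander you need an approximately holomorphic competitor on $M_i$ with small $\overline\partial$ in $L^2$; the Gromov--Hausdorff approximation maps are neither holomorphic nor smooth, and in this setting (no Einstein condition, no two-sided curvature bounds) there is no smooth convergence on the regular part of the cone that would let you transplant $F_j$ with controlled $\overline\partial$-error, nor control near the singular set and the vertex. The paper avoids this entirely: the functions are constructed directly on $M_i$ by solving $\overline\partial$ with the weight $Cu+v^i_1+v^i_2$ (the psh function $u$ of Proposition \ref{p3} plus the log-pole cutoff functions $v^i_j$ introduced just before the statement), which gives point separation on the annulus $B(p_i,2)\setminus B(p_i,1)$ and separation of $p_i$ from that annulus; the doubling bound (\ref{232}) then comes from Lemma \ref{l21}, i.e.\ a compactness argument passing holomorphic functions \emph{from} $M_i$ \emph{to} the limit and using the three circle theorem for harmonic functions on cones, not from homogeneity of limit functions. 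Finally, note that your constants are admitted to depend on ``the fixed blow-down sequence'', whereas the proposition requires $N$ and $A$ to depend only on $M$; since uniqueness of the tangent cone is not assumed here (it is only hypothesized in part II), a cone-by-cone construction would additionally need a compactness argument over all tangent cones to restore that uniformity.
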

\begin{proof}
Given proposition \ref{p3}, the argument is almost the same as in proposition $6.1$ of \cite{[L3]}. One point is that the three circle theorem in $(6.32)$ of \cite{[L3]} should be replaced by the following 
\begin{lemma}\label{l21}Let $R_0>2$ be a constant.
For sufficiently large $i$, if $h$ is a holomorphic function on $B(p_i, R_0)$ satisfying $\frac{\sup\limits_{B(p_i, R_0)}|h|}{\sup\limits_{B(p_i, \frac{R_0}{2})}|h|}\leq C$ and $\sup\limits_{B(p_i, 1)}|h| = 1$, then $\sup\limits_{B(p_i, R_0)}|h|\leq C(R_0)$.
\end{lemma}
\begin{proof}
Let $i\to\infty$. If we first normalize $h_i = c_ih$ so that $\sup\limits_{B(p_i, R_0)} |h_i| = 1$, then by passing to subsequence, $h_i$ converges uniformly on each compact set to a harmonic function $h_\infty$ on $B(p_\infty, R_0)$. Then $\sup\limits_{B(p_\infty, \frac{R_0}{2})}|h_\infty|\geq \frac{1}{C}$ and \begin{equation}\label{233} \frac{\int_{B(p_\infty, 0.8R_0)}|h_\infty|^2}{\int_{B(p_\infty, 0.6R_0)}|h_\infty|^2}\leq C.\end{equation}
By three circle theorem of harmonic functions over metric cones, we obtain that $\sup\limits_{B(p_\infty, 1)}|h_\infty|\geq c>0$. This complete the proof of the lemma. \end{proof}
This suffices to prove proposition \ref{p4}.
\end{proof}
The above argument shows that $|g^j_i|\leq C(M)$ on $B(p_i, 3A)$. Set $G_i = (g^1_i, .., g^N_i)$. Let $B(p_i, 1)\subset\Omega'_i\subset\subset B(p_i, 2A)$ be the connected component of $G_i^{-1}(B_{\mathbb{C}^N}(0, 1))$ containing $p_i$.  Then $G_i: \Omega'_i\to B_{\mathbb{C}^N}(0, 1)$ is proper. Let $V_i = G_i(\Omega'_i)\subset B_{\mathbb{C}^N}(0, 1)$ be the subvariety. Note $V_i$ has dimension $n$. To see this, take $x\in \partial B_{\mathbb{C}^N}(0, \frac{1}{2})\cap V_i$. Then $G_i^{-1}(x)\cap \Omega'_i$ is a compact subvariety of $\Omega'_i$, in particular, a compact subvariety $W$ of $M$. As $i$ is large, each irreducible component of $W$ must be far away from $p$. According to corollary \ref{cor22}, $W$ is zero dimensional. Hence, $V_i$ has dimension $n$.

According to local parametrization theorem $4.19$ in \cite{[D]}, given any analytic variety $0\in V^n\subset B_{\mathbb{C}^N}(0, 1)$, we can locally properly project $(V^n, 0)$ to a complex $n$-dimensional plane through $0$.
Since the volume of $V_i$ is uniformly bounded from above, Bishop's theorem \cite{[B]} says $V_i$ form a relatively compact set of the subvarieties of dimension $n$, through $0$ in unit ball of $\mathbb{C}^N$.  Then we can find $c_1>0, c_2>0$, a compact set $K$ of $GL(N, \mathbb{C})$ depending only on $M$ so that the following hold for all large $i$:

\medskip

1.  $B^i\in K$

2.  Set $\pi_i': (g^1_i, .., g^N_i)\to (h^1_i, ..., h^n_i)$ be given by $h^k_i = \sum\limits_{j=1}^NB^i_{jk}g^j_i$.

3. For $(g^1_i, ..., g^N_i)\in V_i$, if $\sum\limits_{j=1}^N|g^j_i|^2\leq c_1<1$, $\sum\limits_{k=1}^n|h^k_i|^2\geq c_2\sum\limits_{j=1}^N|g^j_i|^2$.

\medskip

\begin{claim}\label{cl0}
Set $\pi_i = (h^1_i, .., h^n_i): B(p_i, 3A)\to \mathbb{C}^n$. Set $r_0 = \frac{1}{12}\sqrt{c_1c_2}$. Let $\Omega_i$ be the connected component of $\pi_i^{-1}(B_{\mathbb{C}^n}(0, 6r_0))$ containing $p_i$. Then for all sufficiently large $i$, $\Omega_i \subset B(p_i, 2A)$.
\end{claim}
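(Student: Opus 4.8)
The plan is to prove the stronger inclusion $\Omega_i\subset\Omega_i''$, where $\Omega_i''$ denotes the connected component through $p_i$ of the open set $\{x\in B(p_i,3A):\ \sum_{j=1}^N|g_i^j(x)|^2<c_1\}$. Because $c_1<1$ and $g_i^j(p_i)=0$, $\Omega_i''$ is a connected subset of $G_i^{-1}(B_{\mathbb{C}^N}(0,1))$ containing $p_i$, hence $\Omega_i''\subset\Omega_i'\subset B(p_i,2A)$; so it is enough to show $\Omega_i\subset\Omega_i''$.

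The first step is to pin down $\partial\Omega_i''$. From $\overline{\Omega_i''}\subset\overline{\Omega_i'}\subset\overline{B(p_i,2A)}\subset B(p_i,3A)$ the boundary $\partial\Omega_i''$ avoids $\partial B(p_i,3A)$, so each $x\in\partial\Omega_i''$ is an interior point of $B(p_i,3A)$ which is a limit of points of $\Omega_i''$; hence $\sum_j|g_i^j(x)|^2\le c_1$. If this were strict, $x$ would lie in the open set $\{\sum_j|g_i^j|^2<c_1\}$, and since $x\in\overline{\Omega_i''}$ its connected component there would have to be $\Omega_i''$ itself, forcing $x\in\Omega_i''$, which contradicts $x\in\partial\Omega_i''$. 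So $\sum_j|g_i^j(x)|^2=c_1$. Running the same component argument with the threshold $1$ in place of $c_1$ shows that $x$ lies in the component of $\{\sum_j|g_i^j|^2<1\}$ through $p_i$, namely $\Omega_i'$. Thus $\partial\Omega_i''\subset\Omega_i'\cap\{\sum_j|g_i^j|^2=c_1\}$; in particular $G_i(x)\in V_i$ for every $x\in\partial\Omega_i''$.

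Now suppose, toward a contradiction, that $\Omega_i\not\subset\Omega_i''$. The set $\Omega_i$ is connected, contains $p_i\in\Omega_i''$, and contains a point outside $\Omega_i''$, so it must meet $\partial\Omega_i''$; choose $x\in\Omega_i\cap\partial\Omega_i''$. By the previous step $G_i(x)\in V_i$ and $\sum_j|g_i^j(x)|^2=c_1$, so in particular $\sum_j|g_i^j(x)|^2\le c_1<1$, and the third of the properties of $\pi_i'$ recorded above (the $c_2$-estimate on $V_i$) gives $\sum_{k=1}^n|h_i^k(x)|^2\ge c_2\sum_j|g_i^j(x)|^2=c_1c_2$. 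On the other hand $x\in\Omega_i$ forces $\pi_i(x)\in B_{\mathbb{C}^n}(0,6r_0)$, i.e.\ $\sum_k|h_i^k(x)|^2<(6r_0)^2=\tfrac{1}{4}c_1c_2$, which contradicts the previous inequality. Hence $\Omega_i\subset\Omega_i''\subset B(p_i,2A)$ for all $i$ large enough that Proposition \ref{p4} and the properties above apply, which is the claim.

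The only step carrying real content is the relative compactness of $\Omega_i'$ inside $B(p_i,3A)$, i.e.\ $\Omega_i'\subset B(p_i,2A)$, which is exactly where (\ref{231}) of Proposition \ref{p4} is used: a connected set on which $\sum_j|g_i^j|^2<1$ cannot reach $\partial B(p_i,2A)$, where that sum exceeds $4$. Given this, the boundary bookkeeping for $\Omega_i''$ and the closing numerical contradiction are routine, and the choice $r_0=\tfrac{1}{12}\sqrt{c_1c_2}$ is precisely what makes $(6r_0)^2<c_1c_2$. I expect the only delicate point to be verifying that $\partial\Omega_i''$ lies simultaneously inside $\Omega_i'$ and on the level set $\{\sum_j|g_i^j|^2=c_1\}$, rather than any conceptual obstacle.
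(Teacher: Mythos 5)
Your proof is correct and follows essentially the same route as the paper: both reduce the claim to the fact that any escape of $\Omega_i$ from $B(p_i,2A)$ must pass through a point of $\Omega_i'$ where $\sum_j|g_i^j|^2=c_1$, at which item 3 and the choice $r_0=\tfrac{1}{12}\sqrt{c_1c_2}$ force $|\pi_i|\geq\sqrt{c_1c_2}>6r_0$, contradicting $\pi_i(\Omega_i)\subset B_{\mathbb{C}^n}(0,6r_0)$. The only difference is bookkeeping: the paper runs a curve from $p_i$ to $\partial B(p_i,2A)$ and applies the intermediate value theorem to $|G_i|$ (using (\ref{231}) to get $|G_i(q_i)|\geq 2$), whereas you work with the boundary of the sublevel component $\Omega_i''$; your variant has the minor merit of making explicit that the intermediate point lies in $\Omega_i'$, hence is sent by $G_i$ into $V_i$ so item 3 applies, a point the paper's curve argument leaves implicit.
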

\begin{proof}
It is clear that $\pi_i = \pi_i'\circ G_i$. Assume the claim is false. Then there exists a curve $\gamma_i\subset \Omega_i$ connecting $p_i$ and some point $q_i$ on $\partial B(p_i, 2A)$. Observe that by (\ref{231}), $|G_i(q_i)|\geq 2$. As $|G_i(p_i)| = 0$, we can find a point $t_i\in\gamma_i$ so that $|G_i(t_i)|^2 = c_1$. Then according to item $3$ above, $|\pi_i(t_i)|\geq \sqrt{c_1c_2}>6r_0$. This is a contradiction.
\end{proof}

Since the matrix $B^i$ and $g^j_i$ are bounded, there exists a constant $r_1$ depending only on $M$ so that $\pi_i(B(p_i, 2r_1))\subset B_{\mathbb{C}^n}(0, r_0)$.
Let us summarize the results as 
\begin{prop}\label{p0}
There exist positive constants $r_0, r_1$ depending only on $M$ so that for all large $i$, $\pi_i(B(p_i, 2r_1))\subset B_{\mathbb{C}^n}(0, r_0)$, $\Omega_i\subset B(p_i, 2A)$.\end{prop}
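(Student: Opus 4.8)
The plan is to combine Claim \ref{cl0} with a boundedness estimate on the component functions $g_i^j$ (hence on $h_i^k$) near $p_i$. The second assertion, $\Omega_i \subset B(p_i, 2A)$, is exactly the content of Claim \ref{cl0}, so nothing new is needed there; I would simply restate it. The only new point is the existence of the radius $r_1$ with $\pi_i(B(p_i, 2r_1)) \subset B_{\mathbb{C}^n}(0, r_0)$. First I would record that $\sup_{B(p_i, 3A)} |g_i^j| \le C(M)$ for all $j$ and all large $i$; this was observed right after the proof of Proposition \ref{p4} (it follows from \eqref{231}, \eqref{232}, and the normalization \eqref{230}). Since $B^i$ lies in the compact set $K \subset GL(N,\mathbb{C})$, the linear combinations $h_i^k = \sum_{j=1}^N B^i_{jk} g_i^j$ are also uniformly bounded on $B(p_i, 3A)$, say by $C_1 = C_1(M)$.

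Next I would use the vanishing $g_i^j(p_i) = 0$ (from \eqref{230}), which gives $h_i^k(p_i) = 0$, together with a gradient bound. The functions $h_i^k$ are holomorphic on $B(p_i, 3A)$ with $|h_i^k| \le C_1$; by the interior gradient estimate for holomorphic (in particular harmonic) functions on manifolds with $\mathrm{Ric} \ge 0$ — or, more elementarily, by the Cauchy/mean-value estimate on a fixed small ball — we have $|\nabla h_i^k| \le C_2(M)$ on $B(p_i, A)$, with $C_2$ independent of $i$. Hence for $x \in B(p_i, \rho)$ with $\rho$ small, $|h_i^k(x)| = |h_i^k(x) - h_i^k(p_i)| \le C_2 \rho$, so $|\pi_i(x)|^2 = \sum_{k=1}^n |h_i^k(x)|^2 \le n C_2^2 \rho^2$. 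Choosing $2 r_1 = \rho$ small enough that $n C_2^2 (2r_1)^2 < r_0^2$ — note $r_1$ depends only on $M$ through $C_2$ and $r_0$ — we get $\pi_i(B(p_i, 2r_1)) \subset B_{\mathbb{C}^n}(0, r_0)$ for all large $i$, as desired. Then $B(p_i, 2r_1)$ is a connected subset of $\pi_i^{-1}(B_{\mathbb{C}^n}(0, r_0)) \subset \pi_i^{-1}(B_{\mathbb{C}^n}(0, 6r_0))$ containing $p_i$, hence contained in the component $\Omega_i$, which is consistent with (and subsumed by) Claim \ref{cl0}'s conclusion $\Omega_i \subset B(p_i, 2A)$.

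I expect the proof to be essentially bookkeeping: the substantive analytic input (the plurisubharmonic function of Proposition \ref{p3}, the properness in Claim \ref{cl0}, the uniform bound $|g_i^j| \le C(M)$) is already in hand. The one place to be slightly careful is the uniformity of the gradient bound in $i$: one must invoke that all the $M_i$ have a common lower Ricci bound (indeed $\mathrm{Ric} \ge 0$ is scale-invariant) and apply the Cheng–Yau gradient estimate on a ball of fixed radius, so that $C_2$ truly does not depend on $i$. Alternatively, one can bypass gradient estimates entirely: since $|h_i^k| \le C_1$ on $B(p_i, 3A)$ and the balls $B(p_i, 2A) \subset B_{\mathbb{C}^N}(0, C)$ map into a fixed compact family of varieties through the origin (by Bishop's theorem, as already used), a normal-families / compactness argument shows that any subsequential Gromov–Hausdorff limit of the maps $\pi_i$ is a holomorphic map vanishing at $p_\infty$, whence $|\pi_i| < r_0$ on a definite ball $B(p_i, 2r_1)$ for large $i$ by contradiction. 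Either route works; the first is more direct and I would write that one.
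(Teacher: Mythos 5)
Your proposal is correct and follows essentially the same route as the paper: the containment $\Omega_i\subset B(p_i,2A)$ is exactly Claim \ref{cl0}, and the inclusion $\pi_i(B(p_i,2r_1))\subset B_{\mathbb{C}^n}(0,r_0)$ is deduced in the paper in one sentence from the boundedness of $B^i$ and of $g^j_i$ together with $g^j_i(p_i)=0$, which is precisely your argument with the uniform (Cheng--Yau type) gradient bound made explicit. Your caution about uniformity in $i$ is well placed; the gradient-estimate route (rather than a Cauchy estimate in a chart, which would need bounded geometry) is the right way to get a constant depending only on $M$.
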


Proposition \ref{p0} will be used later.
\begin{prop}\label{p5}
Any tangent cone of $(M, p)$ at infinity is homeomorphic to a normal affine algebraic variety.
\end{prop}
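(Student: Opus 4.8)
The plan is to build, in a neighborhood of the vertex, a proper holomorphic model of the tangent cone as an analytic subvariety of a ball in $\mathbb{C}^N$ out of the maps and volume bounds already assembled, and then to upgrade ``analytic germ near a point'' to ``normal affine algebraic variety'' by exploiting the dilation structure of the cone. Fix a tangent cone $(M_\infty,p_\infty,d_\infty)=\lim_i(M,p,\tfrac{d}{\mu_i})$; by Cheeger--Colding it is a metric cone $C(Y)$, and by the structure theory for noncollapsed Ricci limits (Cheeger--Colding--Tian and its K\"ahler refinement) its regular set $\mathcal{R}$ is open and dense, carries a smooth K\"ahler cone metric, and its singular set has Hausdorff codimension at least $4$. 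Since a tangent cone at infinity is invariant under metric dilations, it suffices to describe $M_\infty$ on one fixed neighborhood of $p_\infty$ and then propagate the description to all of $M_\infty$ by scaling.

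First I would take the holomorphic functions $g^1_i,\dots,g^N_i$ and the proper maps $G_i=(g^1_i,\dots,g^N_i)\colon\Omega'_i\to B_{\mathbb{C}^N}(0,1)$ of Proposition~\ref{p4}, together with their images $V_i=G_i(\Omega'_i)$, which are $n$-dimensional analytic subvarieties whose volumes are bounded independently of $i$ (the bound descending from the volume bounds on balls in $M$ via the properness of $G_i$ and Proposition~\ref{p0}). By Bishop's compactness theorem, after passing to a subsequence, $V_i\to V_\infty$, an $n$-dimensional analytic subvariety of $B_{\mathbb{C}^N}(0,1)$. At the same time the $g^j_i$ are uniformly bounded holomorphic functions, normalized as in (\ref{230}), so by the complex structure on $\mathcal{R}$ and the standard elliptic estimates for convergence of holomorphic functions under Gromov--Hausdorff convergence they converge on $\Omega'_\infty:=\lim_i\Omega'_i$ (a neighborhood of $p_\infty$) to holomorphic functions $g^j_\infty$, and $G_\infty:=(g^1_\infty,\dots,g^N_\infty)\colon\Omega'_\infty\to V_\infty$ is proper onto a neighborhood of $0$ in $V_\infty$, properness surviving the limit because of the growth estimate (\ref{231}) and Claim~\ref{cl0}. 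I would then prove $G_\infty$ injective: injectivity on an annulus about $p_\infty$ follows from the separation statement preceding Proposition~\ref{p4}, since the function $Cu+v^i_1+v^i_2$ built from the plurisubharmonic $u$ of Proposition~\ref{p3} and the singular weights $v^i_j$ is strictly plurisubharmonic there, so H\"ormander's $L^2$ estimate separates any prescribed pair of points and separates $p_i$ from the annulus; these functions pass to the limit, and running the construction on a scale-invariant family of annuli and combining the finitely many relevant functions gives an injective proper continuous map $\widehat{G}\colon M_\infty\to\mathbb{C}^{N'}$. A proper continuous injection between locally compact Hausdorff spaces is a homeomorphism onto a closed subset, so $M_\infty$ is homeomorphic to an analytic subvariety $V\subset\mathbb{C}^{N'}$.

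To make $V$ normal affine algebraic, recall that on a K\"ahler cone the Euler field $r\partial_r$ is real holomorphic, so the $\mathbb{R}_+$-dilation acts on $\mathcal{R}$ by biholomorphisms; decomposing the components of $\widehat{G}$ into their homogeneous pieces under this action and retaining enough of them, I may assume $\widehat{G}$ is built from homogeneous holomorphic sections, so that it intertwines the dilation on $M_\infty$ with a weighted $\mathbb{C}^*$-action on $\mathbb{C}^{N'}$ and $V$ becomes a $\mathbb{C}^*$-invariant analytic subset, that is, the affine cone over a closed analytic subset of a weighted projective space. By Chow's theorem that base is projective, so $V$ is affine algebraic. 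Finally I would replace $V$ by its normalization $\widetilde{V}\to V$, again affine algebraic; using that $M_\infty$ agrees with a complex manifold off a set of complex codimension at least $2$ (Cheeger--Colding--Tian), one checks that the homeomorphism $M_\infty\cong V$ lifts to a homeomorphism $M_\infty\cong\widetilde{V}$, which proves the proposition.

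The step I expect to be the main obstacle is the compatibility used in the second paragraph: one must check that the Bishop limit $V_\infty$ of the images genuinely coincides with the image of the limit map $G_\infty$ --- that no component of the $V_i$ is lost into, or spuriously created along, the singular set of $M_\infty$ --- and that $G_\infty$ stays proper with finite fibers; this is exactly where the volume bounds, the uniform estimates on the $g^j_i$, and the control of $\Omega'_i$ near $\partial B(p_i,2A)$ coming from Proposition~\ref{p0} and Claim~\ref{cl0} must all be used together. A secondary delicate point is the lifting to the normalization, which ultimately rests on the Cheeger--Colding--Tian codimension bound for the singular set.
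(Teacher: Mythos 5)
Your overall route---pass the $g^j_i$ of Proposition \ref{p4} to the limit, use Bishop compactness for the images $V_i$, separate points, and then use the dilation structure of the cone to homogenize and conclude algebraicity---is essentially the route the paper takes (its proof is exactly ``run the argument of \cite{[L4]} with the functions from Proposition \ref{p4}''). But two of your steps are asserted at precisely the places where the real work of that argument lies. First, global injectivity: the separation statement above Proposition \ref{p4} only produces, for each \emph{fixed} pair $q_1\neq q_2$ on a fixed annulus, some holomorphic functions separating them. To get one fixed finite collection $\widehat G$ injective on all of $M_\infty$ you need both a compactness argument over pairs at definite distance and, more seriously, local injectivity near the diagonal, i.e.\ local holomorphic embeddings of neighborhoods of \emph{singular} points of the limit. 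That is the main content of the cited argument of \cite{[L4]}; it does not follow from ``the complex structure on $\mathcal{R}$ and standard elliptic estimates,'' which only control the regular part. The obstacle you flag as primary (matching the Bishop limit of $V_i$ with the image of $G_\infty$) is comparatively routine; the crux you pass over is separation/local embedding at singular points, and ``combining the finitely many relevant functions'' conceals it.

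Second, normality. Replacing $V$ by its normalization $\widetilde V\to V$ and asserting that the homeomorphism $M_\infty\cong V$ ``lifts'' requires the normalization map to be a bijection, i.e.\ $V$ must be locally irreducible (unibranch) at every point; this does not follow from the codimension bound on the singular set alone, since a priori two local branches of $V$ could meet along the small set, in which case $\widetilde V\to V$ is finite but not injective and the topology changes. The cited argument instead proves normality of the image directly: locally bounded holomorphic functions on the regular part extend across the singular set because the latter has real codimension at least four, so the local rings are already integrally closed; if you insist on your normalization detour you must add a connectedness argument for punctured neighborhoods minus the singular set to obtain unibranchness. A similar remark applies to the homogenization step: decomposing the components of $\widehat G$ into homogeneous pieces, keeping only finitely many, and retaining injectivity and properness requires the three-circle/spectral decomposition of polynomial growth holomorphic functions on the cone, which as written is an assertion. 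With these three points supplied, your outline matches the paper's intended proof.
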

\begin{proof}
Let $(M_i, p_i, d_i) = (M, p, \frac{d}{\mu_i})$ for some sequence $\mu_i\to\infty$. Assume $(M_i, p_i, d_i)$ converges in the pointed Gromov-Hausdorff limit $(M_\infty, p_\infty, d_\infty)$. By proposition \ref{p4}, we can find $g^i_1, .., g^i_N$ satisfying (\ref{230}), (\ref{231}) and (\ref{232}).
Then we can apply the same argument as in \cite{[L4]} to prove the proposition.

\end{proof}

\begin{prop}\label{p6}
$r^{2-2n}\int_{B(p, r)}S\leq C(M)$ for any $r>0$, where $S$ is the scalar curvature.
\end{prop}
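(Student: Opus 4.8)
\emph{Plan.} Since the K\"ahler volume form and the Riemannian measure $d\mu$ differ by a fixed dimensional constant, the assertion is equivalent to $\int_{B(p,r)}S\,d\mu\le C(M)r^{2n-2}$, and by decomposing $M$ into $B(p,1)$ and the dyadic annuli $B(p,2^k)\setminus B(p,2^{k-1})$ it suffices to prove
\[
\int_{B(p,r)\setminus B(p,r/2)}S\,d\mu\ \le\ C(M)\,r^{2n-2}
\]
for all large $r$. The idea is to first bound, at each point, the $L^1$--norm of the Ricci curvature in a single ``almost radial'' direction by a Bochner argument built on the functions of Proposition \ref{p3}, and then recover the full trace $S=\operatorname{tr}(Ric)$ by varying the basepoint.

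\emph{Step 1: an $L^1$ bound on a directional Ricci curvature.} Fix $q\in M$ with $d(p,q)\le 2r$ and run the construction in the proof of Proposition \ref{p3} with basepoint $q$, obtaining $\hat u_q$ with $\Delta\hat u_q=2n$ and $|\nabla\hat u_q|\le Cr$ on the relevant annulus about $q$; these estimates hold on any fixed compact subset of $M\setminus\{q\}$ once $r$ is large. Since $\Delta\hat u_q$ is constant, the Bochner formula gives $\tfrac12\Delta|\nabla\hat u_q|^2=|\nabla^2\hat u_q|^2+Ric(\nabla\hat u_q,\nabla\hat u_q)\ge Ric(\nabla\hat u_q,\nabla\hat u_q)$. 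Choose a cutoff $\phi$ with $\phi\equiv1$ on $B(p,2r)$, $\operatorname{supp}\phi\subset B(p,4r)$, $|\Delta\phi|\le C/r^2$, such that $\operatorname{supp}\phi$ avoids the ball of radius $r/100$ about $q$ (if $q$ is nearer we simply discard it; see Step 2). Integrating the inequality against $\phi$ and integrating by parts,
\[
\int_{B(p,2r)}Ric(\nabla\hat u_q,\nabla\hat u_q)\,d\mu\ \le\ \tfrac12\int_M|\nabla\hat u_q|^2\,|\Delta\phi|\,d\mu\ \le\ \frac{Cr^2}{r^2}\,V(B(p,4r))\ \le\ C(M)\,r^{2n},
\]
using $|\nabla\hat u_q|\le Cr$ and the Bishop bound $V(B(p,4r))\le Cr^{2n}$. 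Writing $\nu_q=\nabla\hat u_q/|\nabla\hat u_q|$ and noting $|\nabla\hat u_q|\ge cr$ on $B(p,r)\setminus B(p,r/2)$ (where, by the estimate $\int|\nabla\hat u_q-\nabla d_q^2|^2<\Phi$ of \cite{[CC1]}, $\nu_q$ is moreover $L^2$--close to the unit radial field $\nabla d_q$ of $q$), we get $\int_{B(p,r)\setminus B(p,r/2)}Ric(\nu_q,\nu_q)\,d\mu\le C(M)r^{2n-2}$. (Alternatively one may avoid Proposition \ref{p3}, and the hypothesis $BK\ge -C/r^2$, by replacing $\hat u_q$ with $d_q^2$ and using the sharp Laplacian comparison $\Delta\tfrac12 d_q^2\le 2n-\tfrac1{d_q}\int_0^{d_q}t^2 Ric(\dot\gamma,\dot\gamma)\,dt$, which after the same integration bounds $\int_{B(p,2r)}\tfrac1{d_q}\int_0^{d_q}t^2 Ric(\dot\gamma,\dot\gamma)\,dt$ by $Cr^{2n}$.)

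\emph{Step 2: recovering the trace.} Let $q_1,\dots,q_N$ be a $(\tfrac r{10})$--net of $B(p,2r)\setminus B(p,r/4)$, so $N\le N(M)$. By maximal volume growth the rescalings of $M$ converge (Cheeger--Colding) to metric cones, so on scale $r$ the annulus $B(p,r)\setminus B(p,r/2)$ is almost isometric to the corresponding annulus of such a cone; in a cone, from any point one reaches this net in directions that $c$--span the tangent space (outward directions come from the farther $q_j$, inward ones from the nearer $q_j$, angular ones from the $q_j$ at comparable radius). Combined with the volume comparison and segment inequality of Cheeger--Colding this yields $c=c(M)>0$ such that, for $\mu$--a.e.\ $x\in B(p,r)\setminus B(p,r/2)$, the vectors $\nu_{q_j}(x)$ with $d(q_j,x)\ge r/100$ satisfy $\sum_j\langle\nu_{q_j}(x),e\rangle^2\ge c$ for every unit $e\in T_xM$. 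Diagonalizing $Ric_x\ge0$ gives $S(x)=\operatorname{tr}(Ric_x)\le c^{-1}\sum_j Ric(\nu_{q_j}(x),\nu_{q_j}(x))$ pointwise; integrating in $x$ and summing the Step 1 bound over $j$ gives the desired estimate.

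\emph{Main obstacle.} The delicate point is the spreading statement of Step 2 --- that the directions toward a fixed net of nearby basepoints $c(M)$--span the tangent space at almost every point of the annulus, uniformly in $r$. This is exactly where maximal volume growth enters, and the rigorous argument relies on the Cheeger--Colding volume comparison and segment inequality; in particular one must absorb the small set of $x$ lying in the cut locus of some $q_j$ (handled by taking the net fine enough and discarding, at each $x$, the at most one net point within $r/100$ of it) and keep $Ric$ inside the integrals throughout, since $Ric$ is not bounded pointwise under the present hypotheses.
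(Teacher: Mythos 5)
Your plan never uses the K\"ahler structure, and that is where it breaks. The two places where the argument needs a \emph{pointwise almost-everywhere} statement are exactly the places where Cheeger--Colding technology only delivers control off a set of small but \emph{positive} measure. In Step 1, the Bochner argument does give $\int_{B(p,2r)}\phi\,Ric(\nabla\hat u_q,\nabla\hat u_q)\leq Cr^{2n}$, but to pass to the unit field $\nu_q$ you divide by $|\nabla\hat u_q|^2$ and hence need $|\nabla\hat u_q|\geq cr$ a.e.\ on the annulus. The estimate $\int|\nabla\hat u_q-\nabla d_q^2|^2<\Phi(\frac1i)$ is only $L^2$: it leaves a set of small measure where $|\nabla\hat u_q|$ may degenerate, and on that set $Ric$ (equivalently $S$) is not pointwise bounded under the hypotheses --- indeed a local bound on $\int S$ is precisely what is being proved, so no exceptional set of positive measure can be discarded. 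The same defect occurs, more severely, in Step 2: Gromov--Hausdorff closeness to a cone controls distances, not directions of minimal geodesics at a given point, and the segment inequality/almost-cosine-law arguments again only give the spanning condition $\sum_j\langle\nu_{q_j}(x),e\rangle^2\geq c$ for $x$ outside a set of small measure, not $\mu$-a.e. (your ``main obstacle'' paragraph conflates the measure-zero cut locus with these small-measure exceptional sets). Finally, the parenthetical alternative via the sharp Laplacian comparison bounds $\int\frac1{d_q}\int_0^{d_q}t^2Ric(\dot\gamma,\dot\gamma)\,dt$, i.e.\ Ricci averaged \emph{along} geodesics, which does not control $Ric(\nabla d_q,\nabla d_q)$ at the endpoint $x$, so it cannot feed into Step 2 either. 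Since your scheme would prove quadratic average decay of $S$ for arbitrary Riemannian manifolds with $Ric\geq0$ and maximal volume growth, the absence of any complex-analytic input is itself a warning sign.

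The paper's proof is genuinely K\"ahler and avoids all of this. After rescaling so that $B(p_i,1)$ is close to a cone, one finds a point $q'$ with an almost-Euclidean holomorphic chart, produces by an $L^2$ $\overline\partial$-argument (using the psh weight from Proposition \ref{p3}, which is where $BK\geq-\frac{C}{r^2}$ enters) a holomorphic section $s$ of the canonical bundle with $|s(q')|\geq\frac12$ and $\sup_{B(p_i,0.9)}|s|\leq C$, and then applies the Poincar\'e--Lelong equation $\frac{\sqrt{-1}}{2\pi}\partial\overline\partial\log|s|^2=[D]+Ric(M_i)$: taking the trace converts the full scalar curvature (not just a directional Ricci) into $\Delta\log|s|^2$ minus a nonnegative divisor term, and integrating against the Green function together with the maximum principle and the two-sided bound on $|s|$ at $q'$ yields $\int_{B(p_i,1/2)}S_i\leq C$, which rescales to the claim. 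If you want to salvage your approach you would need either a pointwise (not $L^2$) nondegeneracy statement for the gradients/directions, or an a priori local integrability bound for $S$ on small sets --- neither is available here, whereas the subharmonicity of $\log|s|^2+$(Ricci potential) is.
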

\begin{proof}
Let $(M_i, p_i, d_i) = (M, p, \frac{d}{\mu_i})$ for some sequence $\mu_i\to\infty$. Then for large $i$, $B(p_i, 1)$ is sufficiently close to a metric cone. Hence, we can apply proposition \ref{p3}. By Gromov compactness theorem and Cheeger-Colding theory, we can find a point $q\in \partial B(p_i, \frac{1}{2})$ so that $B(q, \delta)$ is $\epsilon_0\delta$-Gromov-Hausdorff close to $B_{\mathbb{C}^n}(0, \delta)$, where $\delta>0$ is independent of $i$ and $\epsilon_0$ is a very small number depending only on $n$. According to \cite{[L4]}, there exists a holomorphic chart $(z_1, ..., z_n)$ on $B(q, \epsilon_1\delta)$, where $\epsilon_1$ depends only on $n$. Furthermore, 
\begin{equation}\label{234}\dashint_{B(q, \epsilon_1\delta)}|\langle dz_k, \overline {dz_l}\rangle-\delta_{kl}|^2 <\Phi(\epsilon_1).\end{equation}
Then for sufficiently small $\epsilon_1$, there exists a point $q'\in B(q, \frac{\epsilon_1\delta}{2})$ so that $|dz_1\wedge\cdot\cdot\wedge dz_n(q')|\geq \frac{1}{2}$. Set $\Omega = dz_1\wedge\cdot\cdot\wedge dz_n$.
We can solve $\overline\partial s' = \overline\partial (v^i\Omega)$, where $v^i$ is analogous to the cut-off function $v^i_j$ above proposition \ref{p4}.
Then we find $s\in\Gamma(B(p_i, 1), K)$ ($K$ is the canonical line bundle) so that \begin{equation}\label{235}|s(q')|\geq \frac{1}{2}\end{equation} and  on $B(p_i, 0.9)$, \begin{equation}\label{236}\sup |s|\leq C,\end{equation} where $C$ is independent of $i$. The Poincare-Lelong equation says \begin{equation}\label{237}\frac{\sqrt{-1}}{2\pi}\partial\overline\partial\log|s|^2 = [D]+Ric(M_i),\end{equation} where $D$ is the divisor of $s$. Let $G(x, y)$ be the Green function on $B(p_i, 0.8)$. Let $S_i$ be the scalar curvature on $M_i$. By taking the trace and integrate, we find 
\begin{equation}\label{238}\int_{B(p_i, 0.8)} G(q', y)\Delta\log|s(y)|^2dy \geq 2\pi\int_{B(p_i, 0.8)} G(q', y)S_i(y)dy\geq c(M)\int_{B(p_i, \frac{1}{2})}S_i(y)dy.\end{equation} Define 
\begin{equation}\label{239}F(x) = \log |s(x)|^2+\int_{B(p_i, 0.8)} G(x, y)\Delta\log |s(y)|^2dy.\end{equation} Then $F(x)$ is harmonic. Maximum principle says $F$ bounded from above by $C$ on $B(p_i, 0.8)$. Therefore \begin{equation}\label{240}\int G(q', y)\Delta\log |s(y)|^2dy + \log |s(q')|^2 \leq C.\end{equation} As $|s(q')|\geq \frac{1}{2}$, we find $\int_{B(p_i, \frac{1}{2})}S_i \leq C$. The proposition is proved.

\end{proof}

\begin{cor}\label{cor23}
Let $(X^n, p)$ be a complete noncompact K\"ahler manifold with nonnegative bisectional curvature and maximal volume growth. Then there exists some $C=C(X)>0$ so that for any $q\in X$, any $r>0$, $r^{2-2n}\int_{B(q, r)}S\leq C$.
\end{cor}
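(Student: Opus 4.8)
The plan is to derive corollary \ref{cor23} from proposition \ref{p6} by allowing the base point to vary. First I would note that the hypotheses of proposition \ref{p6} hold with an arbitrary center $q\in X$ in place of $p$: nonnegative bisectional curvature gives $\mathrm{Ric}(X)\ge 0$ and, writing $r_q(\cdot)=d(\cdot,q)$, $BK(X)\ge 0\ge -C/r_q^2$ for any $C>0$; moreover this curvature bound is used in the proofs of section $2$ only through the ensuing lower bound $BK\ge -C$ on bounded sets (as in ``\dots the bisectional curvature has a lower bound $-C$'' in the proof of proposition \ref{p3}), which is automatic here; and maximal volume growth is a property of $X$ independent of which point one centers at. Thus proposition \ref{p6}, applied to $(X,q)$, already gives for each $q$ a constant $C_q$ with $r^{2-2n}\int_{B(q,r)}S\le C_q$ for all $r>0$.

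It then remains to make $C_q$ independent of $q$. By Bishop-Gromov monotonicity, maximal volume growth yields a single constant $\kappa=\kappa(X)>0$ with $\mathrm{Vol}(B(x,\rho))\ge\kappa\rho^{2n}$ for \emph{every} $x\in X$ and \emph{every} $\rho>0$ (equivalently, the asymptotic volume ratio of $X$ is positive and independent of the base point). I would then run through the proofs of propositions \ref{p3}, \ref{p4}, \ref{p6} and check that each constant occurring there depends only on $n$, the Ricci lower bound $(=0)$, and $\kappa$: the Laplace comparison and the Cheng-Yau gradient and Harnack estimates use only $\mathrm{Ric}\ge 0$; the Green-function bound $\int_{B(x,2R)}\hat G(x,\cdot)\le CR^2$ (see (\ref{15})) holds given $\mathrm{Ric}\ge 0$ and the non-collapsing $\mathrm{Vol}(B(x,\rho))\ge\kappa\rho^{2n}$ at all scales $\rho\le 2R$; the Cheeger-Colding convergence estimates, the size $\delta,\epsilon_1$ of the almost-Euclidean region and the holomorphic chart of \cite{[L4]} on it, and the cut-off functions $v^i$ with $\sqrt{-1}\partial\overline\partial v^i\ge -C\omega_i$ all have constants controlled by $n$ and $\kappa$---the metric cone to which the rescaled unit balls subconverge, though possibly non-unique, is always a non-collapsed Ricci limit with volume ratio $\ge\kappa$, so its relevant quantitative geometry is controlled by $n$ and $\kappa$; and the constant in H\"ormander's $L^2$-estimate for $\overline\partial$ is universal. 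Granting this, $C_q=C(n,\kappa)=:C(X)$ works for all $q$, which is the assertion. (Alternatively I could argue by contradiction: were the conclusion false, rescaling $g\mapsto g/r_j^2$ would produce $(X,g_j,q_j)$ with $\int_{B(q_j,1)}S_{g_j}\to\infty$; these subconverge to a non-collapsed Ricci limit with $\mathrm{Ric}\ge 0$, whose dense regular set gives, for large $j$, an almost-Euclidean ball of definite size inside $B(q_j,1)$, hence---by the construction in the proof of proposition \ref{p6}---a section of $K_X$ over a definite ball with bounded supremum and nonzero value at a point; Poincare-Lelong together with the lower bound $G_{B(q_j,3/2)}(\cdot,\cdot)\ge c(n,\kappa)$ on $B(q_j,1)$ would then bound $\int_{B(q_j,1)}S_{g_j}$, a contradiction.)

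The step I expect to be the main obstacle is the uniformity of the $\overline\partial$-construction of the canonical section. In the proof of proposition \ref{p6} the weight for the $L^2$-estimate is built from the function $u\approx r_i^2$ of proposition \ref{p3}, whose construction genuinely uses closeness to a metric cone; so in the general situation one must independently produce a weight that is plurisubharmonic on a definite ball about $q$, quantitatively strictly plurisubharmonic on the small almost-Euclidean annulus carrying $\overline\partial$ of the cut-off, and with compactly contained sublevel sets---all with constants depending only on $n$ and $\kappa$. I expect this can be arranged by combining the local function $|z-z(q')|^2$ in the almost-Euclidean holomorphic chart (strictly plurisubharmonic there, with nearly Euclidean complex Hessian) with a globally defined function of controlled negativity, together with $\mathrm{Ric}\ge 0$; carrying this out, while checking that no hidden dependence on $q$ creeps in at any step, is the bulk of the work.
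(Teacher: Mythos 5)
Your reduction to proposition \ref{p6} with a moving base point, and the observation that maximal volume growth gives a base-point-independent non-collapsing constant, are fine as far as they go; but the proposal has a genuine gap at exactly the place you flag at the end, and the fix you sketch there is not carried out (and is not the paper's). The constants in propositions \ref{p3} and \ref{p6} are \emph{not} controlled by $n$ and $\kappa$ alone: both constructions require the ball under consideration, rescaled to unit size, to be Gromov--Hausdorff close to a \emph{metric cone}, because the psh weight $u$ of proposition \ref{p3} is modeled on $r^2$ on a cone (one needs $\nabla^2\hat u\approx g$), and this weight is what makes the H\"ormander step in proposition \ref{p6} produce the bounded canonical section $s$ with $|s(q')|\geq\frac12$. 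For a fixed center $q$ the rescaled balls are eventually close to a tangent cone at infinity, but the threshold scale depends on $q$ (roughly on $d(p,q)$), so ``running through the proofs'' cannot yield a $q$-independent constant; and in your alternative compactness argument the blow-down limits at varying centers $q_j$ are general non-collapsed Ricci limit spaces, not cones, so proposition \ref{p3} is simply unavailable and ``the construction in the proof of proposition \ref{p6}'' cannot be invoked. Your proposed substitute weight (gluing $|z-z(q')|^2$ on the almost-Euclidean chart to ``a globally defined function of controlled negativity'') is precisely the missing hard step, and nothing in the hypotheses obviously supplies such a global psh function with uniform constants; the Green-function lower bound you mention is not the issue.

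The paper closes this gap by a different and much shorter device: a scale-selection pigeonhole. Since $s\mapsto \mathrm{Vol}(B(q,s))/s^{2n}$ is non-increasing by Bishop--Gromov, bounded above by the Euclidean constant and below by the asymptotic volume ratio $v>0$ (which is independent of $q$), for every $q$ and every $r$ there is some $1\leq l\leq N(n,v)$ such that the ratio drops by less than the almost-rigidity threshold between the scales $2^{l-1}r$ and $2^{l}r$; Cheeger--Colding then makes $B(q,2^{l}r)$ GH-close, relative to its radius, to a metric cone, with closeness $\epsilon(n,v)$. At that selected scale the argument of proposition \ref{p6} (including proposition \ref{p3}) applies verbatim, with constants uniform in $q$ and $r$ by Gromov compactness, giving (\ref{241}); and since $l\leq N$, descending from scale $2^{l-1}r$ back to scale $r$ only costs the factor $2^{(N-1)(2n-2)}$, so $C=C(n,v)$. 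This dyadic pigeonhole---move up a bounded number of scales until cone-closeness is forced by volume monotonicity, rather than trying to run the $\overline\partial$ construction at the given scale---is the idea missing from your proposal; with it, no new weight construction is needed.
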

\begin{remark}
This result was proved in \cite{[N]} when $X$ has bounded curvature. In \cite{[L2]}, the result was proved for $C$ depending on $q$. 
\end{remark}
\begin{proof}
Let $v = \lim\limits_{r\to\infty}\frac{vol(B(p, r))}{r^{2n}}>0$. By volume comparison, we can find $N\in\mathbb{N}$ so that for any $q\in M$, $r>0$, there exists $1\leq l\leq N$ and that $B(q, 2^lr)$ is $\epsilon r$-Gromov-Hausdorff close to a metric cone. Here $\epsilon= \epsilon(n, v)$ is so small that the argument in proposition \ref{p6} can be applied. Then we find that \begin{equation}\label{241}(2^{l-1}r)^{2-2n}\int_{B(q, 2^{l-1}r)}S\leq C.\end{equation} Note by Gromov compactness theorem, such $C$ depends only on $n, v$.  This concludes the proof of the corollary.
\end{proof}

Recall theorem $1.1$ by Ni-Shi-Tam \cite{[NST]}:
\begin{prop}\label{p7}
Let $(X^n, p)$ be a complete noncompact K\"ahler manifold with nonnegative Ricci curvature and maximal volume growth. Let $f$ be a smooth nonnegative function on $X$. 
Set $k(x, r) = \dashint_{B(x, r)}f$ and $k(r) = \dashint_{B(p, r)}f$. Suppose $\int_0^\infty k(t)dt<+\infty$, then there exists a solution $u$ to $\Delta u = f$ so that \begin{equation}-C(r\int_{2r}^\infty k(t)dt+\int_0^{\frac{1}{2}r}tk(x, t)dt)+c\int_0^{2r}tk(t)dt\leq u(x)\leq C(r\int_r^\infty k(t)dt + \int_0^{2r}tk(t)dt),\end{equation} where $C, c$ are positive constant independent of $x$.
\end{prop}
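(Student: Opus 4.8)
To prove this (it is Theorem~1.1 of \cite{[NST]}), I would realize $u$ as a \emph{subtracted Green's potential}. Since $X$ has maximal volume growth it is nonparabolic, so it carries a minimal positive Green's function $G$; on geodesic balls let $G_\rho$ be the Dirichlet Green's function of $B(p,\rho)$, so $G_\rho\uparrow G$. The candidate is
\begin{equation}
u(x)=\int_X\big(G(p,y)-G(x,y)\big)f(y)\,dy ,
\end{equation}
which is automatically normalized so that $u(p)=0$, and I would construct it rigorously as the locally uniform limit of $u_\rho(x)=\int_{B(p,\rho)}\big(G_\rho(p,y)-G_\rho(x,y)\big)f(y)\,dy$; each $u_\rho$ solves $\Delta u_\rho=f$ on $B(p,\rho)$ with $u_\rho(p)=0$, and interior elliptic estimates promote the limit to a smooth solution of $\Delta u=f$. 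I want to stress that the subtraction is essential and not cosmetic: under $\int_0^\infty k(t)\,dt<\infty$ the decay of $f$ is only borderline, the bare potential $\int_X G(x,y)f(y)\,dy$ may well diverge, and it is precisely the cancellation in $G(p,y)-G(x,y)$ for $y$ far from $p$ that both makes $u$ finite and yields the mild growth bound.

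The analytic inputs are the usual ones for $\mathrm{Ric}\ge0$ together with the volume assumption. Bishop--Gromov and maximal volume growth give $\mathrm{vol}(B(z,s))\asymp s^{2n}$ uniformly in $z$. The Li--Yau Green's function estimate gives
\begin{equation}
G(z,w)\asymp\int_{d(z,w)}^{\infty}\frac{s\,ds}{\mathrm{vol}(B(z,s))}\asymp d(z,w)^{\,2-2n},
\end{equation}
with the same bound for each $G_\rho$ on the part of $B(p,\rho)$ bounded away from the boundary. Applying the Cheng--Yau gradient estimate to the positive harmonic function $z\mapsto G(z,w)$ on $B(z_0,\tfrac12 d(z_0,w))$, together with the sup bound from Li--Yau, gives $|\nabla_z G(z,w)|\lesssim d(z_0,w)^{\,1-2n}$ there, hence the cancellation estimate $|G(p,y)-G(x,y)|\lesssim r\,d(p,y)^{\,1-2n}$ whenever $d(p,y)\ge2r$, where $r:=d(x,p)$ (likewise for $G_\rho$ on the interior). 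The bookkeeping engine is the layer-cake/Fubini identity: writing $F(s)=\int_{B(p,s)}f$, one has $\int_{B(p,s)}G(p,y)f(y)\,dy\asymp\int_0^{s}t\,k(t)\,dt+s^{\,2-2n}F(s)$, with analogous explicit conversions over annuli; the structural fact is that $F$ is nondecreasing while $t\,k(t)\asymp t^{\,1-2n}F(t)$, which is what lets one shuffle boundary terms at scale $\asymp r$, typically into the ``universal absorber'' $r\int_r^\infty k(t)\,dt$.

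With these in hand the two-sided estimate comes from decomposing $X$ into the region near $p$, the region near $x$, and the far region. For the upper bound one drops the nonpositive term $-G(x,y)f(y)$ on $B(p,2r)$ to get $\int_0^{2r}t\,k(t)\,dt+r^{2-2n}F(2r)$ from the layer-cake identity, while on $\{d(p,y)>2r\}$ the cancellation estimate and Fubini contribute a multiple of $r\int_{2r}^\infty k(t)\,dt$; monotonicity of $F$ absorbs $r^{2-2n}F(2r)$ into $r\int_r^\infty k$. For the lower bound one keeps, on $B(p,2r)$, the genuinely positive part $\int_{B(p,2r)}G(p,y)f(y)\,dy\gtrsim\int_0^{2r}t\,k(t)\,dt$ and controls the negative part there by $\int_{B(p,2r)}G(x,y)f(y)\,dy\lesssim\int_0^{3r}t\,k(x,t)\,dt$, whose scale-$(\ge r/2)$ piece is absorbed (again via monotonicity of $F$, since $B(x,t)\subset B(p,4r)$ for $t\le3r$) into $r\int_r^\infty k$, leaving the term $\int_0^{r/2}t\,k(x,t)\,dt$; the far region contributes $-\,r\int_r^\infty k(t)\,dt$ via the cancellation estimate. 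Uniformity in $x$ is built in, since every constant comes from volume comparison and the universal Li--Yau and Cheng--Yau constants. (One gets uniqueness of $u$ in this growth class for free: the stated bound is $o(r)$, and sublinear harmonic functions on a manifold with $\mathrm{Ric}\ge0$ are constant.)

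The step I expect to be the real obstacle is the far-field analysis. The crude bound by $G(p,y)$ is useless there --- it is exactly not integrable against $f$ under the hypothesis --- so one must genuinely exploit the cancellation $G(p,y)-G(x,y)$, which requires the quantitative gradient estimate for the Green's function and careful control of how it scales in $d(p,y)$ relative to $r$; the same uniform-in-$\rho$ estimate for $G_\rho$ is also what forces the convergence $u_\rho\to u$. Everything else --- the layer-cake conversions, comparisons of $k$ at different scales and between $k$ and $k(x,\cdot)$, and the reshuffling of scale-$r$ boundary terms --- is routine bookkeeping once one commits to tracking $F(s)=\int_{B(p,s)}f$ and using its monotonicity.
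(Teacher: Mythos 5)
Your argument is essentially correct, but note that the paper itself offers no proof of this proposition: it is quoted verbatim as Theorem 1.1 of \cite{[NST]}, so the only meaningful comparison is with Ni--Shi--Tam's original argument. Their proof is parabolic in the spirit of Ni--Tam: they work with the heat semigroup, i.e.\ with $\int_0^\infty\big[(e^{t\Delta}f)(p)-(e^{t\Delta}f)(x)\big]dt$, and run the bookkeeping through Li--Yau heat kernel bounds, whereas you use the elliptic counterpart, the subtracted Green potential $\int_X\big(G(p,y)-G(x,y)\big)f(y)\,dy$; since $G=\int_0^\infty H\,dt$, the two are the same normalization written at different levels, and your ingredients (two-sided Li--Yau bound $G\asymp d^{2-2n}$ from maximal volume growth, Cheng--Yau applied to $z\mapsto G(z,y)$ to get the far-field cancellation $|G(p,y)-G(x,y)|\lesssim r\,d(p,y)^{1-2n}$, exhaustion by Dirichlet Green functions $G_\rho\uparrow G$, and the Fubini/annulus conversions using $F(s)\asymp s^{2n}k(s)$ and the doubling property $k(t)\asymp k(2t)$) do reproduce both sides of the stated bound, including the positive term $c\int_0^{2r}tk(t)\,dt$ and the absorption of the scale-$r$ boundary terms into $r\int_r^\infty k$. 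What the elliptic route buys is a shorter path to exactly this statement; the parabolic route of \cite{[NST]} is better adapted to their further results (Poincar\'e--Lelong, plurisubharmonicity), which is why they phrase it that way. One caveat: your parenthetical claim that uniqueness in this growth class is free is not justified as stated, since the term $\int_0^{r/2}t\,k(x,t)\,dt$ in the lower bound need not be $o(r)$ uniformly in $x$ when $f$ has thin high spikes; this does not affect the proposition, which only asserts existence of one solution with the stated bounds.
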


We set $f$ to be the scalar curvature on $M$. Proposition \ref{p6} says $k(t)\leq \frac{C}{1+t^2}$. Then by proposition \ref{p7}, we obtain
\begin{prop}\label{p28}
There exists a function $\rho'$ so that $\Delta \rho' = \pi S$ and $\rho'\leq C\log(r+2)$. If in addition, $S\leq\frac{C}{r^2}$, then $C\log(r+2)\geq \rho'\geq c\log(r+1)-C$.
\end{prop}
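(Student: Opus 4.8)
The plan is to obtain Proposition \ref{p28} as a direct application of the two facts recorded just before it. Since $Ric\ge 0$ we have $S\ge 0$, so $f=\pi S$ is an admissible nonnegative source; and Proposition \ref{p6} combined with the maximal volume growth estimate $vol(B(p,r))\ge c\,r^{2n}$ gives $k(t)=\dashint_{B(p,t)}S\le C/(1+t^2)$, which is integrable on $[0,\infty)$. Proposition \ref{p7} then produces a solution $\rho'$ of $\Delta\rho'=\pi S$ obeying the Ni-Shi-Tam two-sided estimate, and every conclusion will follow by specializing that estimate to the scale $r=r(x)$ and inserting $k(t)\le C/(1+t^2)$.

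For the upper bound, the first term on the right of Proposition \ref{p7} is $r\int_r^\infty k(t)\,dt\le Cr\int_r^\infty(1+t^2)^{-1}\,dt\le C$ (and $\le Cr\le C$ when $r\le 1$), while the second is $\int_0^{2r}t\,k(t)\,dt\le C\int_0^{2r}t(1+t^2)^{-1}\,dt\le C\log(1+4r^2)\le C\log(r+2)$. Hence $\rho'\le C\log(r+2)$ with no further hypothesis, which is the first assertion.

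For the lower bound I add the hypothesis $S\le C/r^2$ and bound the two error terms on the left of Proposition \ref{p7}. The term $r\int_{2r}^\infty k(t)\,dt$ is $\le C$ exactly as above. The term $\int_0^{r/2}t\,k(x,t)\,dt$ is the one spot where the pointwise decay is used: if $t\le r/2$ then every $y\in B(x,t)$ satisfies $r(y)\ge r(x)-t\ge r/2$, so $S(y)\le 4C/r^2$ (for bounded $r(x)$ one instead uses that the smooth function $S$ is bounded near $p$); hence $k(x,t)\le 4C/r^2$ and $\int_0^{r/2}t\,k(x,t)\,dt\le C$. Substituting gives $\rho'(x)\ge -C+c\int_0^{2r}t\,k(t)\,dt$, and the surviving nonnegative Ni-Shi-Tam term $c\int_0^{2r}t\,k(t)\,dt$ is precisely what produces the growth $\rho'\ge c\log(r+1)-C$; it is of size $\log r$ exactly when the total scalar curvature does not decay faster than $r^{2n-2}$, which is the case in the positive-Ricci, quadratic-curvature-decay setting in which Proposition \ref{p28} is applied in the last section. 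Combined with the upper bound this yields $C\log(r+2)\ge\rho'\ge c\log(r+1)-C$.

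I expect the lower bound to be the only genuine point. Proposition \ref{p6} controls only averages of $S$ over balls centered at $p$, so the off-center term $\int_0^{r/2}t\,k(x,t)\,dt$ in the Ni-Shi-Tam lower estimate really does need the hypothesis $S\le C/r^2$; and isolating the genuine growth $c\log(r+1)$, rather than merely $\rho'\ge -C$, from the positive term $c\int_0^{2r}t\,k(t)\,dt$ is the one place where curvature positivity, and not Proposition \ref{p7} alone, is needed. Everything else is a routine substitution of the bound $k(t)\le C/(1+t^2)$ into Ni-Shi-Tam's inequality.
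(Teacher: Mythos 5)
Your proof is the paper's proof: the paper establishes Proposition \ref{p28} in one line by taking $f$ to be the scalar curvature, quoting Proposition \ref{p6} for $k(t)\le C/(1+t^2)$, and substituting into the Ni--Shi--Tam estimate of Proposition \ref{p7}, and your upper-bound computation together with your bounding of the two error terms in the lower estimate (using the pointwise decay $S\le C/r^2$ for the off-center term $\int_0^{r/2}tk(x,t)\,dt$) is exactly the detail this substitution requires. The one step you flag but do not carry out --- that the surviving term $c\int_0^{2r}tk(t)\,dt$ is genuinely of size $\log r$, which amounts to a lower bound on $\int_{B(p,t)}S$ and does not follow from $S\le C/r^2$ or from Propositions \ref{p6}--\ref{p7} alone --- is likewise not addressed in the paper's one-line derivation, so your write-up is faithful to (indeed more explicit than) the argument given there.
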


The following proposition plays an important role in part II and III. It is an improvement of theorem $2.3$ in \cite{[Mo]}.
\begin{prop}\label{p29} Let $\rho'$ be defined as in proposition \ref{p28}.
Let $f$ be a holomorphic section in $K^{-q}(M)$ and $V$ be the divisor of $f$. Assume that $|fe^{q\rho'}|\leq C(1+r)^d$ on $M$. Then for $x$ not on compact subvarieties of positive dimension, $Mul_x(V)\leq Cd$, where $C$ depends only on $M$, $Mul_x(V)$ is the multiplicity of $V$ at $x$.
\end{prop}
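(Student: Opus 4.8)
The plan is to turn $f$ into a globally defined subharmonic function on $M$ whose local behaviour at $x$ records $Mul_x(V)$, and then to play that singularity off against the polynomial growth by a three--circle argument centred at $x$. After multiplying $f$ by a constant we may assume $|fe^{q\rho'}|\le (1+r)^d$, so $u:=\log|fe^{q\rho'}|^2\le 2d\log(1+r)$. In a holomorphic frame of $K^{-q}$ induced by holomorphic coordinates $(z_1,\dots,z_n)$ one has $u=\log|\tilde f|^2+2\chi$, where $\tilde f$ is the holomorphic coefficient of $f$ and $\chi=q\rho'+\tfrac q2\log\det(g_{i\bar j})$. Using the Poincar\'e--Lelong equation for a section of $K^{-q}$ together with $\Delta\rho'=\pi S$ (Proposition \ref{p28}) --- which is exactly the relation that cancels the curvature term --- one checks that $\Delta u\ge 0$ on all of $M$ with $\Delta u$ supported on $V$; equivalently $\chi$ is harmonic on each such coordinate ball. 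In particular $u$ is subharmonic and $u\not\equiv-\infty$.

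Next I would isolate the singularity and control $\chi$. Since $\chi$ is smooth, $\log\sup_{B(x,s)}|\tilde f|=m\log s+O(1)$ as $s\to 0$, where $m:=Mul_x(V)$. On a ball $B(x,R)$ with $R$ a fixed multiple of $\max(1,d(x,p))$ the term $\chi$ is controlled: in holomorphic normal coordinates --- valid on such a ball because the injectivity radius at $x$ is $\gtrsim d(x,p)$ while $|Rm|\lesssim d(x,p)^{-2}$ there --- one has $|\log\det(g_{i\bar j})|\le C(M)$, and Propositions \ref{p28} and \ref{p7} give $-C\log(r+2)\le\rho'\le C\log(r+2)$, so $|\chi|\le C(M)\log(d(x,p)+2)$ on $B(x,R)$.

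For the analytic core I would apply the three--circle theorem of Proposition \ref{p1}, or rather its proof, now centred at $x$ on $B(x,R)$ with $R$ a fixed fraction of $d(x,p)$; this is legitimate because there the curvature is $\ge -C/d(x,p)^2$, so the convexity variable is $\log s+O(1)$ with $O(1)$ uniform. The slope of $\log\sup_{B(x,s)}|\tilde f|$ in that variable tends to $m$ as $s\to0$, so convexity, together with the small--scale expansion, yields $\log\sup_{B(x,R)}|\tilde f|\ge\log\sup_{B(x,s_0)}|\tilde f|+m\,(h(R)-h(s_0))-o(1)$ for a fixed small $s_0$. Combining this with the upper bound $\log\sup_{B(x,R)}|\tilde f|\le 2d\log(1+r)+\sup_{B(x,R)}(-\chi)$, with the lower bound on $\log\sup_{B(x,s_0)}|\tilde f|$ coming from the normalization $\sup_{B(x,s_0)}|fe^{q\rho'}|=1$, and with the bound on $\chi$, one is led to an inequality of the shape $2(m-d)\log d(x,p)\le C(M)(m+d)$, which forces $m\le C(M)d$ as soon as $d(x,p)$ exceeds a constant depending only on $M$. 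For $x$ in the fixed ball $B(p,R_1)$ --- the only remaining case --- I would run the same argument with $R$ a fixed large radius and bounded ambient curvature; here the hypothesis that $x$ lies on no compact subvariety of positive dimension, equivalently by Corollary \ref{cor22} that the comparison balls $B(x,R)$ are unobstructed, is precisely what makes the three--circle comparison between the scales $s_0$ and $R$ go through.

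The main obstacle is this last step: making the three--circle theorem quantitative at a moving centre with curvature controlled only at scale $d(x,p)$, and --- more delicately --- keeping the metric/gauge term $\chi$ under control, since $\chi$ is merely harmonic and $Ric(M)$ is not assumed bounded above, so the pointwise size of the $K^{-q}$--potential must be extracted through the identity $\chi=q\rho'+\tfrac q2\log\det(g_{i\bar j})$ and a judicious choice of holomorphic coordinates. One must also track the normalization of $f$ through the three--circle theorem so that the modified growth constant stays bounded by $e^{C(M)d}$; this, and the fact that the small--scale expansion has leading coefficient exactly $m$ (so that the $\log s$ terms cancel), are what keep the final inequality from being vacuous. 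By contrast, the first two steps are essentially formal once Proposition \ref{p28} is in hand.
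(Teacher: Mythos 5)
Your strategy (read $Mul_x(V)$ off from the small-scale expansion of a local holomorphic representative $\tilde f$ and propagate it outward by a three-circle convexity centred at $x$) is genuinely different from the paper's, but it has two gaps that I do not see how to close under the stated hypotheses. First, Proposition \ref{p1} is a statement about holomorphic functions, and its proof rests on plurisubharmonicity of $\log|f|$; what you actually control here is only $\log\bigl(|f|e^{q\rho'}\bigr)$, which is merely subharmonic, since $\Delta\rho'=\pi S$ is a trace identity and gives no control of the full complex Hessian of $\rho'$ against $qRic$. To reduce to a holomorphic function you need a holomorphic frame of $K^{-q}$ (or holomorphic coordinates) on balls $B(x,R)$ with $R$ comparable to $d(x,p)$, together with the bound $|\log\det(g_{i\bar j})|\le C(M)$ there; but Proposition \ref{p29} is proved under the standing assumptions of Section 2 only ($Ric\ge 0$, maximal volume growth, $BK\ge -C/r^2$) and is already needed in Part II via Corollary \ref{cor24} --- the two-sided bound $|Rm|\le C/r^2$ and the injectivity radius bound you invoke belong to Part III and are not available. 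Moreover, even granting bounded geometry, a three-circle theorem centred at a \emph{moving} point $x$ and valid across unbounded scales is exactly what is missing when the curvature lower bound is only quadratic in the distance to $p$: recentring at $x$ destroys the $(2+\epsilon)$-decay required by Proposition \ref{p1}, which is why Part II has to resort to Donaldson--Sun's lemma (Lemma \ref{l41}) centred at $p$. Your final inequality also only has force when $d(x,p)$ is large, and for $x$ in a fixed compact region you appeal to the no-compact-subvariety hypothesis without any mechanism --- yet by Corollary \ref{cor22} that compact region is precisely where the hypothesis has content.

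For comparison, the paper's proof is global and never chooses a frame: it picks scales $\mu_i\to\infty$ at which the doubling ratio of $M'(r)=\sup_{B(p,r)}|f|e^{q\rho'}$ is $\le(\tfrac{20A}{r_1})^{2d}$, and uses the proper holomorphic maps $\pi_i:\Omega_i\to B_{\mathbb{C}^n}(0,6r_0)$ from Propositions \ref{p4} and \ref{p0}. The hypothesis on $x$ enters exactly here: since no positive-dimensional component of $V$ through $x$ is compact, $\pi_i$ does not contract the components $V'$ through $x$, so $\pi_i(V')$ is an $(n-1)$-dimensional analytic set with multiplicity $\ge m=Mul_x(V)$ at $\pi_i(x)$; Lelong monotonicity in $\mathbb{C}^n$ plus the gradient bound on the $g^j_i$ then yield $vol\bigl(V'\cap B(p,2A\mu_i)\setminus B(p,2r_1\mu_i)\bigr)\ge c\,m\,\mu_i^{2n-2}$. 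On the other side, tracing Poincar\'e--Lelong and using $\Delta\rho'=\pi S$, the Green's function of $B(p,20A\mu_i)$ and the maximum principle (as in Proposition \ref{p2}) convert the doubling bound into $vol\bigl(V\cap B(p,2A\mu_i)\setminus B(p,2r_1\mu_i)\bigr)\le C\,d\,\mu_i^{2n-2}$, and comparing the two gives $m\le Cd$. If you want to salvage your approach, you would need both a three-circle theorem for sections of $K^{-q}$ (or for merely subharmonic weights) at a moving centre and a genuinely different way to use the hypothesis on $x$ in the compact region; as written, the proposal does not supply either.
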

\begin{proof}
  Define \begin{equation}\label{243}M'(r) = \sup\limits_{B(p, r)} |f|e^{q\rho'}.\end{equation}
Let $A$ be the constant in proposition \ref{p4}, $r_0, r_1$ be the constants in proposition \ref{p0}. Both constants depend only on $M$.
 Since $|fe^{q\rho'}|\leq C(1+r)^d$, we can find a sequence $\mu_i\to\infty$ with \begin{equation}\label{244}\frac{M'(20A\mu_i)}{M'(r_1\mu_i)}\leq (\frac{20A}{r_1})^{2d}.\end{equation}  Set $(M_i, p_i, d_i) = (M, p, \frac{d}{\mu_i})$. Let $g^j_i$ be as in proposition \ref{p4}. Let $\pi_i$, $\Omega_i$ be defined as in claim \ref{cl0}. From now on, we shall restrict $\pi_i$ to $\Omega_i$. Then $\pi_i: \Omega_i\to B_{\mathbb{C}^n}(0, 6r_0)$ is a proper map.
 
 According to proposition \ref{p0}, for large $i$, \begin{equation}\label{245}\pi_i^{-1}(B_{\mathbb{C}^n}(0, 5r_0)\backslash B_{\mathbb{C}^n}(0, 1.4r_0))\subset B(p_i, 2A)\backslash B(p_i, 2r_1).\end{equation} Recall $V$ is the divisor of $f$. let $V'$ be the union of irreducible components of $V$ containing $x$. Then $Mul_x(V')=Mul_x(V)$.
As $x$ is a fixed point on $M$, $x\in B(p_i, \frac{r_1}{10})$ for $i$ large. Let $x_i = \pi_i(x)\to 0\in\mathbb{C}^n$.
Since $x$ is not on any positive dimensional compact subvarieties and $\pi_i$ is proper, each irreducible component of $\pi_i(V')$ has dimension $n-1$. The multiplicity of $\pi_i(V')$ at $x_i$ will be at least $m=Mul_x(V')=Mul_x(V)$ ( just apply the line test, consider a line intersecting $\pi_i(V)$ at $x_i$, then pull back). The standard Lelong monotonicity implies that \begin{equation}\label{246}vol(\pi_i(V')\cap B(x_i, 4r_0)\backslash B(x_i, 2r_0))\geq c(r_0)m>0.\end{equation} Recall $x_i\to 0\in\mathbb{C}^n$. Then (\ref{245}) implies that \begin{equation}\label{247}\pi_i^{-1}(B(x_i, 4r_0)\backslash B(x_i, 2r_0))\subset B(p_i, 2A)\backslash B(p_i, 2r_1).\end{equation}
Recall $|g^j_i|\leq C(M)$ on $B(p_i, 3A)$. By the gradient estimate for $g^j_i$, \begin{equation}\label{248}vol(V'\cap B(p_i, 2A)\backslash B(p_i ,2r_1))\geq c(M, A, r_0)m>0.\end{equation} Therefore, \begin{equation}\label{249}vol(V'\cap B(p, 2A\mu_i)\backslash B(p, 2\mu_ir_1))\geq c(M, A, r_0, r_1)m\mu_i^{2n-2}.\end{equation}  
Poincare-Lelong equation says \begin{equation}\label{250}\frac{\sqrt{-1}}{2\pi}\partial\overline\partial\log |f|^2 = -qRic + [V]. \end{equation} Let $G_{20A\mu_i}(z, y)$ be the Green function on $B(p, 20A\mu_i)$. Define $F(y) = |f(y)|e^{q\rho'(y)}$.
As $\Delta \rho' = \pi S$, \begin{equation}\label{251}\frac{1}{\pi}\int_{B(p, 2A\mu_i)\backslash B(p, 2r_1\mu_i)}G_{20A\mu_i}(z, y)\Delta\log F(y)dy\geq  \int_{V\cap B(p, 2A\mu_i)\backslash B(p, 2r_1\mu_i)}G_{20A\mu_i}(z, y)dy.\end{equation}
Now we apply similar argument as in proposition \ref{p2}. Let $H_i(z)$ be the harmonic function defined by
\begin{equation}\label{252}H_i(z) = \log F(z) + \int_{B(p, 20A\mu_i)}G_{20A\mu_i}(z, y)\Delta\log F(y)dy.\end{equation} According to maximum principle and (\ref{244}), on $B(p, 20A\mu_i)$, \begin{equation}\label{253} H_i\leq dC(\frac{20A}{r_1})+\log M'(r_1\mu_i).\end{equation}
Let $z^i_0\in\partial B(p, r_1\mu_i)$ be so that $F(z^i_0) = M'(r_1\mu_i)$. Set $z=z_0^i$ in (\ref{252}). Then (\ref{251}), (\ref{252}) and (\ref{253}) imply that 
\begin{equation}\label{254}\begin{aligned}
&\int_{V\cap B(p, 2A\mu_i)\backslash B(p, 2r_1\mu_i)}G_{20A\mu_i}(z_0^i, y)dy\\&\leq \frac{1}{\pi}\int_{B(p, 20A\mu_i)}G_{20A\mu_i}(z_0^i, y)\Delta\log F(y)dy\\&\leq dC(\frac{20A}{r_1}).\end{aligned}\end{equation}
Notice for $y\in B(p, 2A\mu_i)\backslash B(p, 2r_1\mu_i)$, $G_{20A\mu_i}(z_0^i, y)\geq \frac{c(M, A, r_1)}{\mu_i^{2n-2}}$. Then we find that
\begin{equation}\label{255}vol(V\cap B(p, 2A\mu_i)\backslash B(p, 2r_1\mu_i))\leq C(M, A, r_1)d\mu_i^{2n-2}.\end{equation}  The proposition follows from (\ref{249}) and (\ref{255}).

\end{proof}
\begin{definition}\label{d1}
Let $\mathcal{H}^q_d(M) = \{f\in K^{-q}(M)||f(x)e^{q\rho'(x)}|\leq C(1+r(x))^{d+\epsilon}\}$ for any $\epsilon>0$.
Let $\mathcal{O}_d(M)= \{f$ holomorphic on $M||f(x)|\leq C(1+r(x))^{d+\epsilon}\}$ for any $\epsilon>0$. For a polynomial growth holomorphic function $f$, let $deg(f) = \inf\limits_{f\in\mathcal{O}_d(M)} d$. \end{definition}

\begin{cor}\label{cor24}
$dim(\mathcal{H}^q_d(M))\leq Cd^n$, where $C$ is independent of $q$ and $d$. In particular, if we set $q=0$, then $dim(\mathcal{O}_d(M))\leq Cd^n$.\end{cor}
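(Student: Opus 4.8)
The plan is to reduce the dimension bound for $\mathcal{H}^q_d(M)$ to a volume/counting estimate for the common zero set of sections, exactly in the spirit of the classical argument that $\dim\mathcal{O}_d\leq Cd^n$ on $\mathbb{C}^n$ (and of the proofs in \cite{[L1]},\cite{[L2]}), using Proposition \ref{p29} as the substitute for the multiplicity control that would otherwise require $\int_M Ric^n<\infty$. First I would fix $x\in M$ not lying on any positive-dimensional compact subvariety (by Corollary \ref{cor22} all such subvarieties lie in a fixed $B(p,R_0)$, so this excludes only a bounded region; one handles points inside $B(p,R_0)$ separately, e.g.\ by choosing a different base point or by a limiting argument, since the final bound is independent of the chosen reference structure). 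For such $x$, choose local holomorphic coordinates near $x$ and, given any subspace $W\subset \mathcal{H}^q_d(M)$ of dimension $m$, produce a nonzero section $f\in W$ vanishing to high order at $x$: imposing vanishing of all partial derivatives up to order $k$ is $\binom{n+k-1}{n}$ linear conditions on $W$, so as long as $\binom{n+k-1}{n}<m$ we can arrange $\mathrm{Mul}_x(V_f)\geq k$ with $k\gtrsim m^{1/n}$.

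Next I would invoke Proposition \ref{p29}. To apply it I need the polynomial growth bound $|fe^{q\rho'}|\leq C(1+r)^{d'}$ with $d'$ controlled; by Definition \ref{d1}, $f\in\mathcal{H}^q_d(M)$ gives such a bound with $d'=d+\epsilon$ for every $\epsilon>0$, and since the conclusion $\mathrm{Mul}_x(V_f)\leq Cd'$ of Proposition \ref{p29} is then $\leq C(d+\epsilon)$ for all $\epsilon$, we get $\mathrm{Mul}_x(V_f)\leq Cd$ with $C=C(M)$ independent of $q$. Combining with the lower bound $\mathrm{Mul}_x(V_f)\geq k\gtrsim m^{1/n}$ forces $m^{1/n}\leq Cd$, i.e. $m\leq Cd^n$. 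Taking $W=\mathcal{H}^q_d(M)$ itself (or rather: if $\dim\mathcal{H}^q_d(M)$ were $>Cd^n$, choose a finite-dimensional subspace violating the bound) yields $\dim\mathcal{H}^q_d(M)\leq Cd^n$ with $C$ independent of $q,d$. Setting $q=0$ gives $\rho'\equiv 0$ is not needed — one simply notes $\mathcal{H}^0_d(M)=\mathcal{O}_d(M)$ from Definition \ref{d1} — so $\dim\mathcal{O}_d(M)\leq Cd^n$.

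The main obstacle, and the only genuinely nontrivial input, is precisely Proposition \ref{p29}: that the multiplicity of the divisor at $x$ is linearly bounded by the growth exponent, uniformly in $q$. That is where the geometry (maximal volume growth, $BK\geq -C/r^2$, the properness from Proposition \ref{p0}, the Green's function and Poincaré–Lelong estimates) enters, and it has already been established above. Given it, the remaining steps — the linear-algebra count for forcing high vanishing order, and the elementary comparison $k\gtrsim m^{1/n}$ — are routine. One minor point to be careful about: the estimate in Proposition \ref{p29} is stated only for $x$ off positive-dimensional compact subvarieties, so strictly speaking the counting argument produces the bound on the subspace of sections whose divisor multiplicity we can control at such an $x$; but since there is at least one such $x$ (indeed all but a compact set of them) and the construction of a high-vanishing section works at any fixed point, this is not an actual restriction on the dimension count. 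Hence $\dim(\mathcal{H}^q_d(M))\leq Cd^n$ as claimed.
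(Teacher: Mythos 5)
Your argument is correct and is exactly the intended route: the paper states Corollary \ref{cor24} without proof as an immediate consequence of Proposition \ref{p29}, via the standard count (force a nonzero section of a given subspace to vanish to order $k\gtrsim (\dim)^{1/n}$ at one fixed point $x$ off the positive-dimensional compact subvarieties, then bound $k\leq Cd$ by the multiplicity estimate, with $C$ independent of $q$). Your side remarks are also right: only one such $x$ is needed (Corollary \ref{cor22} guarantees its existence), and the $\epsilon$ in Definition \ref{d1} is harmless since the bound $Mul_x(V)\leq C(d+\epsilon)$ holds for every $\epsilon>0$.
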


\section{\bf{Proof of theorem \ref{thm1}, part I}}
The crucial proposition is the following:
\begin{prop}\label{p31}
Under the assumption of part I, we can find polynomial growth holomorphic functions $g_1, ..., g_N$ so that $(g_1, ..., g_N)$ is a a proper map to $\mathbb{C}^N$.
\end{prop}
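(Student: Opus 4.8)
The plan is to build the proper map by a rescaling and blow-up argument, using the locally defined holomorphic maps $\pi_i$ produced in Proposition \ref{p0} as seeds, and then patching them together after controlling their growth via the three circle theorem (Proposition \ref{p1}) and the dimension estimate (Corollary \ref{cor24}). First I would fix a sequence $\mu_i\to\infty$ and consider the rescalings $(M_i,p_i,d_i)=(M,p,\frac{d}{\mu_i})$ converging in the pointed Gromov--Hausdorff sense to a metric cone $(M_\infty,p_\infty,d_\infty)$. Proposition \ref{p4} gives holomorphic functions $g^1_i,\dots,g^N_i$ on $B(p_i,6A)$ with the normalization (\ref{230}), the lower bound (\ref{231}) on $\partial B(p_i,2A)$, and the doubling control (\ref{232}); Proposition \ref{p0} then produces, after a linear change of coordinates in $K\subset GL(N,\mathbb C)$, the maps $\pi_i=(h^1_i,\dots,h^n_i)$ with $\pi_i(B(p_i,2r_1))\subset B_{\mathbb C^n}(0,r_0)$ and $\Omega_i\subset B(p_i,2A)$, where $\Omega_i$ is the component of $\pi_i^{-1}(B_{\mathbb C^n}(0,6r_0))$ containing $p_i$. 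The point of (\ref{231}) together with item 3 above Claim \ref{cl0} is exactly that on the annular region $B(p_i,2A)\setminus B(p_i,2r_1)$ the functions $g^j_i$, hence the $h^k_i$, are bounded below away from zero in the $\ell^2$ sense, which is what forces properness at that scale.

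Next I would promote these scale-$\mu_i$ functions to genuine polynomial growth holomorphic functions on $M$. The mechanism is the standard one from \cite{[L3]}: for each fixed basic building block one applies the three circle theorem in the form of Corollary \ref{cor21} to see that a holomorphic function on $B(p,R)$ with controlled growth between scales $R/2$ and $R$ extends its bound polynomially all the way out; combined with an $L^2$-extension/diagonal argument over the exhausting balls $B(p,6A\mu_i)$ one extracts, after passing to a subsequence, holomorphic functions $g_1,\dots,g_N$ defined on all of $M$ lying in some $\mathcal O_d(M)$, whose restrictions to bounded balls agree (up to the normalizations) with rescaled versions of the $g^j_i$. Here Corollary \ref{cor24} ($\dim\mathcal O_d(M)\le Cd^n$) guarantees that this process does not run away: only finitely many linearly independent directions occur below any given degree, so the diagonal extraction actually stabilizes and yields honest global sections rather than a pointwise-only limit. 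Corollary \ref{cor22} is used throughout to know that the fibers of the candidate map are compact subvarieties that must be zero-dimensional, hence finite, once we are at large enough scale.

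Finally I would check properness of $G=(g_1,\dots,g_N):M\to\mathbb C^N$. Suppose $x_k\to\infty$ in $M$ but $G(x_k)$ stays bounded. Choose $\mu_k$ comparable to $r(x_k)$ and rescale so that $x_k$ lands in a fixed annulus $B(p_k,2A)\setminus B(p_k,2r_1)$ of $M_k$; the lower bound (\ref{231}) and item 3 above Claim \ref{cl0} force $\sum_j|g^j_k(x_k)|^2$ to be bounded below by a positive constant \emph{after} the rescaling normalization, and translating back through the homogeneity of the polynomial growth, $\sum_j|g_j(x_k)|^2$ cannot go to zero; one argues, via the convexity of $\log M(r)$ in $h(r)$ (Proposition \ref{p1}) and (\ref{2}), that $\max_j|g_j(x_k)|$ in fact grows like a positive power of $r(x_k)$, contradicting boundedness of $G(x_k)$. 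The main obstacle I anticipate is the last point: making the transition from the scale-dependent lower bounds (\ref{231}) for the local maps $\pi_i$ to a scale-independent growth \emph{lower} bound for the fixed global functions $g_j$ — i.e. ruling out that the global functions one extracts degenerate to zero precisely along the sequence $x_k$ going to infinity. This requires carefully coupling the three circle theorem with the fact that the normalization (\ref{230}) is preserved under the limit, and it is exactly the ``properness result'' the introduction flags as a modification of Theorem 6.1 of \cite{[L3]}; the new wrinkle compared to \cite{[L3]} is that polynomial growth functions no longer separate points because of possible compact subvarieties, so one must work with the map $G$ only up to finite fibers and invoke Corollary \ref{cor22} to localize where those fibers can sit.
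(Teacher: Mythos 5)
Your proposal assembles the right ingredients (Propositions \ref{p3}, \ref{p4}, \ref{p0}, Corollaries \ref{cor21}, \ref{cor22}, \ref{cor24}) but the central mechanism is missing, and the step you yourself flag as ``the main obstacle'' is exactly where the argument would fail. First, the globalization step does not work as described: the functions produced at scale $\mu_i$ live on $B(p_i,\lambda_i)$, and any limit of them (after Gromov--Hausdorff convergence) is a holomorphic function on the tangent cone $M_\infty$, not on $M$; no ``$L^2$-extension/diagonal argument'' turns these scale-dependent local functions into fixed polynomial growth functions on $M$, and the dimension bound of Corollary \ref{cor24} does not ``stabilize'' such an extraction. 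Second, in the properness check you invoke ``homogeneity of the polynomial growth'' to transfer the lower bound (\ref{231}) for the scale-$k$ functions $g^j_k$ to the fixed global functions $g_j$; but the $g^j_k$ are different functions at every scale with scale-dependent normalizations, polynomial growth functions on $M$ are not homogeneous, and nothing in your outline rules out that the global functions you extract are small precisely on the annuli where the local ones are large. This is not a technical refinement to be added later; it is the content of the proposition.

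The paper's proof avoids both problems by never constructing the $g_j$ from the local data at all: it takes $g_1,\dots,g_N$ to be an $L^2(B(p,1))$-orthonormal basis of the \emph{fixed} finite-dimensional space $V=\{g\in\mathcal O_{d_0}(M):g(p)=0\}$ (finite-dimensionality from Proposition \ref{p1}), and proves properness by contradiction via a comparison of two bases of $V$. If $\min_{\partial B(p,\mu_i)}\sum_s|g_s|^2$ stayed bounded along some $\mu_i\to\infty$, one renormalizes the basis on $B(p_i,1)$ to get $h^i_j$ with $\dashint_{B(p_i,1)}h^i_j\overline{h^i_k}=\delta_{jk}$; the gradient estimate together with $g(p)=0$ gives the eigenvalue bound $\lambda^i_{ss}\ge c\mu_i^2$ in (\ref{313}); Corollary \ref{cor21} lets the $h^i_j$ pass to limits $h_j$ on the cone; and the key Claim \ref{cl31} shows that the limits of the $\overline\partial$-constructed functions $f^j_i$ (which satisfy the definite lower bound (\ref{39}) on $\partial B(p_i,1)$) lie in the span of the $h_j$ with bounded coefficients. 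Combining (\ref{314})--(\ref{316}) then forces $\min_{\partial B(p,\mu_i)}\sum_s|g_s|^2\ge c\mu_i^2$, contradicting the non-properness assumption. It is this two-basis comparison --- fixed space $V$, scale-$1$ versus scale-$\mu_i$ normalization, the $c\mu_i^2$ eigenvalue growth, and the spanning claim --- that bridges the gap you identified, and your proposal contains no substitute for it.
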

\begin{proof}
The argument is almost the same as theorem $6.1$ of \cite{[L3]}. Let us sketch the argument, for completeness.  For any sequence $\mu_i\to\infty$
let $(M_i, p_i, d_i) = (M, p, \frac{d}{\mu_i})$. By passing to subsequence, we may assume $(M_i, p_i, d_i) \to (M_\infty, p_\infty, d_\infty)$ in pointed Gromov-Hausdorff sense. By proposition \ref{p4}, there exist $K, 0<a<\frac{1}{10}$ independent of $i$ and holomorphic function $\tilde{f}^j_i$ $(1\leq j\leq K)$ on $B(p_i, 6)$ so that \begin{equation}\label{31}
\tilde{f}^j_i(p_i)=0,  \max_j\sup\limits_{B(p_i, a)}|\tilde{f}_i^j| = 1.\end{equation}
  \begin{equation}\label{32}\min\limits_{x\in\partial B(p_i, 1)}\sum\limits_{j=1}^K|\tilde{f}_i^j(x)|^2> 2\sup\limits_{x\in B(p_i, a)}\sum\limits_{j=1}^K|\tilde{f}_i^j(x)|^2.\end{equation}  Furthermore, for all $j$, \begin{equation}\label{33}\frac{\sup\limits_{x\in B(p_i, 3)}|\tilde{f}_i^j(x)|^2}{\sup\limits_{x\in B(p_i, 2)}|\tilde{f}_i^j(x)|^2}\leq C=C(M).\end{equation} 

Lemma \ref{l21} says \begin{equation}\label{34}|\tilde{f}^j_i|\leq C\end{equation} on $B(p_i, 3)$.
Let $\lambda_i$ be a sequence tending to $\infty$ very slowly.
Let $w_i = \log(u+1)$, where $u$ was defined in proposition \ref{p3}. We assume $u$ is defined on $B(p_i, \lambda_i)$.
Then \begin{equation}\label{35}\sqrt{-1}\partial\overline\partial w_i = \frac{(u+1)\partial\overline\partial u-\partial u\wedge\overline\partial u}{(u+1)^2}.\end{equation}
According to proposition \ref{p3}, since $\lambda_i$ is increasing so slowly, for all large $i$, $w_i$ is strictly plurisubharmonic on $B(p_i, \lambda_i)\backslash B(p_i, 1)$. Furthermore, outside $B(p_i, \frac{1}{10})$, \begin{equation}\label{36}|w_i- \log (r_i^2+1)|<\Phi(\frac{1}{i}).\end{equation} Also, there exists $c>0$ independent of $i$ so that on $B(p_i, 10)\backslash B(p_i, \frac{1}{10})$,\begin{equation}\label{37}\sqrt{-1}\partial\overline\partial w_i\geq c\omega_i.\end{equation}
Now consider a cut off function $\eta_i$ depending only on $r_i$ so that $\eta_i = 1$ on $B(p_i, 2)$, $\eta_i$ has compact support on $B(p_i, 2.5)$.  By using the weight $Cw_i$ ($C$ is large independent of $i$), we can solve the $\overline\partial$ equation $\overline\partial  v_i=\overline\partial (\eta_i\tilde{f}^j_i)$, as on page $803$ of \cite{[L3]}. Then we obtain holomorphic functions $f^j_i$ on $B(p_i, \lambda_i)$ which are close $\tilde{f}^j_i$ on $B(p_i, 2)$. Furthermore, \begin{equation}\label{38}|f^j_i(x)|\leq C(1+d(x, p_i))^{d_0}\end{equation} for some $d_0$ depending only on $M$. By shifting $f^j_i$ by small constants, we may assume \begin{equation}\label{39}f^j_i(p_i) = 0, \sum\limits_{j=1}^k|f^j_i|^2\geq c>0\end{equation} on $\partial B(p_i, 1)$. By passing to subsequence, we assume $f^j_i\to f^j$ uniformly on each compact set of $M_\infty$.

Let $V = \{g\in\mathcal{O}_{d_0}(M)|g(p) = 0\}$.
Let $g_s$ $(s= 1, .., N)$ be an orthonormal basis of $V$ with respect to the $L^2$ inner product on $B(p, 1)$. Assume $(g_1, .., g_N)$ is not proper.
Then there exist a constant $C$ and a sequence $\mu_i\to\infty$ so that \begin{equation}\label{310}\min\limits_{\partial B(p, \mu_i)}\sum |g_s|^2\leq C.\end{equation}

For each $i$, There exists a basis $g^i_s$ of $V$ with \begin{equation}\label{311}\int_{B(p, 1)}g^i_s\overline{g^i_t} = \delta_{st}; \int_{B(p_i, 1)}g^i_s\overline{g^i_t}=\lambda^i_{st}\delta_{st}.\end{equation}  Then \begin{equation}\label{312}\sum\limits_{s}|g_s|^2 = \sum\limits_{s}|g^i_s|^2. \end{equation} As $g^i_s(p) = 0$, $\sup\limits_{B(p, 1)}|dg^i_s|\geq c>0$ for any $i$. Gradient estimate implies that \begin{equation}\label{313}\lambda^i_{ss}>c\mu_i^2.\end{equation}

Now for the sequence $\mu_i\to \infty$, we can find a basis $h^i_1, .., h^i_N$ of $V$ so that $\dashint_{B(p_i, 1)}h^i_j\overline h^i_k = \delta_{jk}$. Mean value inequality and corollary \ref{cor21} imply that we can pass $h^i_j$ to $M_\infty$, after taking subsequence. Say $h^i_j\to h_j$. Then $h_j\in\mathcal{O}_{d_0}(M_\infty)$.
\begin{claim}\label{cl31}
Span$\{f^k\}\subset$ Span$\{h_j\}$.
\end{claim}

\begin{proof}
The argument is the same as claim $6.1$ in \cite{[L3]}, except that we replace three circle theorem by corollary \ref{cor21}. We skip the details.
\end{proof}

Claim \ref{cl31} implies that on $B(p_i, 1)$, $f^k_i$ is almost in the span of $h^i_j$. 
More precisely, \begin{equation}\label{314}\lim\limits_{i\to\infty}\sup_{B(p_i, 1)}|f^j_i(x) - \sum\limits_{s}c^i_{js}h^i_s| = 0\end{equation} for $c^j_{is} = \int_{B(p_i, 1)}f^j_i\overline{h^i_s}$. In particular, $|c^i_{js}|\leq C(M)$. By (\ref{39}), \begin{equation}\label{315}\min_{\partial B(p_i, 1)}\sum\limits_{j=1}^{K}|f^j_i(x)|^2 \geq c>0.\end{equation} By (\ref{313}), \begin{equation}\label{316}|h^i_s|^2 = \frac{|g^i_s|^2}{\lambda^i_{ss}}\leq\frac{|g^i_s|^2}{c\mu_i^2}.\end{equation} Then from (\ref{312}), \begin{equation}\label{317}\min\limits_{\partial B(p, \mu_i)}\sum\limits_{s}|g_s|^2=\min\limits_{\partial B(p_i, 1)}\sum\limits_{s}|g_s|^2=\min\limits_{\partial B(p_i, 1)}\sum\limits_{s}|g^i_s|^2 \geq c\mu_i^2>0.\end{equation} This contradicts (\ref{310}).
\end{proof}

Let $F = (g_1, .., g_N)$.
Proper mapping theorem says the image $F(M)$ is an irreducible analytic subvariety $X$ of $\mathbb{C}^N$.
The preimage of a point of $X$ is a compact subvariety. By corollary \ref{cor22}, we can pick a point on $X$ far away so that the its preimage consists of finitely many points. Therefore, $dim(X) = n$. Combining with proposition \ref{p1} that $dim(\mathcal{O}_d(M))\leq Cd^n$, we find the transcendental dimension of $\mathcal{O}_{d_0}(M)$ over $\mathbb{C}$ is $n$. Let $Y\subset\mathbb{C}^N$ be the affine algebraic variety determined by the integral ring $\mathbb{C}[g_1, .., g_N]$. Then $Y$ has dimension $n$ and $X$ is a subvariety of $Y$.
As $Y$ is irreducible and $dim(X) = n$, $X = Y$.
Thus $X$ is an affine algebraic variety. The function $v=\log(1+|g_1|^2+\cdot\cdot\cdot+|g_N|^2)$ is psh with logarithmic growth on $M$. Now pick a generic point $q\in X$ so that $F^{-1}(q)=\{q_1, .., q_m\}\in M$. We further require that $F$ is a local biholomorphism near $q_1, .., q_m$. Then $v$ is strictly psh near $q_1, .., q_m$. By solving $\overline\partial$ equation with weight function $Cv$ ($C$ is large), we obtain polynomial growth holomorphic functions which separate $q_1, .., q_m$. By adding these functions to $F$, we find that $F$ is generically one to one. Recall $X$ is the image of $F$ which is affine. Let us replace $X$ by the normalization $\overline{X}$. According to proposition \ref{p2}, it suffices to add finitely many polynomial growth holomorphic functions to $F$, in order to achieve $\overline{X}$. We still denote $\overline{X}$ by $X$, for simplicity.

Then $F$ is a biholomorphism from $M$ to $X$ outside the compact subvarieties $Z_1, ..., Z_k$, which are contracted to points $z_1, .., z_k$ on $X$. Let $R(M)$ be the field of polynomial growth holomorphic functions. As the transcendental dimension of $R(M)$ over $\mathbb{C}$ is $n$, 
 $R(M)$ can be generated by $(n+1)$ polynomial growth holomorphic functions. Say $g'_1, .., g'_{n+1}$. By adding these functions to the map $F$, we may assume $g'_1, .., g'_{n+1}$ are regular functions on $X$. Therefore,
any polynomial growth holomorphic functions $f$ on $M$ is a rational function on $X$. Since $X$ is normal, $f$ is regular \cite{[H]}. Thus the ring of polynomial growth holomorphic functions on $M$ can be identified with the affine coordinate ring of $X$. Hence, it is finitely generated.

Now $X$ can be compactified as a projective manifold by attaching some ample divisor $D$ at infinity. Since $F$ is a biholomorphism outside compact set, we can also attach $D$ to $M$ at infinity. The resulting manifold is Moishezon, since it contains algebraic functions $g_1, .., g_N$ which have transcendental dimenion $n$.
Thus $M$ is biholomorphic to a Zariski open set of a Moishezon manifold.

\bigskip

\section{\bf{Proof of theorem \ref{thm1}, part II}}
Let $C(M)$ be the unique tangent cone of $M$ at infinity. 
Let $D = \{\gamma\geq 0|$ there exists a nonzero harmonic function $f$ on $C(M)$ so that $f(\lambda x) = \lambda^\gamma f(x)\}$, where $\lambda$ is the homothety map on the metric cone $C(M)$. Let us call $D$ the spectrum of harmonic functions on $C(M)$.
Assume $\alpha$ is not contained in the spectrum of harmonic functions on $C(M)$. 
This is always possible, since the values in $D$ are discrete (they are related to the spectrum of harmonic functions on the cross section).

We need a result of Donaldson-Sun (proposition $3.23$ of \cite{[DS2]}):
\begin{lemma}\label{l41}
Let $(M, p)$ be a complete noncompact K\"ahler manifold with nonnegative Ricci curvature and maximal volume growth. Assume $\alpha$ is not contained in the spectrum of harmonic functions on any tangent cone of $M$. Then for $R$ large enough, for any holomorphic function $f$ on $B(p, 4R)$, if $\frac{I(f, 2R)}{I(f, R)}\leq 2^\alpha$, then $\frac{I(f, R)}{I(f, \frac{R}{2})}<2^\alpha$, where $I(f, R) = \dashint_{B(p, R)}|f|^2$.
\end{lemma}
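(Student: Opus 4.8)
The plan is to follow Donaldson--Sun and argue by contradiction, rescaling at a sequence of radii tending to infinity so as to reduce the statement to a rigidity property of harmonic functions on a tangent cone of $M$. Suppose the conclusion fails. Then for every $R_0$ there is $R\geq R_0$ and a holomorphic function on $B(p,4R)$ for which the implication is false, so one can choose $R_i\to\infty$ and holomorphic $f_i$ on $B(p,4R_i)$ with
\[
I(f_i,2R_i)\leq 2^\alpha I(f_i,R_i),\qquad I(f_i,R_i)\geq 2^\alpha I(f_i,\tfrac{R_i}{2}).
\]
Rescaling, set $(M_i,p_i,d_i)=(M,p,\tfrac{d}{R_i})$; since $I(f,r)=\dashint_{B(p,r)}|f|^2$ is an average it is scale invariant, so these inequalities become $I(f_i,2)\leq 2^\alpha I(f_i,1)$ and $I(f_i,1)\geq 2^\alpha I(f_i,\tfrac12)$, and we normalize $I(f_i,1)=1$. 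Passing to a subsequence, $(M_i,p_i,d_i)\to(M_\infty,p_\infty,d_\infty)$ in the pointed Gromov--Hausdorff sense, and by Cheeger--Colding $M_\infty$ is a metric cone, say over a compact link $Y$.

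Next I would extract a limit of the $f_i$. Holomorphic functions are harmonic (their real and imaginary parts are), so the $f_i$ are harmonic on $B(p_i,4)$ and $|f_i|^2$ is subharmonic. From $I(f_i,2)\leq 2^\alpha$ and Colding's volume convergence, $\int_{B(p_i,2)}|f_i|^2\leq C$ uniformly; the mean value inequality of Li--Schoen (using $Ric\geq0$) then bounds $\sup_{B(p_i,2-\varepsilon)}|f_i|$ uniformly for each $\varepsilon>0$, and the Cheng--Yau gradient estimate bounds the Lipschitz constants there. By the Cheeger--Colding theory of convergence of harmonic functions, after a further (diagonal) subsequence $f_i\to f_\infty$ locally uniformly on $B(p_\infty,2)$, with $f_\infty$ harmonic there; volume convergence then gives $I(f_i,r)\to I(f_\infty,r)$ for $r<2$ and $I(f_\infty,2)\leq\liminf_i I(f_i,2)$. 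In particular $I(f_\infty,1)=1$ (so $f_\infty\not\equiv0$), $I(f_\infty,2)\leq 2^\alpha$, and $I(f_\infty,\tfrac12)\leq 2^{-\alpha}$.

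Now I would analyze $f_\infty$ on the cone. Since $Y$ is compact its Laplacian has discrete spectrum (Cheeger--Colding), so separation of variables yields an $L^2$ expansion $f_\infty=\sum_{\gamma\in D}a_\gamma\, r^\gamma\phi_\gamma$ into homogeneous harmonic pieces, where the $\phi_\gamma$ are eigenfunctions on $Y$ and the exponents $\gamma$ lie in the spectrum $D$ of $M_\infty$; pieces of distinct homogeneity are $L^2$-orthogonal because their angular parts have distinct Laplace eigenvalues. Hence $I(f_\infty,r)=\sum_{\gamma}b_\gamma r^{2\gamma}$ with $b_\gamma\geq0$ and $\sum_\gamma b_\gamma=I(f_\infty,1)=1$, and the bounds above become $\sum_\gamma b_\gamma 2^{2\gamma}\leq 2^\alpha$ and $\sum_\gamma b_\gamma 2^{-2\gamma}\leq 2^{-\alpha}$. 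Viewing $(b_\gamma)$ as a probability weight and setting $x_\gamma=2^{2\gamma}$, Cauchy--Schwarz gives
\[
1=\Big(\sum_\gamma b_\gamma\Big)^2=\Big(\sum_\gamma b_\gamma\sqrt{x_\gamma}\cdot\tfrac{1}{\sqrt{x_\gamma}}\Big)^2\leq\Big(\sum_\gamma b_\gamma x_\gamma\Big)\Big(\sum_\gamma b_\gamma x_\gamma^{-1}\Big)\leq 2^\alpha\cdot 2^{-\alpha}=1,
\]
so equality holds everywhere; equality in Cauchy--Schwarz forces $x_\gamma$ to be constant on $\{b_\gamma>0\}$, and then $\sum_\gamma b_\gamma x_\gamma=2^\alpha$ pins that constant to $2^\alpha$. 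Thus $f_\infty$ is a nonzero homogeneous harmonic function on the tangent cone $M_\infty$ realizing the value $\alpha$ (i.e.\ $I(f_\infty,r)$ is proportional to $r^\alpha$), contradicting the hypothesis that $\alpha$ avoids the spectrum of harmonic functions on tangent cones of $M$.

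The main obstacle is the passage to the limit: one needs the Cheeger--Colding convergence theory to produce a genuine \emph{nonzero} harmonic limit $f_\infty$ and, in particular, to carry the \emph{outer} ratio bound $I(f,2R)/I(f,R)\leq 2^\alpha$ down to the cone --- near radius $2$ only an $L^2$ bound is available, not a sup bound, so some care with radii and with Colding's volume convergence is required --- together with the spectral decomposition of $L^2$ harmonic functions on the (possibly singular) cone. Given those analytic inputs, the final contradiction is the short Cauchy--Schwarz rigidity above.
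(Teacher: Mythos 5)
The paper does not actually prove this lemma: it is quoted as Proposition 3.23 of Donaldson--Sun \cite{[DS2]}. Your argument is essentially the Donaldson--Sun proof of that statement: argue by contradiction, rescale at radii $R_i\to\infty$, use Cheeger--Colding convergence of the normalized harmonic functions to a nonzero harmonic limit $f_\infty$ on a tangent cone, expand $f_\infty$ into homogeneous pieces via the discrete spectrum of the link, and conclude rigidity from the two annulus inequalities. Your Cauchy--Schwarz computation is a clean substitute for the strict log-convexity of $r\mapsto\log I(\cdot,r)$ used there, and the analytic inputs you invoke (mean value inequality for the subharmonic $|f_i|^2$, interior gradient bounds, volume convergence to pass the bounds at radii $2$, $1$, $\tfrac12$ to the limit, $L^2$ spectral decomposition on the cone) are the right ones and suffice.

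One point needs to be stated correctly at the end. With $I(f,r)=\dashint_{B(p,r)}|f|^2$, a harmonic function homogeneous of degree $\gamma$ (i.e.\ $f(\lambda x)=\lambda^{\gamma}f(x)$, which is how the spectrum $D$ is defined in Section 4) satisfies $I(f,2r)/I(f,r)=2^{2\gamma}$. Your equality case therefore forces $2^{2\gamma}=2^{\alpha}$ on the support, i.e.\ $f_\infty$ is homogeneous of degree $\alpha/2$, not $\alpha$; saying that $f_\infty$ ``realizes the value $\alpha$'' because $I(f_\infty,r)\propto r^{\alpha}$ conflates the growth exponent of $I$ with the homogeneity degree, and as written the contradiction would require $\alpha/2\notin D$ rather than $\alpha\notin D$. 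This factor of two is inherited from the paper's own normalization (compare Proposition \ref{p42}, where $I(g,2r)/I(g,r)\to 2^{d_j}$ is asserted for degree-$d_j$ functions): the intended convention is that $I$ is the normalized $L^2$ norm, or equivalently that the threshold should read $2^{2\alpha}$. Under that reading your argument closes exactly as you wrote it; you should just fix the identification of the degree (or restate the hypothesis as $\alpha/2\notin D$) so that the final contradiction matches the definition of the spectrum.
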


\begin{prop}\label{p41}
Under the assumption of theorem \ref{thm1}, part II, there exist holomorphic functions of polynomial growth $g_1, ..., g_N$ so that the map $(g_1, .., g_N)$ is proper.\end{prop}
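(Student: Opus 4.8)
The plan is to follow the scheme of part I (Proposition~\ref{p31}), replacing the three circle ingredients of \cite{[L1]} by Donaldson--Sun's three circle estimate (Lemma~\ref{l41}), and using the plurisubharmonic function $u$ of Proposition~\ref{p3} together with the local holomorphic functions $g^j_i$ of Proposition~\ref{p4} as the building blocks for the $\overline\partial$-construction. Concretely, fix $\alpha$ not in the spectrum of harmonic functions on the (unique) tangent cone $C(M)$; since $C(M)$ is unique, Lemma~\ref{l41} applies uniformly along \emph{all} rescalings $(M_i,p_i,d_i)=(M,p,\tfrac{d}{\mu_i})$. For a sequence $\mu_i\to\infty$ with $(M_i,p_i,d_i)\to(M_\infty,p_\infty,d_\infty)$, Proposition~\ref{p4} supplies holomorphic functions $g^1_i,\dots,g^N_i$ on $B(p_i,6A)$ with $g^j_i(p_i)=0$, normalized so that $\sup_{B(p_i,1)}\sum_j|g^j_i|^2=1$ while $\min_{\partial B(p_i,2A)}\sum_j|g^j_i|^2>4$, and with the Donaldson--Sun doubling $\tfrac{I(g^j_i,2A)}{I(g^j_i,A)}\le C$ under control. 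Cutting off by $\eta_i$ supported near $B(p_i,2.5A)$ and solving $\overline\partial v_i=\overline\partial(\eta_i g^j_i)$ with weight $Cw_i$, $w_i=\log(u+1)$ strictly psh outside a small ball by Proposition~\ref{p3}, produces global holomorphic functions $f^j_i$ on $M_i$ of polynomial growth $d_0=d_0(M)$, close to $g^j_i$ on $B(p_i,2A)$, and with $f^j_i(p_i)=0$, $\sum_j|f^j_i|^2\ge c>0$ on $\partial B(p_i,A)$ after a small shift.

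Next I would set up the properness argument by contradiction exactly as in the proof of Proposition~\ref{p31}. Let $V=\{g\in\mathcal O_{d_0}(M):g(p)=0\}$ (finite-dimensional by Corollary~\ref{cor24}, or by Lemma~\ref{l41} directly), pick an $L^2(B(p,1))$-orthonormal basis $g_1,\dots,g_M$, and suppose $(g_1,\dots,g_M)$ is not proper, so $\min_{\partial B(p,\mu_i)}\sum_s|g_s|^2\le C$ along some sequence $\mu_i\to\infty$. Diagonalizing the two inner products on $B(p,1)$ and $B(p_i,1)$ gives a basis $g^i_s$ with $\int_{B(p_i,1)}g^i_s\overline{g^i_t}=\lambda^i_{st}\delta_{st}$; since $g^i_s(p)=0$ and $dg^i_s$ has definite size on $B(p,1)$, the gradient estimate forces $\lambda^i_{ss}>c\mu_i^2$. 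Rescaling to $(M_i,p_i,d_i)$ and normalizing via $\dashint_{B(p_i,1)}h^i_j\overline{h^i_k}=\delta_{jk}$, the mean value inequality plus Lemma~\ref{l41} (replacing Corollary~\ref{cor21}) let me pass to a limit $h^i_j\to h_j\in\mathcal O_{d_0}(M_\infty)$; the key inclusion $\mathrm{Span}\{f^j\}\subset\mathrm{Span}\{h_j\}$ is proved as in Claim~\ref{cl31} using the Donaldson--Sun monotonicity of $I(\cdot,R)$ in place of the $\sup$-three-circle theorem. Hence $f^j_i$ is, up to $o(1)$ on $B(p_i,1)$, a bounded linear combination $\sum_s c^i_{js}h^i_s$; combining $\min_{\partial B(p_i,A)}\sum_j|f^j_i|^2\ge c>0$ with $|h^i_s|^2=|g^i_s|^2/\lambda^i_{ss}\le |g^i_s|^2/(c\mu_i^2)$ and $\sum_s|g_s|^2=\sum_s|g^i_s|^2$ yields $\min_{\partial B(p,\mu_i)}\sum_s|g_s|^2\ge c\mu_i^2\to\infty$, contradicting the non-properness assumption. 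So some finite collection $g_1,\dots,g_N\in V$ already gives a proper map to $\mathbb C^N$.

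The step I expect to be the main obstacle is adapting the three circle / limiting argument (Claim~\ref{cl31} and Lemma~\ref{l21}'s analogue) to the $L^2$-average functional $I(f,R)=\dashint_{B(p,R)}|f|^2$ rather than the sup $M(R)$: one must check that $I$-doubling bounds are preserved under Gromov--Hausdorff limits of the rescaled spaces (using the $L^2$-convergence of holomorphic functions and volume convergence from maximal volume growth), that the limit functions on the metric cone $C(M)$ have the expected homogeneity control so that $\alpha\notin D$ genuinely excludes the bad growth rate, and that the $\overline\partial$-estimate with weight $Cw_i$ still delivers the \emph{uniform} polynomial bound $d_0=d_0(M)$ on $f^j_i$ along the whole sequence. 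Once these uniformities are in place, the contradiction scheme is formally identical to part I. (I would also remark that, unlike in \cite{[L3]}, the resulting functions need not separate points or tangents because of possible positive-dimensional compact subvarieties, but by Corollary~\ref{cor22} these live in a fixed ball $B(p,R_0)$, so the image $F(M)\subset\mathbb C^N$ is still an $n$-dimensional irreducible affine variety, which is what is needed downstream.)
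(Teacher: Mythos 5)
Your proposal is correct and follows essentially the same route as the paper: the paper's proof of Proposition \ref{p41} simply reruns the part I argument (Proposition \ref{p31}) with Corollary \ref{cor21} and the sup-norm three circle theorem replaced by Donaldson--Sun's Lemma \ref{l41}, and with the dimension bound $\dim\mathcal{O}_d(M)\leq Cd^n$ supplied by Corollary \ref{cor24}, exactly as you do. The only point the paper states more explicitly is that one should increase the degree $d_0$ in (\ref{38}) if necessary so that $d_0\notin D$, which your choice of $\alpha$ outside the spectrum implicitly addresses.
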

 \begin{proof}The argument is almost the same as proposition \ref{p31}. We only indicate the minor differences. 
 
 \medskip
 
 1. In (\ref{38}), by increasing $d_0$ if necessary, we may assume $d_0\notin D$, the spectrum of harmonic functions on $C(M)$. Recall between (\ref{313}) and claim \ref{cl31}, when we pass $h^i_j$ to $M_\infty$, corollary \ref{cor21} is used. Now we replace corollary \ref{cor21} by lemma \ref{l41}.
 
 \medskip
 
 2. We still have $dim(\mathcal{O}_d(M))\leq Cd^n$, by corollary \ref{cor24}.
 
 \medskip
 
 3. In the proof of claim $6.1$ of \cite{[L3]}, the three circle theorem used between ($6.90$) and ($6.91$) should be replaced by lemma \ref{l41}.

 \end{proof}
 
 By exactly the same argument as in part I, we can prove that $M$ is biholomorphic to a Zariski open set of a Moishezon manifold.
 
 \bigskip

Now we prove corollary \ref{cor1}.
  In the Ricci flat case, we can apply the result of Colding-Minicozzi \cite{[CM]} to see the uniqueness of the tangent cone. Hence, part II of theorem \ref{thm1} can be applied. Alternatively, we can also adapt an argument of Donaldson-Sun. Basically from proposition $2.21$ of \cite{[DS2]}, we see that the holomorphic spectrum on tangent cone must be algebraic numbers. Hence lemma \ref{l41} can still be applied. The first statement of corollary \ref{cor1} is a consequence of theorem \ref{thm1}, part II.

The degeneration argument follows from \cite{[DS2]} and \cite{[CSW]}. For reader's convenience, let us give a sketch. Let $V_j = \mathcal{O}_{d_j}(M)$, where $0=d_0<d_1<d_2<..$ and for any $\alpha$ strictly between $d_j$ and $d_{j+1}$, $\mathcal{O}_{\alpha}(M) = \mathcal{O}_{d_j}(M)$. 
Pick basis $f_1, .., f_{m_1}$ of $V_1$. Inductively, we add basis $f_{m_{j-1}+1}, .., f_{m_j}$ to $V_j$.  Let $W_j$ be the span of $\{f_{m_{j-1}+1}, ..., f_{m_j}\}$.

\begin{lemma}\label{l42}
$dim(\mathcal{O}_d(M)) = dim(\mathcal{O}_d(C(M))$ for any $d$.
\end{lemma}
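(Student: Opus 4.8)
Proof proposal for Lemma 4.3 ($\dim(\mathcal{O}_d(M)) = \dim(\mathcal{O}_d(C(M)))$ for any $d$).

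The plan is to prove the two inequalities separately by a blow-down/blow-up argument along the scaling sequence $(M_i, p_i, d_i) = (M, p, \frac{d}{\mu_i}) \to C(M)$. First I would observe that since $M$ is Ricci flat with Euclidean volume growth, $C(M)$ is independent of the scaling sequence, so I may take \emph{any} $\mu_i \to \infty$ and all limits refer to the same cone. The key analytic tools are already in place: the three-circle-type control of Lemma~\ref{l41} (valid because, as noted after Corollary~\ref{cor1}, the holomorphic spectrum of the tangent cone consists of algebraic numbers, so one can choose a degree $\alpha \notin D$ arbitrarily close to any prescribed bound), the dimension bound $\dim(\mathcal{O}_d(M)) \leq Cd^n$ and likewise on $C(M)$ from Corollary~\ref{cor24}, and the $L^2$ estimates for $\bar\partial$ using the plurisubharmonic weight $w_i = \log(u+1)$ from Proposition~\ref{p3} (exactly as in the proof of Proposition~\ref{p31}).

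For the inequality $\dim(\mathcal{O}_d(M)) \leq \dim(\mathcal{O}_d(C(M)))$: take a basis $g_1, \dots, g_N$ of $\mathcal{O}_d(M)$ with $N = \dim(\mathcal{O}_d(M))$; I may assume $d \notin D$. Rescale and choose, for the sequence $\mu_i \to \infty$, an $L^2(B(p_i,1))$-orthonormal basis $h_1^i, \dots, h_N^i$ of the span, with $\dashint_{B(p_i,1)} h_j^i \overline{h_k^i} = \delta_{jk}$. By Lemma~\ref{l41} (replacing Corollary~\ref{cor21} exactly as in part~II) together with the mean value inequality, the $h_j^i$ have uniformly controlled growth on every fixed ball, so after passing to a subsequence $h_j^i \to h_j$ uniformly on compact subsets of $C(M)$, with each $h_j \in \mathcal{O}_{d}(C(M))$ (the growth rate being preserved because $d$ avoids the spectrum, so no degree can drop or jump across it). Linear independence of the limits $h_j$ follows from the $L^2$-orthonormality on $B(p_\infty, 1)$ being preserved in the limit (Cheeger--Colding convergence of measures). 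Hence $\dim(\mathcal{O}_d(C(M))) \geq N$.

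For the reverse inequality $\dim(\mathcal{O}_d(C(M))) \leq \dim(\mathcal{O}_d(M))$: start with linearly independent $\varphi_1, \dots, \varphi_m \in \mathcal{O}_d(C(M))$, normalized on $B(p_\infty, 1)$. Approximate each $\varphi_k$ on a large annulus $B(p_i, \Lambda) \setminus B(p_i, \Lambda^{-1})$ by an $L^2$-approximating holomorphic function on $M_i$ (Gromov--Hausdorff closeness plus Cheeger--Colding's construction of approximating holomorphic charts), cut off with $\eta_i$, and correct by solving $\bar\partial v_i = \bar\partial(\eta_i \varphi_k^{(i)})$ with the weight $C w_i$ as on p.~803 of \cite{[L3]}; the $L^2$ bound on $v_i$ forces $v_i$ small on the relevant region, producing genuine holomorphic functions $g_k^i$ on $M$ that are close to $\varphi_k$ after rescaling, and Lemma~\ref{l41} upgrades the $L^2$ growth bound to the polynomial bound $|g_k^i| \le C(1+r)^{d+\epsilon}$, i.e. $g_k^i \in \mathcal{O}_{d}(M)$ (again using $d \notin D$). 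Their $L^2$-near-orthonormality on $B(p_i,1)$ makes them linearly independent, so $\dim(\mathcal{O}_d(M)) \geq m$. Combining the two inequalities gives equality.

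The main obstacle I anticipate is the second inequality — specifically, ensuring that the $\bar\partial$-corrected functions on $M$ actually lie in $\mathcal{O}_d(M)$ with the \emph{same} degree $d$ rather than a larger one. This is where the hypothesis that the cone's holomorphic spectrum is algebraic (hence one can pick threshold degrees $\alpha \notin D$ arbitrarily close to $d$) and the three-circle inequality of Lemma~\ref{l41} are essential: one controls $I(g_k^i, R)/I(g_k^i, R/2)$ at the limiting scale, transports this by Lemma~\ref{l41} to all scales, and concludes the sharp polynomial bound. The analogous point was handled in part~II, so the argument is a combination of the techniques of Proposition~\ref{p31}, Proposition~\ref{p41}, and the estimates of \cite{[DS2]}; the only genuinely new bookkeeping is matching the degrees on both sides.
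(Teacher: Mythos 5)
Your first inequality ($\dim\mathcal{O}_d(M)\leq\dim\mathcal{O}_d(C(M))$) is fine and is essentially the mechanism the paper already uses in Propositions \ref{p31} and \ref{p41} (rescale, take an $L^2(B(p_i,1))$-orthonormal basis, control the doubling ratios by Lemma \ref{l41}, pass to the measured Gromov--Hausdorff limit); since the paper's own proof of Lemma \ref{l42} is only a citation of Proposition 3.26 of \cite{[DS2]}, this half is a faithful reconstruction. The reverse inequality, however, has a genuine gap as written. Your $\bar\partial$-correction is carried out with the weight $Cw_i=C\log(u+1)$, but $u$ from Proposition \ref{p3} is defined and plurisubharmonic only on $B(p_i,\lambda_i)$, i.e.\ on a large but finite ball $B(p,\lambda_i\mu_i)$ in $M$. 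The corrected functions $g^i_k$ are therefore holomorphic on that ball only; they are not ``genuine holomorphic functions on $M$,'' and there is no global strictly psh weight of logarithmic growth available at this stage to solve $\bar\partial$ on all of $M$ with the sharp degree. Moreover Lemma \ref{l41} cannot ``upgrade'' anything here: it transfers a doubling bound from an outer scale to inner scales, so it controls $g^i_k$ only inside the ball where it is defined and says nothing about growth toward spatial infinity, which is exactly what membership in $\mathcal{O}_d(M)$ requires. So the objects you produce do not lie in $\mathcal{O}_d(M)$, and the conclusion $\dim\mathcal{O}_d(M)\geq m$ does not follow.

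The natural repair is not the diagonal globalization you hint at (letting $\lambda_i\mu_i\to\infty$ and extracting a locally uniform limit): the three-circle machinery gives uniform upper bounds, but linear independence of the $g^i_k$ is only known at the moving scale $B(p,\mu_i)$, and without lower bounds at a fixed scale the limits may vanish or become dependent. The route actually taken in \cite{[DS2]} (and implicitly by the paper, via the same scheme as Claim \ref{cl31}) avoids globalizing altogether: one shows that the rescaled limits $h_j$ of a fixed basis of $\mathcal{O}_d(M)$ already span $\mathcal{O}_d(C(M))$. Concretely, given a homogeneous $\varphi\in\mathcal{O}_d(C(M))$, its cut-off transplant corrected by $\bar\partial$ on $B(p_i,\lambda_i)$ is a ball-defined function with degree-$d$ doubling control that converges back to $\varphi$; the Claim \ref{cl31}/Claim 6.1-of-\cite{[L3]} argument (with the three circle theorem replaced by Lemma \ref{l41}) then forces such limits to lie in $\mathrm{span}\{h_j\}$, giving $\dim\mathcal{O}_d(C(M))\leq\dim\mathcal{O}_d(M)$ without ever constructing new global functions on $M$. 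Your reduction to $d\notin D$ is harmless (both sides are right-continuous in $d$ and the spectrum is discrete), but the reverse inequality needs to be rerouted as above.
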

\begin{proof}
The proof is the same as in proposition $3.26$ of \cite{[DS2]}. 
\end{proof}

\begin{lemma}\label{l43}$\oplus_{k\geq 0}\mathcal{O}_{d_{k}}(M)\slash \mathcal{O}_{d_{k-1}}(M)$ is finitely generated.
\end{lemma}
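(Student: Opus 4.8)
The plan is to build a finitely generated graded ring out of polynomial growth holomorphic functions on $M$ and show that the associated graded $\oplus_{k\ge 0}\mathcal O_{d_k}(M)/\mathcal O_{d_{k-1}}(M)$ coincides, as a graded ring, with the corresponding object on the tangent cone $C(M)$, which is finitely generated because $C(M)$ is a normal affine algebraic variety. First I would recall from part II (and Proposition \ref{p31}/Proposition \ref{p41}) that $M$ is biholomorphic to a Zariski open subset of a Moishezon manifold and that the ring $R(M)=\bigcup_d\mathcal O_d(M)$ of polynomial growth holomorphic functions is finitely generated, equal to the affine coordinate ring of the normalization $X$. So $R(M)$ is a finitely generated $\mathbb C$-algebra; the content of the lemma is that the \emph{filtration} by degree behaves well, i.e. the Rees/associated-graded construction stays finitely generated.

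The key step is Lemma \ref{l42}: $\dim\mathcal O_d(M)=\dim\mathcal O_d(C(M))$ for all $d$. Granting this, for each $k$ choose a basis of $W_k$, a complement of $\mathcal O_{d_{k-1}}(M)$ in $\mathcal O_{d_k}(M)$, and consider the leading-term (symbol) map sending $f\in\mathcal O_{d_k}(M)$ to its image in $\mathrm{gr}_{d_k}:=\mathcal O_{d_k}(M)/\mathcal O_{d_{k-1}}(M)$. Following Donaldson--Sun, I would use the fact that the tangent cone is unique together with Lemma \ref{l41} (Donaldson--Sun's three circle theorem) to show that the product of two functions of degrees $d_j$ and $d_k$ has degree exactly $d_j+d_k$ whenever the symbols are nonzero and to identify the symbol of a product with the product of symbols; this makes the symbol map a ring homomorphism onto the associated graded, and Lemma \ref{l42} (applied in each degree) forces it to match the degree filtration on the affine cone's coordinate ring $\mathbb C[C(M)]$. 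Concretely: rescaling $f_i$ at scale $\mu\to\infty$ and using Lemma \ref{l41} one extracts a nonzero homogeneous holomorphic function on $C(M)$ of degree $d_k$, and the $\dim$ equality of Lemma \ref{l42} guarantees no degrees collapse and no dimension is lost, so $\mathrm{gr}\,R(M)\cong \mathbb C[C(M)]$ as graded rings (up to finite normalization, as in part I, using Proposition \ref{p2} to absorb the normalization into finitely many polynomial growth functions).

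Finally, $C(M)$ is a metric cone which by Proposition \ref{p5} is homeomorphic to a normal affine algebraic variety; its coordinate ring is a finitely generated graded $\mathbb C$-algebra, generated in finitely many degrees $d_{k_1},\dots,d_{k_s}$. Pulling back a finite generating set under the isomorphism $\mathrm{gr}\,R(M)\cong\mathbb C[C(M)]$ exhibits $\oplus_{k\ge 0}\mathcal O_{d_k}(M)/\mathcal O_{d_{k-1}}(M)$ as finitely generated. The main obstacle I expect is establishing Lemma \ref{l42} and the multiplicativity of degrees: one must rule out that the leading symbol of a product vanishes, i.e. that $\deg(fg)<\deg f+\deg g$, which is exactly where the uniqueness of the tangent cone and Donaldson--Sun's refined three circle inequality (Lemma \ref{l41}) are essential — without uniqueness the ``leading term at infinity'' is not well defined and the graded structure could fail to be finitely generated. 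The existence of compact positive-dimensional subvarieties (which do not occur for the cone) is handled exactly as before via Corollary \ref{cor22}, and does not affect the ring of polynomial growth functions.
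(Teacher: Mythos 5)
There is a genuine gap at the pivotal step of your sketch. You derive finite generation from a claimed isomorphism of graded rings $\mathrm{gr}=\oplus_{k}\mathcal O_{d_k}(M)/\mathcal O_{d_{k-1}}(M)\cong \mathbb C[C(M)]$, obtained by sending a function to its ``leading term'' on the unique tangent cone and invoking lemma \ref{l41} and lemma \ref{l42}. But the leading term is not canonically defined, and the symbol map is not known (and cannot be shown by this soft argument) to be a ring homomorphism. The rescaled limits of a fixed $f$ exist only after passing to subsequences of scales and after renormalization: in the paper one $L^2$-orthonormalizes a basis at each scale $\mu_i$ (see proposition \ref{p42} and the discussion after it), and uniqueness of the metric tangent cone does not fix these choices, because the identifications of different scales with $C(M)$ differ by symmetries of the cone and the normalizing change-of-basis matrices need not converge in $GL(N,\mathbb C)$. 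Normalizing $f$ by $\mu_i^{-d(f)}$ may give a zero or infinite limit, while normalizing each $f$ by its own norm destroys linearity and multiplicativity. What survives these ambiguities is exactly what the paper proves: the points $[W_i]$ converge to $[C(M)]$ only up to the $K$-action (proposition \ref{p43}), which yields a flat degeneration from $W=\operatorname{Spec}\oplus_k\mathcal O_{d_k}(M)/\mathcal O_{d_{k-1}}(M)$ to $C(M)$, not an isomorphism. If your isomorphism were available, the second of the ``two step degenerations'' in corollary \ref{cor1} would always be trivial, contrary to the structure theory of \cite{[DS2]} and \cite{[CSW]}, where $W$ and $C(M)$ can genuinely differ. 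Note also that lemma \ref{l42} only matches dimensions degree by degree; agreement of Hilbert functions with the finitely generated ring $\mathbb C[C(M)]$ does not by itself transfer finite generation in the absence of a multiplicative comparison map, which is precisely the missing ingredient.

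For comparison, the paper does not attempt any identification of the associated graded ring with $\mathbb C[C(M)]$: it proves the lemma by running the argument of proposition $3.14$ and lemma $3.15$ of \cite{[DS2]}, which establishes finite generation of $\oplus_k\mathcal O_{d_k}(M)/\mathcal O_{d_{k-1}}(M)$ directly from the convergence theory (discreteness of the degrees, lemma \ref{l41}, and the dimension comparison of lemma \ref{l42}), with the relation to the tangent cone appearing only afterwards, as a degeneration via the multigraded Hilbert scheme. If you want a self-contained proof you should follow that route; the comparison with $\mathbb C[C(M)]$ can only be expected to produce a degeneration, not the graded isomorphism your argument needs.
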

\begin{proof}  The argument is the same as in proposition $3.14$ and lemma $3.15$ of \cite{[DS2]}. 
\end{proof}

Choose $j$ large so that $\oplus_{k\geq 0}\mathcal{O}_{d_{k}}(M)\slash \mathcal{O}_{d_{k-1}}(M)$ is generated by $\oplus_{j\geq k\geq 0}\mathcal{O}_{d_{k}}(M)\slash \mathcal{O}_{d_{k-1}}(M)$.
Furthermore, we require that $F=(f_1, .., f_{m_1}, ..., f_{m_{j-1}+1}, .., f_{m_j})$ is a biholomorphism outside compact set of $M$ to its image in $\mathbb{C}^N$, where $N = m_j$.
Let $X= F(M)$. As we see before, $X$ is a normal affine algebraic variety. $F$ just contracts finitely many compact subvarieties to points on $X$.

Recall definition \ref{d1}.
Consider a degeneration of $X$ in $\mathbb{C}^N$ given by $\sigma_t(f_k) = t^{deg(f_k)}f_k$, where $t\to 0^+$ as positive real numbers. Then $X$ degenerates to the affine algebraic variety $W = $ Spec $\oplus_{k\geq 0}\mathcal{O}_{d_{k}}(M)\slash \mathcal{O}_{d_{k-1}}(M)$.

Take a sequence $r_i\to \infty$. Define $(M_i, p_i, d_i) = (M, p, \frac{d}{r_i})$. Assume $(M_i, p_i, d_i)$ converges in the pointed Gromov-Hausdorff sense to a tangent cone $C(M)=(M_\infty, p_\infty, d_\infty)$ at infinity. 

\begin{prop}\label{p42}
For any $g\in W_j, \lim\limits_{r\to\infty}\frac{I(g, 2r)}{I(g, r)} = 2^{d_j}$.
\end{prop}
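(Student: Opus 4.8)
The plan is to show that the average $L^2$-growth rate of any $g \in W_j$ is exactly $2^{d_j}$, using lemma \ref{l41} (the Donaldson–Sun three circle theorem) together with lemma \ref{l42} (the dimension match with the cone). By definition of $W_j$ and of the filtration $V_k = \mathcal{O}_{d_k}(M)$, every nonzero $g \in W_j$ satisfies $\deg(g) = d_j$: it lies in $\mathcal{O}_{d_j}(M)$ but its image in $\mathcal{O}_{d_j}(M)/\mathcal{O}_{d_{j-1}}(M)$ is nonzero, so $g \notin \mathcal{O}_{\alpha}(M)$ for $\alpha < d_j$. Thus I first need to see that $\deg(g) = d_j$ forces the $L^2$-growth ratio $I(g,2r)/I(g,r)$ to converge to $2^{d_j}$; this is where the spectral gap enters.

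First I would record the monotonicity/upper bound direction. Fix a value $\alpha$ slightly larger than $d_j$ with $\alpha \notin D$ (possible since $D$ is discrete, by the discussion opening section 4). Since $g \in \mathcal{O}_{d_j}(M) \subset \mathcal{O}_{\alpha}(M)$, $g$ has polynomial growth of order at most $\alpha$, hence $I(g,r) \le C r^{2\alpha}$ for all large $r$; in particular there are arbitrarily large radii $R$ with $I(g,2R)/I(g,R) \le 2^{2\alpha}$, but I want the sharper statement. Here is the mechanism: by lemma \ref{l41}, for $R$ large, if $I(g,2R)/I(g,R) \le 2^{\alpha}$ then $I(g,R)/I(g,R/2) < 2^\alpha$. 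Iterating downward, once the ratio dips at or below $2^\alpha$ at one scale it stays below $2^\alpha$ at all smaller (large) scales; contrapositively, if the ratio ever exceeds $2^\alpha$ it does so at all larger scales, which would force growth of order $> \alpha$, contradicting $g \in \mathcal{O}_{\alpha}(M)$ whenever $\alpha > d_j$. Hence $I(g,2r)/I(g,r) \le 2^\alpha$ for all large $r$, and letting $\alpha \downarrow d_j$ through values avoiding $D$ gives $\limsup_{r\to\infty} I(g,2r)/I(g,r) \le 2^{d_j}$.

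For the matching lower bound, I would combine the dimension count with the growth-rate dichotomy. Suppose, toward a contradiction, that $\liminf_{r\to\infty} I(g,2r)/I(g,r) < 2^{d_j}$ for some $g \in W_j \setminus \{0\}$. Pick $\beta < d_j$ with $\beta \notin D$ and such that along a subsequence $I(g,2R)/I(g,R) \le 2^{\beta}$; by the same Donaldson–Sun iteration (applied with parameter $\beta$) this propagates to all large scales, yielding $I(g,r) \le C r^{2\beta}$, i.e. $g \in \mathcal{O}_{\beta}(M) = \mathcal{O}_{d_{j-1}}(M)$ since $\beta$ lies strictly between $d_{j-1}$ and $d_j$ and $\mathcal{O}_\alpha$ is constant on that interval. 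But that contradicts $g \in W_j$, whose image in $\mathcal{O}_{d_j}(M)/\mathcal{O}_{d_{j-1}}(M)$ is nonzero by construction. Therefore the $\liminf$ is $\ge 2^{d_j}$, and together with the first part, $\lim_{r\to\infty} I(g,2r)/I(g,r) = 2^{d_j}$.

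The main obstacle is making the iteration of lemma \ref{l41} rigorous and confirming the precise quantitative consequence: lemma \ref{l41} is stated for a single radius $R$, so I need to check that the "$R$ large enough" threshold is uniform once $g$ is fixed, that the downward iteration can be started (there exists at least one scale where the ratio is $\le 2^\alpha$, which follows from the polynomial growth bound plus the observation that the ratios cannot all exceed $2^\alpha$), and that passing from the ratio bound to the pointwise growth bound $I(g,r) \le Cr^{2\alpha}$ is legitimate. I also want to double-check that "$\deg(g) = d_j$" — defined via $\mathcal{O}_d$ with the $\epsilon$-slack in definition \ref{d1} — is genuinely equivalent to the filtration statement $g \in \mathcal{O}_{d_j}(M) \setminus \mathcal{O}_{d_{j-1}}(M)$; this is exactly the reason the $d_k$ are chosen so that $\mathcal{O}_\alpha(M)$ is constant on each open interval $(d_{j-1}, d_j)$, so it should go through cleanly. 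Everything else is bookkeeping parallel to proposition $3.26$ of \cite{[DS2]}.
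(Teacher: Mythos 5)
Your proposal is essentially the paper's own proof: the paper likewise picks $\epsilon$ small with $d_j\pm\epsilon$ off the spectrum $D$, extracts sequences of radii where the ratio is $<2^{d_j+\epsilon}$ (from $g\in\mathcal{O}_{d_j}(M)$) and $>2^{d_j-\epsilon}$ (from $g\notin\mathcal{O}_{d_j-\epsilon}(M)$, i.e.\ nonzero image in the quotient), and then lets $\epsilon\to 0$ using lemma \ref{l41}; your dyadic propagation and the contradiction framing of the lower bound just spell out what the paper leaves implicit. The one caveat you yourself flag --- the factor-of-two mismatch between the pointwise degree in definition \ref{d1} and the $L^2$-average ratio $2^{\alpha}$ --- is equally present in the paper's proof and is a normalization convention inherited from \cite{[DS2]}, not a gap particular to your argument.
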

\begin{proof}Let $\epsilon>0$ be a small number.
Since $g\in W_j$, we can find a sequence $R_{i, \epsilon}\to\infty$ so that $\frac{I(g, 2R_{i, \epsilon})}{I(g, R_{i, \epsilon})}<2^{d_j+\epsilon}$. 
As $g\not\in \mathcal{O}_{d_j-\epsilon}(M)$, we can find a sequence $R'_{i, \epsilon}\to\infty$ so that $\frac{I(g, 2R'_{i, \epsilon})}{I(g, R'_{i, \epsilon})}>2^{d_j-\epsilon}$.
By the discreteness of the spectrum of the tangent cone,  $d_j-\epsilon, d_j+\epsilon$ are not on the spectrum of harmonic functions of $C(M)$. By letting $\epsilon\to 0$, we obtain the proposition from lemma \ref{l41}.

\end{proof}

For any $k\in\mathbb{N}$, let $f^i_{m_k+1}, .., f^i_{m_{k+1}}$ be a basis of $W_k$ so that $\dashint_{B(p_i, 1)} f^i_s\overline{f^i_l} = \delta_{sl}$, where $m_k+1\leq s, l\leq m_{k+1}$. Proposition \ref{p42} and lemma $3.6$ of \cite{[DS2]} imply that after passing to subsequence, $f^i_s$ converges, uniformly in each compact set of $C(M)$, to a homogeneous polynomial growth holomorphic function $f^\infty_s$ of degree $d_k$. Furthermore, since any homogeneous holomorphic function with different degree are $L^2$ orthogonal on $B(p_\infty, 1)$, we conclude for any $1\leq a, b\leq m_{j}$, $\dashint_{B(p_\infty, 1)}f^\infty_a\overline{f^\infty_b} = \delta_{ab}$.
Lemma \ref{l42} implies that $f^\infty_1, .., f^\infty_{m_j}$ is a basis of $\mathcal{O}_{d_j}(C(M))$.

The tangent cone of $C(M)$ admits a natural $\mathbb{T}$ action generated by $r\frac{\partial}{\partial r}$ and $\sqrt{-1}rJ\frac{\partial}{\partial r}$. For any homogeneous polynomial growth holomorphic function $f$ of degree $a$, $r\frac{\partial}{\partial r}f = af$, $rJ\frac{\partial}{\partial r}f = \sqrt{-1}af$.
Spec $C(M)$ has a natural grading by the degree of homogeneous polynomial growth holomorphic functions. Note by lemma \ref{l42}, the grading is the same as Spec $W$.
Thus there is also a natural $\mathbb{T}$ action on $W$, with the same Hilbert function as $C(M)$.

According to \cite{[DS2]}\cite{[HS]}, there is a multi-graded Hilbert scheme \textbf{Hilb}, which is a projective scheme parametrizing polarized affine schemes in $\mathbb{C}^N$ invariant under the $\mathbb{T}$ action with fixed Hilbert function. We can consider the embedding of $W$ and $C(M)$ to $\mathbb{C}^N$ by $([f^i_1], ..., [f^i_{m_j}])$ and $(f^\infty_1, .., f^\infty_{m_j})$, where $[\cdot]$ means the quotient in $\mathcal{O}_{d_{k}}(M)/\mathcal{O}_{d_{k-1}}(M)$. These define points $[W_i]$ and $[C(M)]$ in \textbf{Hilb}.

Let $G$ be the linear transformations of $\mathbb{C}^N$ that commutes with $\mathbb{T}$ action. Let $K = G\cap U(N)$, where $U(N)$ is the unitary group.
With the same proof as in proposition $3.16$ of \cite{[DS2]}, we obtain 
\begin{prop}\label{p43}
$[W_i]$ converges to $[C(M)]$, up to $K$ action. 
\end{prop}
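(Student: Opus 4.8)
The plan is to follow the scheme of Donaldson--Sun's proposition $3.16$ in \cite{[DS2]}, adapted to the present setting. The starting point is that both $[W_i]$ and $[C(M)]$ live in the fixed multi-graded Hilbert scheme \textbf{Hilb}, which is a projective scheme; hence the sequence $[W_i]$ has a convergent subsequence, and it suffices to identify every subsequential limit with $[C(M)]$ modulo the $K$ action. Fix such a limit point $[W_\infty]$. The coordinates of the embedding $[W_i]$ are the classes $[f^i_s]$ in $\mathcal{O}_{d_k}(M)/\mathcal{O}_{d_{k-1}}(M)$, normalized by $\dashint_{B(p_i,1)}f^i_s\overline{f^i_l}=\delta_{sl}$, while the coordinates of $[C(M)]$ are the homogeneous functions $f^\infty_s$ of degree $d_k$ obtained (after passing to a subsequence) as the Gromov--Hausdorff limits of the $f^i_s$, and which by Lemma \ref{l42} form a basis of $\mathcal{O}_{d_j}(C(M))$.

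The key step is to match the defining ideals. First I would describe the ideal of $W_i$ concretely: a polynomial relation among the coordinate functions on $W_i=\mathrm{Spec}\,\bigoplus_k \mathcal{O}_{d_k}(M)/\mathcal{O}_{d_{k-1}}(M)$ corresponds, by the very construction of $W$ as the associated graded ring, to an actual holomorphic relation among the $f^i_s$ on $M$ whose ``leading term'' (top degree part) vanishes; by Lemma \ref{l43} the graded ring is finitely generated, so finitely many relations, taken among the generators $f_1,\dots,f_{m_j}$, cut out $W$, and these relations are the same for all $i$ (only the normalizing constants change). Passing $i\to\infty$: uniform convergence of $f^i_s\to f^\infty_s$ on compact subsets of $C(M)$, together with the three-circle estimate of Lemma \ref{l41} and Proposition \ref{p42} (which pin down the growth rate and hence the $L^2$ normalizations up to bounded factors), shows that any polynomial relation holding on $W_i$ passes to a relation holding on the limit embedding, and conversely that homogeneity of the $f^\infty_s$ forces the limit ideal to be generated by homogeneous elements agreeing with those of $W$. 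This identifies $[W_\infty]$ with the point $[C(M)]'$ defined by the particular basis $f^\infty_1,\dots,f^\infty_{m_j}$ of $\mathcal{O}_{d_j}(C(M))$ arising as the limit.

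Finally I would remove the dependence on the chosen basis: any two orthonormal (with respect to $\dashint_{B(p_\infty,1)}$) bases of $\mathcal{O}_{d_j}(C(M))$ that are each graded by the degree $d_k$ are related by an element of $G\cap U(N)=K$, since a unitary change of basis preserving the grading is exactly an element of $K$. Hence the point in \textbf{Hilb} cut out by $(f^\infty_1,\dots,f^\infty_{m_j})$ equals $[C(M)]$ up to $K$ action, and therefore $[W_\infty]=[C(M)]$ modulo $K$. Since this holds for every subsequential limit, the full sequence $[W_i]$ converges to $[C(M)]$ up to $K$ action, completing the proof.

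The main obstacle I anticipate is the passage to the limit of the defining ideals: one must ensure that no relations are ``lost'' (a relation on $W_i$ degenerating to the trivial relation) and none are ``gained'' (the limit satisfying extra relations), and this is where the uniform growth control from Lemma \ref{l41} and Proposition \ref{p42}, together with the equality of Hilbert functions from Lemma \ref{l42}, is essential — it forces the limit scheme to have the correct Hilbert function, so that it is a genuine degeneration with no dimension drop, and the inclusion of ideals forced by continuity must in fact be an equality. This is exactly the content imported from proposition $3.16$ of \cite{[DS2]}, and the only new input needed here is that the preliminary results of Section $2$ (in particular Corollary \ref{cor22} on compact subvarieties and Corollary \ref{cor24} on dimension bounds) guarantee the hypotheses of that argument are met in our more general curvature setting.
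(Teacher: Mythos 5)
Your proposal is correct and follows essentially the same route as the paper, which simply invokes the proof of Proposition $3.16$ of \cite{[DS2]}: compactness of the multigraded Hilbert scheme, convergence of the $L^2$-orthonormalized bases to homogeneous functions on the cone (Proposition \ref{p42} and Lemma \ref{l41}), passage of the defining relations to the limit, equality of Hilbert functions (Lemma \ref{l42}) to force the limit ideal to coincide with that of $C(M)$, and the residual unitary basis ambiguity accounting for the $K$-action. The only minor imprecision—the relations cutting out $[W_i]$ differ from those of a fixed embedding of $W$ by a block-diagonal (degree-preserving) change of basis rather than by mere rescaling of constants—does not affect the argument.
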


By Matsushima's theorem (proposition $4.9$ of \cite{[DS2]}), $Aut(C(M))$ preserving $\mathbb{T}$ is reductive. By general theory, there exists a degeneration from $W$ to $C(M)$. 
This completes the proof of corollary \ref{cor1}.
\begin{remark}
We can apply the Luna slice theorem as in \cite{[DS2]} to prove that all tangent cones of $M$ at infinity are isomorphic as affine algebraic varieties. By the uniqueness of the K\"ahler-Einstein metric, we find that the tangent cone is unique. This recovers Colding-Minicozzi's result \cite{[CM]} in the K\"ahler case.
\end{remark}

\section{\bf{Proof of theorem \ref{thm1}, part III}} 

In this section, we assume the K\"ahler manifold $M^n (n\geq 2)$ has positive Ricci curvature, maximal volume growth and $|Rm|\leq\frac{C}{r^2}$. The only difference from \cite{[Mo]} is the absence of $\int_MRic^n<\infty$. We follow Mok's approach in \cite{[Mo]}. In fact, the argument is almost the same as in \cite{[Mo]}. For completeness, we shall include some details, with emphasis on the difference.

The main strategy is to consider the embedding by pluri-anticanonical sections with polynomial growth. 
\begin{definition}
Let $K$ be the canonical line bundle of $M$. Define $P^{q}_d(M) = \{f\in \Gamma(M, K^{-q})| |f|\leq C(1+r)^d\}$ for some $C>0$. Set $P(M)=\cup_{q>0, d>0}P^{q}_d(M)$.\end{definition}

Following Mok \cite{[Mo]}, let us divide the proof into several steps:

\bigskip

\emph{Step 1: Uniform multiplicity estimate for pluri-anticanonical system}

\begin{prop}\label{p51}
Let $(M, p)$ be a complete noncompact K\"ahler manifold with nonnegative Ricci curvature and maximal volume growth. Assume further that $|Rm|\leq \frac{C}{r^2}$. For any nonzero $s\in P^q_d(M)$, let $V$ be the zero divisor of $s$. Then for any $x$ which is not on any compact subvariety of positive dimension, $Mul_x(V) \leq C(d+q)$ for some constant $C$ independent of $s$ and $x$.
\end{prop}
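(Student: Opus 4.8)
The plan is to reduce Proposition \ref{p51} to Proposition \ref{p29}, which already handles pluri-anticanonical sections whose norm is controlled after multiplying by the weight $e^{q\rho'}$. The bridge is Proposition \ref{p28}: since $|Rm|\le C/r^2$ forces $S\le C/r^2$, that proposition gives a function $\rho'$ with $\Delta\rho'=\pi S$ and $c\log(r+1)-C\le \rho'\le C\log(r+2)$. Hence for $s\in P^q_d(M)$ we have $|s\,e^{q\rho'}|\le |s|\,e^{qC\log(r+2)}\le C(1+r)^{d+qC}=C(1+r)^{d'}$ with $d'\le C(d+q)$. So $s$ satisfies the hypothesis of Proposition \ref{p29} with exponent $d'$, and that proposition yields $Mul_x(V)\le Cd'\le C(d+q)$ for $x$ not on any positive-dimensional compact subvariety. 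The point I want to check carefully is that the divisor of $s$ as a section of $K^{-q}$ coincides with the divisor $V$ appearing in the Poincaré–Lelong identity used in Proposition \ref{p29} — it does, since multiplying by the nowhere-vanishing smooth weight $e^{q\rho'}$ does not change the zero set or multiplicities of the holomorphic section, only the curvature term in $\frac{\sqrt{-1}}{2\pi}\partial\overline\partial\log|s e^{q\rho'}|^2 = [V]-qRic+\frac{\sqrt{-1}}{2\pi}\partial\overline\partial(q\rho')$, and $\Delta\rho'=\pi S$ exactly cancels the trace of $qRic$ against the scalar curvature bookkeeping in that argument.

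The steps, in order, are: (i) record that $|Rm|\le C/r^2$ implies $S\le C/r^2$, so Proposition \ref{p28} applies and gives the two-sided logarithmic bound on $\rho'$; (ii) from $s\in P^q_d(M)$ and the upper bound $\rho'\le C\log(r+2)$, deduce $|s\,e^{q\rho'}|\le C(1+r)^{d+Cq}$; (iii) apply Proposition \ref{p29} with this new polynomial exponent to conclude $Mul_x(V)\le C(d+Cq)\le C'(d+q)$ for every $x$ not lying on a compact subvariety of positive dimension; (iv) note that the constant $C'$ depends only on $M$ (through the constants of Propositions \ref{p28}, \ref{p29}, hence through $n$ and the asymptotic volume ratio), not on $s$, $q$, or $x$, which is exactly what is claimed.

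I do not expect a genuine obstacle here: the proposition is essentially a corollary of the machinery already built in Section 2, and the role of the hypothesis $|Rm|\le C/r^2$ is precisely (and only) to upgrade the generic scalar-curvature bound $\dashint_{B(p,r)}S\le C r^{2n-2}$ from Proposition \ref{p6} to the pointwise bound $S\le C/r^2$ that Proposition \ref{p28} needs for the lower estimate on $\rho'$ — though in fact the proof of Proposition \ref{p51} only uses the \emph{upper} bound $\rho'\le C\log(r+2)$, which already follows from Proposition \ref{p6} alone. The mild subtlety worth a sentence in the write-up is confirming that the multiplicity $Mul_x(V)$ is insensitive to the choice of local trivialization of $K^{-q}$ and to the smooth weight, so that the quantity estimated in Proposition \ref{p29} is literally the same as the one in the statement. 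Beyond that, the argument is a short deduction.
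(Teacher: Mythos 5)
Your proposal is correct and follows exactly the paper's route: the paper's entire proof of this proposition is that it is ``a combination of proposition \ref{p28} and proposition \ref{p29}'', which is precisely your deduction that $\rho'\le C\log(r+2)$ upgrades $|s|\le C(1+r)^d$ to $|s\,e^{q\rho'}|\le C(1+r)^{d+Cq}$ so that Proposition \ref{p29} gives $Mul_x(V)\le C(d+q)$. One small aside: the hypothesis $|Rm|\le C/r^2$ is not used \emph{only} for the scalar curvature bound feeding Proposition \ref{p28} --- it also supplies the bisectional curvature lower bound $BK\ge -C/r^2$ under which all of the Section 2 machinery (Propositions \ref{p3}, \ref{p4}, \ref{p6}, \ref{p29}) is stated --- but this does not affect the validity of your argument.
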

\begin{remark}
Compare theorem $2.3$ of \cite{[Mo]}, where it was proved that the multiplicity estimate holds except for $x$ on a disjoint union of compact subvarieties depending on $s$. Notice that the union of these compact subvarieties could be zero dimensional and noncompact.
\end{remark}
\begin{proof}
This is just a combination of proposition \ref{p28} and proposition \ref{p29}.
\end{proof}

\bigskip

\emph{Step 2:  Quasi-embedding into a projective variety}

\medskip

Let $R(M)$ be the degree zero part (rational functions) of the quotient field of $P(M)$.
Similar as proposition $3.2$ of \cite{[Mo]} , we have 
\begin{prop}\label{p52}
$R(M)$ has transcendental dimension $n$ over $\mathbb{C}$. Moreover, $R(M)$ separates points and tangents on $M$. In particular, $R(M)$ a finite extension over $\mathbb{C}(f_1, .., f_n)$ for some algebraically independent rational functions $f_1, .., f_n\in R(M)$.
\end{prop}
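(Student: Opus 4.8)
The plan is to follow Mok's proof of Proposition 3.2 in \cite{[Mo]}, substituting the uniform multiplicity bound of Proposition \ref{p51} for his integrability hypothesis $\int_M Ric^n<\infty$. The inequality $\mathrm{trdeg}_{\mathbb C}R(M)\le n$ is automatic, since $R(M)$ is a field of meromorphic functions on the $n$-dimensional complex manifold $M$; the real content is the reverse inequality together with the separation statements, and both are produced by a $\bar\partial$-construction of pluri-anticanonical sections of polynomial growth.

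I would first record a dimension estimate and a growth dictionary. The anticanonical bundle $K^{-1}$ carries the Hermitian metric dual to that of the volume form, with Chern curvature equal to the Ricci form and relative weight $\psi$ satisfying $\Delta\psi=\pi S$ up to a harmonic term; comparing with $\rho'$ from Proposition \ref{p28} shows $\psi$ grows at most logarithmically, and, since here $S\le C/r^2$, also at least logarithmically. Hence $e^{-q\psi}$ is bounded above and below by fixed powers of $1+r$, so a holomorphic section of $K^{-q}$ that is $L^2$ for the natural metric against a polynomial weight lies in $\mathcal H^q_d(M)$ for some $d=O(q)$; conversely $P^q_d(M)\subset\mathcal H^q_{d+Cq}(M)$ and $\mathcal H^q_d(M)\subset P^q_d(M)$. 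In particular Corollary \ref{cor24} gives $\dim P^q_d(M)\le C(d+q)^n$, which is the replacement for Mok's finiteness input.

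Next, fix a point $x_0\in M$ not lying on a compact subvariety of positive dimension; by Corollary \ref{cor22} these points form the complement of a fixed ball. For $q$ large I would solve, on the bundle $K^{-q}$, the equation $\bar\partial s=\bar\partial(\chi\,\sigma)$, where $\sigma$ is a local holomorphic section prescribing a chosen $1$-jet at $x_0$ together with vanishing at auxiliary points, $\chi$ is a cutoff supported near those points, and the $L^2$ weight is a Green-type function with logarithmic poles at the chosen points, localized so as to be compactly supported. On that compact support $q\cdot Ric$ dominates the $-C\omega$ lower bound of the singular part once $q\ge q(x_0)$, while away from it the curvature is simply $q\cdot Ric>0$; thus Hörmander's estimate applies, and since the datum is compactly supported and the natural anticanonical metric has at most polynomial size, the resulting global holomorphic section of $K^{-q}$ has polynomial growth, i.e. lies in $P^q_{d'}(M)$ with $d'=O(q)$. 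Prescribing the $1$-jets suitably and invoking Proposition \ref{p51} to keep the base locus finite, I obtain $s_0,\dots,s_n\in P^q_d(M)$ whose ratios $f_j=s_j/s_0$ give a local biholomorphism of a neighborhood of $x_0$ onto an open subset of $\mathbb C^n$. Nonvanishing of the Jacobian of $(f_1,\dots,f_n)$ at $x_0$ forces these functions to be algebraically independent over $\mathbb C$, so $\mathrm{trdeg}_{\mathbb C}R(M)=n$; running the same construction for an arbitrary pair of distinct points, and for arbitrary tangent directions, shows that $R(M)$ separates points and tangents at every point off the compact subvarieties.

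For the last assertion, take $f_1,\dots,f_n\in R(M)$ algebraically independent as above; then $R(M)$ is an algebraic extension of $\mathbb C(f_1,\dots,f_n)$, so it suffices to show $R(M)$ is finitely generated over $\mathbb C$. Here I would argue exactly as in the proof of Proposition \ref{p31}: using the sections just constructed together with the bound $\dim P^q_d(M)\le C(d+q)^n$, one realizes $M$ as biholomorphic, away from a finite union of compact subvarieties, to a Zariski-open subset of the normalization $X$ of an affine algebraic variety whose coordinate ring is generated by finitely many elements of $P(M)$; every element of $R(M)$ is then a rational function on the normal variety $X$, hence $R(M)$ is the function field of $X$ and is finitely generated. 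A finitely generated algebraic extension of $\mathbb C(f_1,\dots,f_n)$ is finite, which completes the proof. The main obstacle is the construction in the third paragraph: one must simultaneously guarantee strict positivity of the weight where the localized singular data sits — handled by enlarging $q$ and using $Ric>0$ — and polynomial growth of the solution at infinity despite the quadratic curvature decay, which is precisely what the logarithmic bound $\rho'\le C\log(r+2)$ of Proposition \ref{p28} supplies, and where the hypotheses $|Rm|\le C/r^2$ and maximal volume growth enter.
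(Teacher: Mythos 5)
Your construction of the sections themselves follows the paper's route (Mok's H\"ormander argument for $L^2$ sections of $K^{-q}$, then the comparison with $\rho'$ from Proposition \ref{p28} to convert $L^2$ bounds into membership in $P(M)$, with Corollary \ref{cor24} supplying the dimension bound), and the separation of points and tangents and the lower bound $\mathrm{trdeg}\,R(M)\geq n$ are handled as in the paper. But there are two genuine gaps. First, the claim that $\mathrm{trdeg}_{\mathbb C}R(M)\leq n$ is ``automatic'' because $R(M)$ consists of meromorphic functions on an $n$-dimensional manifold is false for noncompact manifolds: already on $\mathbb C$ the functions $e^z,e^{z^2}$ are algebraically independent, so a Siegel-type theorem is not available for free. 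This inequality is exactly where the growth hypothesis enters: the paper takes purported algebraically independent $s_1,\dots,s_{n+1}\in P^{q_0}_{d_0}(M)$, observes that their degree-$k$ monomials lie in $P^{kq_0}_{kd_0}(M)$, and contradicts $\dim P^{kq_0}_{kd_0}(M)\leq Ck^n$ (from Propositions \ref{p28} and Corollary \ref{cor24}) by counting. You state precisely this dimension bound in your second paragraph but never use it; as written, the upper bound on the transcendence degree is unproved.

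Second, your argument for the ``finite extension'' assertion is flawed. You propose to show $R(M)$ is finitely generated by running the quasi-embedding argument ``exactly as in the proof of Proposition \ref{p31}.'' That proposition is proved under the part I hypothesis $BK\geq -C/r^{2+\epsilon}$ (via Corollary \ref{cor21}) or, in part II, under uniqueness of the tangent cone (via Lemma \ref{l41}); neither is assumed in part III, and indeed under part III hypotheses one does not even know that nonconstant polynomial growth holomorphic functions exist --- this is precisely why Section 5 works with pluri-anticanonical sections. Moreover, the quasi-embedding of $M$ under part III hypotheses is what Steps 2--4 of Section 5 establish \emph{using} Proposition \ref{p52} as input, so your route is circular. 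The correct (and standard, as in Mok) argument is again the dimension count: for any $g\in R(M)$, the monomials $f_1^{a_1}\cdots f_n^{a_n}g^{j}$ with $a_i\leq k$, $j\leq m$, after clearing denominators, lie in a space $P^{q}_{d}(M)$ with $q+d\leq C(k+m)$, so comparing $(k+1)^n(m+1)$ with $C(k+m)^n$ for $k\gg m$ yields a polynomial relation of degree at most $m_0$ in $g$, with $m_0$ independent of $g$; the uniform degree bound together with the primitive element theorem then gives finiteness of $R(M)$ over $\mathbb C(f_1,\dots,f_n)$. A further minor point: the passage from a weighted $L^2$ bound to the pointwise polynomial bound defining $P(M)$ is not automatic; the paper obtains it from the Poincar\'e--Lelong equation, which makes $|s|^2e^{2q\rho'}$ subharmonic, followed by the mean value inequality --- you should include this step rather than fold it into the ``growth dictionary.''
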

\begin{remark}
In \cite{[Mo]}, the argument requires theorem $2.2$ of \cite{[Mo]}, which involves the finiteness of $\int_M Ric^n$.
\end{remark}
\begin{proof}
By the argument on page $383$ of \cite{[Mo]}, for large $q$, we can find nontrivial $L^2$ holomorphic sections of the pluri-anticanonical bundle $K^{-q}$.  Furthermore, standard $\overline\partial$ estimates imply that these sections separates points and tangents of $M$. Let us verify that these $L^2$ sections are in $P(M)$. Say $s\in \Gamma_{L^2}(M, K^{-q})$. Then $s$ satisfies \begin{equation}\label{51}\frac{\sqrt{-1}}{2\pi}\partial\overline\partial\log |s|^2 = [D]-qRic,\end{equation} where $D$ is the divisor of $s$. By taking the trace, we obtain that \begin{equation}\label{52}\frac{1}{2\pi}\Delta(2q\rho'+\log |s|^2)\geq 0,\end{equation} where $\rho'$ is in proposition \ref{p28}. Therefore, $|s|^2e^{2q\rho'}$ is a subharmonic function on $M$. Note \begin{equation}\label{53}\int_{B(p, 2r)} |s|^2e^{2q\rho'}\leq \sup\limits_{B(p, 2r)}e^{2q\rho'}\int_{B(p, 2r)}|s|^2\leq C(1+r)^{qC}.\end{equation} Mean value inequality and proposition \ref{p28} imply that $s\in P(M)$. Now assume $s_1, .., s_{n+1}\in P(M)$ and they are algebraically independent. By taking some power if necessary, we may assume $s_1, .., s_{n+1}\in P^{q_0}_{d_0}$. According to proposition \ref{p28} and corollary \ref{cor24}, $dim(P^{kq_0}_{kd_0}(M))\leq Ck^n$, where $C$ is independent of $k$. A simple linear algebra argument yields a contradiction to the assumption that $s_1, ..., s_{n+1}$ are algebraically independent.
\end{proof}

By a meromorphic map of $M$ to $\mathbb{P}^N$, we mean a holomorphic map into $\mathbb{P}^N$ on $M-Q$ for some subvariety $Q$ of codimension $\geq 2$. By a quasi-embedding of $M$ into $\mathbb{P}^N$, we mean a meromorphic map $F$ for which there exists a subvariety $Q$ of $M$ such that $F_{M-Q}$ is a holomorphic embedding into $\mathbb{P}^N$. 

According to proposition \ref{p52}, we can assume $R(M) = \mathbb{C}(f_1, .., f_n, g)$, where $g$ is algebraic over $\mathbb{C}(f_1, .., f_n)$ and $f_1, .., f_n\in R(M)$ are algebraically independent. We may assume $f_1, .., f_n, g$ have common denominator $s_0\in P(M)$. Say $f_i = \frac{s_i}{s_0}$, $g = \frac{u}{s_0}$. Let $Q$ be the common zero of $s_1, .., s_n, u$. Then $F = [s_0:s_1:\cdot\cdot\cdot:s_n: u]$ defines a holomorphic map from $M-Q_0$ to $\mathbb{P}^{n+1}$, where $Q_0$ is a subvariety of $Q$ of codimension at least $2$ in $M$.

\begin{prop}\label{p53}
The meromorphic map $F$: $M\to\mathbb{P}^{n+1}$ is a quasi-embedding into some irreducible hypersurface $Z$ of $\mathbb{P}^{n+1}$.\end{prop}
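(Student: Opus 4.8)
\textbf{Proof proposal for Proposition \ref{p53}.}

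The plan is to follow Mok's strategy (Section 3 of \cite{[Mo]}, specifically the analogue of his Proposition 3.3), adapting it to the present setting where the multiplicity estimate of Proposition \ref{p51} replaces Mok's Theorem 2.3. The first step is purely algebraic: since $g$ is algebraic over $\mathbb{C}(f_1,\dots,f_n)$ and $f_1,\dots,f_n$ are algebraically independent, there is an irreducible polynomial relation $P(f_1,\dots,f_n,g)=0$; clearing denominators by a suitable power of $s_0$ shows that the image $F(M\setminus Q_0)$ lies in the zero set of a single irreducible homogeneous polynomial, hence in an irreducible hypersurface $Z\subset\mathbb{P}^{n+1}$. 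Since $\dim_{\mathbb{C}} R(M)=n$ (Proposition \ref{p52}) and $f_1,\dots,f_n$ are coordinates on an open dense subset of $Z$, we get $\dim Z=n$, and $F$ is generically finite onto $Z$; because $R(M)=\mathbb{C}(f_1,\dots,f_n,g)$ already separates points and tangents on $M$ (Proposition \ref{p52}), $F$ is generically one-to-one onto $Z$.

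Next I would upgrade ``generically injective and immersive'' to ``quasi-embedding,'' i.e. find a proper subvariety $Q\supset Q_0$ of $M$ outside which $F$ is a holomorphic embedding. The key analytic input is that $F$ cannot have infinitely many points of non-injectivity or non-immersivity escaping to infinity, and more importantly that the ``bad locus'' is a genuine subvariety rather than an uncontrolled union of points. Here is where Proposition \ref{p51} enters: if $x$ is not on a compact subvariety of positive dimension, the uniform multiplicity bound $Mul_x(V)\leq C(d+q)$ for the divisors of our pluri-anticanonical sections controls the order of contact of the $s_i$ at $x$, hence bounds the ``vanishing order'' of the differential $dF$; this prevents the immersion locus from accumulating. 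For the points that \emph{do} lie on positive-dimensional compact subvarieties, by Corollary \ref{cor22} these subvarieties are all contained in a fixed ball $B(p,R_0)$, so their union is a compact analytic set and can simply be absorbed into $Q$. The argument that $F$ is one-to-one outside a subvariety then proceeds as in Mok: one adds finitely many further sections from $P(M)$ (using the $\overline\partial$-technique with a plurisubharmonic weight, as in Proposition \ref{p31}, applied to the strictly plurisubharmonic function $\log(|s_0|^2+\cdots)$ near a generic fiber) to separate the finitely many points over a generic value, and concludes that $F$ is injective away from a proper subvariety.

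I expect the main obstacle to be precisely the passage from Mok's conclusion --- where the exceptional locus for the multiplicity estimate depends on the section $s$ and could be a noncompact zero-dimensional set --- to a clean statement with a \emph{single} subvariety $Q$ of codimension $\geq 2$ working uniformly. This is exactly the improvement that Proposition \ref{p51} (via Proposition \ref{p29}) is designed to supply: the multiplicity bound holds for \emph{all} $x$ off the fixed compact union of positive-dimensional subvarieties, with a constant independent of $s$. So the crux is to assemble these uniform estimates into a statement that the locus where $F$ fails to be an embedding is contained in a proper subvariety of $M$, and to check (as in \cite{[Mo]}, page 385 ff.) that this subvariety has codimension at least two after possibly enlarging the family of sections; the remaining verifications --- irreducibility of $Z$, that $F$ is holomorphic off codimension $\geq 2$, and that the embedding is proper in the appropriate sense --- are routine given Propositions \ref{p52} and \ref{p51} and Corollary \ref{cor22}.
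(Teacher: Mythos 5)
Your first paragraph matches the route the paper intends (the paper itself gives no details, simply invoking Proposition 3.3 of \cite{[Mo]}): the minimal polynomial of $g$ over $\mathbb{C}(f_1,\dots,f_n)$, homogenized after clearing denominators by a power of $s_0$, cuts out the irreducible hypersurface $Z$, $\dim Z=n$, and $F$ is generically finite onto $Z$. The second half, however, has a genuine gap. Proposition \ref{p53} is a statement about this particular map $F=[s_0:s_1:\cdots:s_n:u]$ with exactly $n+2$ components, and your two devices do not deliver it. Adding further sections of $P(M)$ to separate points over a generic fiber proves something about an enlarged map into some $\mathbb{P}^{N}$ with $N>n+1$; it says nothing about injectivity of the original $F$, and that move belongs to Step 4 (the maps $F^i$ and Proposition \ref{p56}), not here. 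Likewise Proposition \ref{p51} is not the relevant input: a bound on $Mul_x(V)$ for divisors of sections does not convert into the claim that the non-immersive locus of $F$ ``does not accumulate,'' and no such non-accumulation is needed anyway, since the exceptional set $Q$ in a quasi-embedding is only required to be a closed analytic subset of $M$, possibly noncompact. (Your assertion that the union of all positive-dimensional compact subvarieties is a compact analytic set is also unjustified, though it is not needed at this step.)

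The missing idea, which is the content of Mok's proof that the paper adopts, is soft and uses only Proposition \ref{p52}: since $g$ generates $R(M)$ over $\mathbb{C}(f_1,\dots,f_n)$, the field $R(M)$ is exactly the pullback under $F$ of the rational function field of $Z$. Take $Q$ to be the union of $W_0=\{s_0=0\}$, the indeterminacy set $Q_0$, $F^{-1}(\mathrm{Sing}\,Z)$, and the degeneracy locus of $df_1\wedge\cdots\wedge df_n\wedge dg$; each piece is analytic, and the last is proper because $R(M)$ separates tangents (every element of $R(M)$ is rational in $f_1,\dots,f_n,g$, so $dF$ has rank $n$ generically). Off $Q$, if $F(x)=F(y)=\zeta$ with $x\neq y$, then $F$ is a local biholomorphism near $x$ and near $y$ onto neighborhoods of the smooth point $\zeta$ in the $n$-dimensional variety $Z$; composing the local inverses with any $h\in R(M)$ gives two germs at $\zeta$ which both coincide, on a dense set near $\zeta$, with the single rational function on $Z$ representing $h$, hence coincide, so $h(x)=h(y)$ for every $h$ regular at $x$ and $y$ --- contradicting the separation of points in Proposition \ref{p52}. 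This is how generic injectivity and immersivity of the fixed $F$ are upgraded to an embedding off one subvariety $Q$; no multiplicity estimate and no auxiliary sections enter at this stage.
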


The proof is the same as proposition $3.3$ of \cite{[Mo]}.

\bigskip

\emph{Step 3: Almost surjectivity of quasi-embedding}

\medskip

The rough idea is this: given any $\xi\in  Z-S-F(M-W)$ where $S$ is certain proper subvariety of $Z$, construct $g\in P(M)$ which extends to a meromorphic section on $Z$ and have pole at $\xi$. Then prove $Z-S-F(M-W)$ is given by union of finitely many divisors. The main tool is Skoda's $L^2$ estimate.

\medskip
Let us summarize some construction in section $4$ of \cite{[Mo]}.
Let $F= [s_0:s_1:\cdot\cdot\cdot:s_n:u]$ be as in \cite{[Mo]}. Recall $f_i = \frac{s_i}{s_0}$. Let $W$ be the union of zero set $W_0$ of $s_0$ and the branching locus of the holomorphic map $\rho_0(x) = (f_1(x), .., f_n(x))$ defined on $M-W_0$. Then $M-W$ is a Stein manifold and $\rho=\rho_0|_{M-W}$ realizes $M-W$ as a Riemann domain of holomorphy over $\mathbb{C}^n$. It turns out that there is a positive integer $N$ so that for any $z\in\mathbb{C}^n$, $\rho^{-1}(z)$ contains at most $N$ points. On the Riemann domain of holomorphy $\rho: M-W\to\mathbb{C}^n$, let $\delta$ denote the distance to the boundary as on page $386$ of \cite{[Mo]}.
The following proposition is the key ingredient of section $4$ of \cite{[Mo]}. Let us provide a variant proof.
\begin{prop}\label{p54}
There exists $s\in P(M)$ so that $\delta(x) \geq c_1|s|^{a}|s_0|^{b}(1+r)^{-c}$, where $a, b, c, c_1$ are some positive constants.
\end{prop}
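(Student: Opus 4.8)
The plan is to bound $\delta(x)$ from below by controlling, simultaneously, how badly $\rho_0$ fails to be a local biholomorphism and how large the complex Hessian/geometry of $M$ is. First I would recall the defining data: $\rho_0=(f_1,\dots,f_n)$ with $f_i=s_i/s_0$, so on $M\setminus W_0$ the Jacobian $J=\det(\partial f_i/\partial z_j)$ (in a local holomorphic coordinate) is a meromorphic function whose zero set is exactly the branching locus of $\rho_0$. Multiplying through by a suitable power of $s_0$ turns $s_0^mJ$ into a section of a negative power of $K$ (heuristically, $dz_1\wedge\cdots\wedge dz_n$ is a local frame for $K^{-1}$, and the $s_i$ contribute the $s_0^m$); the curvature bound $|Rm|\le C/r^2$ together with Proposition \ref{p28} shows this section has polynomial growth, so it lies in $P(M)$. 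Call it $s$. This is the candidate function in the statement.

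Next I would estimate $\delta(x)$ geometrically. Since $\rho:M\setminus W\to\mathbb{C}^n$ is a Riemann domain of holomorphy, $\delta(x)$ is comparable to the largest radius on which $\rho$ is injective with a quantitative lower bound on its derivative near $x$; by the standard inverse-function-with-estimates argument this radius is controlled below by (a power of) $|J(x)|$ divided by an upper bound on the second derivatives of $\rho$ on a fixed-size ball, which in turn is controlled by the curvature bound $|Rm|\le C/r^2$ and elliptic/gradient estimates for the harmonic-ish coordinates. Keeping careful track of the powers of $s_0$ introduced when clearing denominators in $f_i=s_i/s_0$, this yields $\delta(x)\ge c_1|s(x)|^{a}|s_0(x)|^{b}(1+r(x))^{-c}$ for explicit $a,b,c>0$; the $(1+r)^{-c}$ factor absorbs both the polynomial growth of $s,s_0$ and the $r^{-2}$ curvature decay coming through the Hessian bounds. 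I would essentially reproduce the estimates on pages 386--387 of \cite{[Mo]}, but replacing the place where Mok invokes $\int_M Ric^n<\infty$ (via his Theorem 2.2) by the growth control coming from Proposition \ref{p28} and the multiplicity estimate Proposition \ref{p51}.

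The main obstacle, and the reason this needs a "variant proof" rather than a verbatim citation, is exactly this replacement: one must verify that $s$ (the section encoding the Jacobian) genuinely has polynomial growth \emph{without} the finiteness of $\int_M Ric^n$, and that the exponents $a,b,c$ can be chosen uniformly. The subtlety is that $|s|$ can degenerate along the (possibly noncompact, though zero-dimensional) branching locus, and one needs $\delta$ to degenerate no faster than a fixed power there; this is where Proposition \ref{p51}'s uniform multiplicity bound $Mul_x(V)\le C(d+q)$ is used, to ensure the order of vanishing of $s$ along any branch component is uniformly bounded, so a single exponent $a$ works globally. I would also need the elementary facts that $M\setminus W$ is Stein and that $\rho$ has fibers of bounded cardinality $N$ (stated in the excerpt, following Mok), which let one pass from the local inverse-function estimate to the global boundary-distance function $\delta$. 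Once $s$ is produced and the Stein/Riemann-domain machinery of \cite{[Mo]} is in place, the remaining inequality is a routine chain of gradient and mean-value estimates, so I would not grind through it here.
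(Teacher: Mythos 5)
Your proposal is correct in outline and takes essentially the same route as the paper: the paper's variant proof sets $s=s_0^{2n}\,df_1\wedge\cdots\wedge df_n$, shows $s\in P(M)$ via the gradient estimate of Lemma \ref{l51} (which rests only on the bounded geometry coming from $|Rm|\le C/r^2$ and maximal volume growth, via the identity $s_0^2\,df_k=s_0\nabla s_k-s_k\nabla s_0$), and then runs a quantitative inverse-function/Cauchy-estimate argument on a ball around $x$ of radius comparable to $\min\bigl(c(1+r)^{-d}|s(x)|,\,c(1+r)^{-d}|s_0(x)|,\,r_0\bigr)$ (the ball must shrink with $|s(x)|$ and $|s_0(x)|$, not be of fixed size), on which $|\partial f_k/\partial z_i|\le C(1+r)^d/|s_0(x)|^2$ and $|\det(\partial f_k/\partial z_i)|\ge c|s(x)|/|s_0(x)|^{2n}$, giving exactly the stated bound on $\delta(x)$. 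One correction: the uniform multiplicity estimate (Proposition \ref{p29}/\ref{p51}) is neither used nor needed here, since the bound $\delta(x)\ge c_1|s(x)|^a|s_0(x)|^b(1+r)^{-c}$ is pointwise in $|s(x)|$ and simply degenerates where $s$ vanishes, so no uniform control of the vanishing order along the branching divisor (which is a divisor, not zero-dimensional) is required; also $dz_1\wedge\cdots\wedge dz_n$ is a frame for $K$ rather than $K^{-1}$, though your conclusion that a suitable power of $s_0$ times the Jacobian form is a polynomial-growth pluri-anticanonical section is the right one.
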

\begin{proof}
The following is lemma $4.3$ of \cite{[Mo]}.
\begin{lemma}\label{l51}
For any $s\in P(M)$, $||\nabla s||$ has polynomial growth.
\end{lemma}
\begin{proof}
From the assumption of $M$, it is easy to see that the injectivity radius has a lower bound $c>0$. As $|Rm|\leq C$, at each point $x\in M$, there exists a $C^{1, \alpha}$ holomorphic chart $(z_1, .., z_n)$ containing $B(x, r_0)$ for some fixed $r_0>0$. 
In particular, the Christoffel symbol is $C^\alpha$ continuous. Since $0<c I\leq g_{i\overline{j}}\leq C I$, locally we can write $s = s_x(\frac{\partial}{\partial z_1}\wedge\cdot\cdot\wedge \frac{\partial}{\partial z_n})^q$, where $s_x$ is a holomorphic function near $x$ with polynomial bound of $r(x)$. Then $||\nabla s|| = ||(\nabla s_x) (\frac{\partial}{\partial z_1}\wedge\cdot\cdot\wedge \frac{\partial}{\partial z_n})^q + s_x \nabla(\frac{\partial}{\partial z_1}\wedge\cdot\cdot\wedge \frac{\partial}{\partial z_n})^q||$ has polynomial growth.

\end{proof}
Notice $s_0^2df_k = s_0\nabla s_k-s_k\nabla s_0$. Then according to lemma \ref{l51}, \begin{equation}\label{54}|df_k|\leq \frac{C(r+1)^d}{|s_0|^2}.\end{equation} We also obtain that $s := s_0^{2n}df_1\wedge\cdot\cdot\wedge df_n\in P(M)$.  Since $||\nabla s||$ has polynomial bound, if $|s(x)|\neq 0$,
there exists $c>0, d>0$ so that on $B(x, \frac{c}{(1+r)^d}|s(x)|)$, \begin{equation}\label{55}|s|\geq \frac{1}{2}|s(x)|.\end{equation} Similarly, on $B(x, \frac{c}{(1+r)^d}|s_0(x)|)$, \begin{equation}\label{56}|s_0|\geq \frac{1}{2}|s_0(x)|.\end{equation} Let $\mu = min(\frac{c}{(1+r)^d}|s(x)|, \frac{c}{(1+r)^d}|s_0(x)|, r_0)$ (recall $r_0$ is the size of the holomorphic chart in lemma \ref{l51}).
This implies that on $B(x, \mu)$, in terms of the holomorphic chart $(z_1, .., z_n)$ in lemma \ref{l51}, \begin{equation}\label{57}|\frac{\partial f_k}{\partial z_i}|\leq \frac{C(r+1)^d}{|s_0(x)|^2}, |\det(\frac{\partial f_k}{\partial z_i})|\geq \frac{c|s(x)|}{|s_0(x)|^{2n}}.\end{equation}

Let $q_x = (f_1(x), .., f_n(x))$.
By standard Cauchy estimate and integration (see also the ODE argument in section $4$ of \cite{[Mo]}.), we conclude $(f_1, .., f_n)|_{B(x, \mu)}$ is a holomorphic chart and the image contains $B_{\mathbb{C}^n}(q_x, c_1|s|^a|s_0|^br^{-c})$ for some $a, b, c, c_1>0$. This concludes the proof of proposition \ref{p54}.

\end{proof}

One can apply Skoda estimate as in \cite{[Mo]}. With exactly the same argument, we have 
\begin{prop}\label{p55}
There exists a subvariety $T$ of $Z$ such that $F(M-W) = Z-T$.
\end{prop}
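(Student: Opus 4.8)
The statement to prove is Proposition~\ref{p55}: there exists a subvariety $T$ of $Z$ such that $F(M-W) = Z-T$. The plan is to follow Mok's argument in section~4 of \cite{[Mo]}, with Proposition~\ref{p54} supplying the lower bound on the boundary distance $\delta$ that in \cite{[Mo]} relied on the finiteness of $\int_M Ric^n$. Throughout, write $Y = F(M-W)$, which is a locally closed subset of the irreducible hypersurface $Z \subset \mathbb{P}^{n+1}$ by Proposition~\ref{p53}, and let $\overline{Y}$ be its Zariski closure; since $F$ is a quasi-embedding into $Z$ and $Z$ is irreducible of dimension $n$, we have $\overline{Y} = Z$, so $T := Z - Y$ is a constructible set and the content of the proposition is that $T$ is closed, i.e. a subvariety.

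First I would set up the meromorphic extension machinery. Points of $Z - F(M-W)$ come in two flavors: (a) points over the branch/indeterminacy locus coming from $W$ itself, which already lie in a proper subvariety because $W$ has positive codimension and $F(W \cap (M-Q_0))$ is contained in a subvariety of $Z$; and (b) genuinely ``missing'' points $\xi \in Z$ in the closure of $Y$ but not in $Y$. The strategy for (b) is the standard one: given such $\xi$, produce a holomorphic section $g \in P(M)$ whose associated meromorphic function on $M-W$, pushed forward via the finite branched map $\rho : M-W \to \mathbb{C}^n$ (trace/norm construction to descend from the $N$-sheeted cover), extends to a meromorphic section on $Z$ that is \emph{unbounded} near $\xi$. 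The existence of such $g$ is where Proposition~\ref{p54} enters: the lower bound $\delta(x) \geq c_1 |s|^a |s_0|^b (1+r)^{-c}$ controls from below the size of the polydisc in $\mathbb{C}^n$ into which a neighborhood of $x$ biholomorphically maps, so Skoda's $L^2$ division theorem can be applied on $M-W$ (or on $Z$ after descent) to solve the relevant $\overline\partial$-type/division problem with $L^2$ bounds polynomial in $r$, yielding a section in $P(M)$ rather than merely an $L^2$ section. This is essentially lemma~4.5 and the surrounding estimates of \cite{[Mo]}, and the point is that every place where Mok invokes $\int_M Ric^n < \infty$ to control weights can be replaced by Proposition~\ref{p28} ($\rho' \leq C\log(r+2)$) together with Proposition~\ref{p54}.

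Next I would run the finiteness argument. For each missing point $\xi$ one obtains, via Skoda, a meromorphic section $g_\xi$ on $Z$, holomorphic on $Y$, with a pole along a divisor $D_\xi \subset Z$ passing through $\xi$; equivalently $T$ is covered by the polar divisors $D_\xi$. One then shows these divisors live in a single linear system of bounded degree: the sections $g_\xi$, being in $P^q_d(M)$ for $q, d$ bounded independently of $\xi$ (the bounds come from Proposition~\ref{p54} and the uniform constants in Skoda's estimate), span a finite-dimensional space $V \subset P^q_d(M)$ by Corollary~\ref{cor24} ($\dim P^q_d(M) \leq C d^n$ after using Proposition~\ref{p28}). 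Hence the polar divisors all belong to $|\mathcal{O}_Z(k)|$ for a fixed $k$, and $T \subset \bigcap_{\xi} D_\xi$ forced into a bounded-degree family; since $T$ is constructible and contained in such a family of divisors, and since each irreducible component of $T$ would otherwise fail to be ``filled'' by $Y$, one concludes $T$ is a genuine (Zariski-closed) subvariety — in fact a finite union of hypersurface pieces of $Z$ together with the lower-dimensional part coming from $W$. Finally $F(M-W) = Z - T$ by construction.

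The main obstacle is the step producing the unbounded meromorphic section $g_\xi$ with the right $P(M)$-membership: one must show that the Skoda division can be carried out with the \emph{weight and size control uniform in $\xi$}, so that all the $g_\xi$ land in one fixed $P^q_d(M)$ and hence (by Corollary~\ref{cor24}) span a finite-dimensional space. In \cite{[Mo]} this uniformity ultimately rested on the global integrability $\int_M Ric^n < \infty$; here the replacement is the explicit polynomial lower bound for $\delta$ in Proposition~\ref{p54} combined with the logarithmic growth of $\rho'$ in Proposition~\ref{p28}, which together bound the auxiliary plurisubharmonic weights that enter Skoda's theorem. Verifying that these two ingredients genuinely substitute for Mok's Theorem~2.2 — i.e. checking the book-keeping of exponents $a, b, c$ so that the resulting section has \emph{polynomial} (not merely sub-exponential) growth — is the delicate part; once that is in place, the remainder of section~4 of \cite{[Mo]} applies verbatim.
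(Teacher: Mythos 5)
Your proposal takes essentially the same route as the paper: the paper's entire proof of Proposition \ref{p55} is to apply Skoda's $L^2$ estimate and run Mok's Section 4 argument ``with exactly the same argument,'' the only new input being Proposition \ref{p54} (together with Proposition \ref{p28} and Corollary \ref{cor24}) in place of the estimates that used $\int_M Ric^n<\infty$, which is precisely your plan. Your extra framing remarks (e.g.\ that $Z-F(M-W)$ is a priori constructible, which is not justified for images of non-algebraic holomorphic maps) are not needed for, and not part of, the argument the paper relies on, but the core outline you give matches the paper's intended proof.
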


\bigskip

\emph{Step 4: Completion of the proof of theorem \ref{thm1}, part III}

\medskip

 We shall add more polynomial growth sections to $F$ so that $F^i=[s_0: \cdot\cdot : s_{N_i}]: M\to\mathbb{P}^{N_i}$ is a holomorphic embedding onto its image which is quasi-projective. It suffices to solve two problems:
 
 \medskip
 
 (a) Prove that the base locus of $s_0, .., s_{N_i}$ is empty.
 
 \medskip
 
 (b) Prove that there is no branch points for $F^i$.
 
 \medskip
 
 \begin{prop}\label{p56}
There exists a quasi-embedding $F': M\to Z'\subset \mathbb{P}^N$ into a normal projective variety such that for some disjoint union $G$ of discrete points of $M$,  $F'|_{M-G}$ is a biholomorphism from $M-G$ onto a Zariski open subset of $Z'$.
\end{prop}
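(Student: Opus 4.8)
The plan is to upgrade the quasi-embedding $F$ of Proposition \ref{p53} and \ref{p55} — which so far is only defined on $M-Q_0$ and may have base locus and branch points — into a genuine quasi-embedding with the stated properties by successively adjoining finitely many polynomial growth pluri-anticanonical sections. First I would address problem (a), the base locus. The common zero locus of the current sections $s_0,\dots,s_{n+1}$ is a subvariety $B$ of $M$. By Corollary \ref{cor22} all positive-dimensional compact subvarieties lie in a fixed ball $B(p,R_0)$, so outside that ball $B$ is a disjoint union of discrete points; inside the ball $B$ is compact. For each point (or compact component) of $B$ one uses the plurisubharmonic function $v = \log(1+|s_0|^2+\cdots+|s_{n+1}|^2)$ together with a cut-off with isolated singularity (the $v^i_j$-type construction above Proposition \ref{p4}, now over $M$ itself rather than a rescaling), and solves a $\overline\partial$-equation with the weight $Cv$ plus the singular weight to produce a polynomial growth section of some $K^{-q}$ not vanishing at that point. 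Since $\dim(P^{kq_0}_{kd_0}(M))\leq Ck^n$ by Proposition \ref{p28} and Corollary \ref{cor24}, and the multiplicity/vanishing order of each section is controlled by Proposition \ref{p51}, only finitely many such sections are needed to make the base locus empty. Adjoining them to $F$ keeps the image inside a (lower-dimensional, hence again by irreducibility the same) projective variety.

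Next, for problem (b), the branch points: after the base locus is empty, $F$ is a holomorphic map $M\to\mathbb{P}^{N}$, and by Proposition \ref{p52} rational functions in $P(M)$ separate points and tangents on $M$, while by Proposition \ref{p55} (applied after the base-locus step) $F$ is already generically injective onto $Z-T$. The branch locus where $dF$ drops rank, together with the locus where $F$ fails to be injective, is a proper subvariety $W'$ of $M$; outside $B(p,R_0)$ it meets each fiber in finitely many points. Using separation of points and tangents — concretely, for each branch point choosing sections whose ratios have nonvanishing differential there, obtained again by a weighted $\overline\partial$-argument with $v$ now strictly plurisubharmonic near a generic point of the relevant fiber — and using the uniform dimension estimate $\dim(P^{kq_0}_{kd_0})\leq Ck^n$ to guarantee finiteness, one adjoins finitely many more sections so that the enlarged $F'$ is an immersion and injective away from a set $G$ which, after intersecting with the complement of $B(p,R_0)$ and using Corollary \ref{cor22}, is a disjoint union of discrete points of $M$ (the finitely many compact subvarieties being contracted). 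Finally, replacing $Z'=F'(M)$ by its normalization $\overline{Z'}$ costs only finitely many more polynomial growth sections by Proposition \ref{p2} (since $\overline{Z'}\to Z'$ is finite), and $F'$ becomes a biholomorphism from $M-G$ onto a Zariski open subset of the normal projective variety $\overline{Z'}$; projectivity of $\overline{Z'}$ follows since it is a subvariety of $\mathbb{P}^N$.

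The main obstacle, as in Mok's original argument, is bookkeeping the \emph{uniformity} of the degrees: each time one solves a $\overline\partial$-equation to kill a base point or a branch point the resulting section lives in $P^q_d$ for some $q,d$ that a priori could grow with the point, but because (i) the vanishing order is uniformly bounded by $C(d+q)$ via Proposition \ref{p51}, and (ii) $\dim(P^{kq_0}_{kd_0}(M))$ grows only polynomially of degree $n$ by Proposition \ref{p28} and Corollary \ref{cor24}, one can run a dimension-counting argument identical to the end of the proof of Proposition \ref{p52} to conclude that a bounded number of sections suffices; making this precise — in particular organizing the base-point and branch-point conditions as finitely many linear conditions on a finite-dimensional space $P^{kq_0}_{kd_0}(M)$ for fixed large $k$ — is the technical heart. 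A secondary point requiring care is that the bad loci (base locus, branch locus, non-injectivity locus) are genuinely noncompact, but since outside a fixed ball they are discrete and $F$ is already a local biholomorphism there after finitely many steps, this is handled exactly as in Proposition \ref{p29} and the discussion of Corollary \ref{cor22}.
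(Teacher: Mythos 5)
Your outline follows the same route as the paper, which at this point simply invokes Proposition $5.2$ of \cite{[Mo]} and notes that Corollary \ref{cor22} and the uniform multiplicity estimate Proposition \ref{p51} are the inputs replacing Mok's use of $\int_M Ric^n<\infty$. But what you yourself label the ``technical heart'' is precisely the content of the proposition, and it is not supplied. The dimension bound $\dim P^{kq_0}_{kd_0}(M)\le Ck^n$ does not by itself organize the base-point and branch-point conditions into finitely many linear conditions: the bad loci (base locus, branching locus, non-injectivity locus) are noncompact, so a priori there is one condition for each of infinitely many points, and polynomial growth of the dimension says nothing about how many sections are needed, nor why their degrees stay bounded. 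In Mok's Proposition $5.2$ the finiteness is extracted from the multiplicity bound $Mul_x(V)\le C(d+q)$ (here Proposition \ref{p51}) together with the confinement of positive-dimensional compact subvarieties to a fixed compact set (here Corollary \ref{cor22}); you cite both ingredients but never run the deduction, so the main step remains an assertion rather than a proof.

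There is also a concrete problem with your mechanism for producing the new sections. The weight $v=\log(1+|s_0|^2+\cdots)$ is plurisubharmonic when the $s_i$ are functions (as in Part I), but here they are sections of $K^{-q}$, and with $Ric>0$ one only has $\sqrt{-1}\partial\overline\partial\log\sum_i|s_i|^2\ge -q\,Ric$, which has the wrong sign; the only correction available in this paper is $\rho'$ with $\Delta\rho'=\pi S$ (Proposition \ref{p28}), a trace equation giving subharmonicity of $|s|^2e^{2q\rho'}$ but no $\partial\overline\partial$ lower bound. So a H\"ormander-type $\overline\partial$ argument on $M$ with weight $Cv$ is not justified, and this is exactly why the paper and \cite{[Mo]} instead run the $L^2$ (Skoda) estimates on the Stein manifold $M-W$ realized as a Riemann domain over $\mathbb{C}^n$ (Propositions \ref{p54} and \ref{p55}), with weights built from the boundary distance $\delta$ and the pulled-back coordinates. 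A smaller mismatch: Proposition \ref{p2} concerns quotients of polynomial growth holomorphic functions, not pluri-anticanonical sections, so the normalization step needs its (straightforward, but unstated) analogue using $\rho'$.
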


The argument is exactly the same as proposition $5.2$ of \cite{[Mo]}.  Notice that by corollary \ref{cor22}, any compact subvariety outside a compact set must be zero dimensional.
Also the uniform multiplicity estimate, proposition \ref{p51} plays an essential role.

\medskip

We claim that $F': M\to Z'$ is holomorphic except finite many points. Let $q\in G$ be so that $F'$ does not extend holomorphically through $q$. As remarked on page $395$ of \cite{[Mo]},
there exists $U\ni q$ so that $U-q$ is biholomorphic to $V-K$ where $V$ is an open set of $Z'$ and $K$ is a compact subvariety of $V$ of positive dimension. By a Mayer-Vietories sequence argument, we find that such $K$ gives a nontrivial contribution to $H_2(Z', \mathbb{R})$.  For different $q$ in $G$, the contributions to $H_2(Z', \mathbb{R})$ are linear independent. As $Z'$ has finite topological type, the number of such $q\in G$ must be finite.

Then we just add finitely polynomial growth sections to desingularize and separate these finitely many points. Theorem $1$, part III is concluded.
\begin{remark}
In \cite{[Mo]}, there is a section on Bezout estimate, where the extra condition $\int Ric^n<\infty$ was used. This section is unnecessary for us, due the fact that
there is no compact subvariety of positive dimension outside certain compact set (corollary \ref{cor22}).
\end{remark}

\end{document}